\documentclass{amsart}
\usepackage{tikz-cd}
\usepackage{amsmath}
\usepackage{amsfonts}
\usepackage{mathtools}
\usepackage{amsmath,amssymb,amsthm,mathrsfs}
\usepackage[utf8]{inputenc}
\usepackage{hyperref}
\usepackage{enumitem}

\theoremstyle{plain} 
\newtheorem{theorem}{Theorem}
\numberwithin{theorem}{section}
\newtheorem*{theorem*}{Theorem}
\newtheorem{prop}[theorem]{Proposition}
\newtheorem{lemma}[theorem]{Lemma}
\newtheorem{coro}[theorem]{Corollary}

\newtheorem{definition}[theorem]{Definition}
\theoremstyle{definition} 

\newtheorem*{example}{Example}
\theoremstyle{remark} 
\newtheorem{remark}[theorem]{Remark}
\theoremstyle{definition} 
\newtheorem*{ack}{Acknowledgment}


\DeclareMathOperator{\Hom}{Hom}
\newcommand{\In}{\subset}
\newcommand{\pa}{\partial}
\newcommand{\la}{\langle}
\newcommand{\ra}{\rangle}

\newcommand{\bu}{\bullet}
\newcommand{\LRA}{\Leftrightarrow}
\newcommand{\wt}{\widetilde}
\newcommand{\wb}{\overline}
\newcommand{\La}{\Lambda}
\newcommand{\RM}{\backslash}
\newcommand{\be}{\begin{equation} }
\newcommand{\ee}{\end{equation} }

\usepackage{color}


\newcommand{\bbC}{\mathbb{C}}
\newcommand{\C}{\mathbb{C}} 
\newcommand{\D}{\mathbb{D}}

\renewcommand{\P}{\mathbb{P}}
\newcommand{\Q}{\mathbb{Q}}
\newcommand{\R}{\mathbb{R}}

\newcommand{\Z}{\mathbb{Z}}

\newcommand{\cF}{\mathcal{F}}

\newcommand{\cH}{\mathcal{H}}
\newcommand{\cI}{\mathcal{I}}

\newcommand{\cK}{\mathcal{K}}
\newcommand{\cL}{\mathcal{L}}
\newcommand{\cM}{\mathcal{M}}
\newcommand{\cN}{\mathcal{N}}

\newcommand{\cV}{\mathcal{V}}

\newcommand{\bH}{\mathbf{H}}

\newcommand{\bL}{\mathbf{L}}

\newcommand{\rarr}{\longrightarrow}

\newcommand{\gr}{\textup{gr}}

\newcommand{\grf}{\gr^F_\bullet}

\newcommand{\FA}{\sA^\bullet}

\newcommand{\sL}{\mathscr{L}}

\newcommand{\sF}{\mathscr{F}}

\newcommand{\sO}{\mathscr{O}}
\newcommand{\sH}{\mathscr{H}}
\newcommand{\sA}{\mathscr{A}}
\newcommand{\sC}{\mathscr{C}}
\newcommand{\sAG}{\sA^\bullet}
\newcommand{\shTA}{\mathscr{T}}
\newcommand{\shD}{\mathscr{D}}

\newcommand{\Spec}{\textup{Spec}}
\newcommand{\Sym}{\textup{Sym}^\bullet}
\newcommand{\SiS}{\textup{SS}}
\newcommand{\DR}{\textup{DR}}
\newcommand{\MF}{\textup{MF}}
\newcommand{\DF}{\textup{DF}}
\newcommand{\KF}{\textup{KF}}
\newcommand{\Mod}{\textup{Mod}}
\newcommand{\coh}{\textup{coh}}
\newcommand{\Coker}{\textup{Coker}}
\newcommand{\Coim}{\textup{Coim }}
\newcommand{\Ker}{\textup{Ker}}
\newcommand{\an}{\textup{an}}
\newcommand{\ch}{\textup{ch}}
\newcommand{\td}{\textup{td}}

\newcommand{\Kos}{\textup{Kos}}

\newcommand{\supp}{\textup{supp}}
\newcommand{\gdeg}{\textup{gdeg}}
\newcommand{\bs}{{\bf s}}

\newcommand{\bh}{{\bf h}}

\newcommand{\bq}{{\bf q}}
\newcommand{\bff}{{\bf f}}
\newcommand{\lup}[1]{{}^#1}
\newcommand{\vb}{{\bf v}}
\newcommand{\rel}{\textup{rel}}

\begin{document}
\title{Log $\shD$-modules and index theorems}

\author{Lei Wu}
\address{Lei Wu, Department of Mathematics, University of Utah.
155 S 1400 E,  Salt Lake City, UT 84112, USA}
\email{lwu@math.utah.edu}


\author{Peng Zhou}
\address{Peng Zhou, Institut des Hautes \'Etudes Scientifiques. Le Bois-Marie, 35 route de Chartres, 91440 Bures-sur-Yvette France}
\email{pzhou.math@gmail.com}

\subjclass[2010]{14F10, 32S40, 32S60, 14C17, 14C40}

\begin{abstract}
    We study log $\shD$-modules on smooth log pairs and construct a comparison theorem of log de Rham complexes. The proof uses Sabbah's generalized $b$-functions. As applications, we deduce a log index theorem and a Riemann-Roch type formula for perverse sheaves on smooth quasi-projective varieties. The log index theorem naturally generalizes the Dubson-Kashiwara index theorem on smooth projective varieties. 
\end{abstract}

\maketitle

\section{Introduction} 
Let $(X,D)$ be a smooth log pair, that is, $X$ is a smooth variety over $\bbC$, and $D$ is a reduced normal crossing divisor. Denote the open embedding by $j\colon U=X\setminus D\hookrightarrow X$. The sheaf of log differential operators $\shD_{X, D}$ is the sub-sheaf of $\shD_X$ consisting of differential operators that preserve the defining ideal of the divisor. Log $\shD$-modules are (left or right) modules over $\shD_{X,D}$. In this article, we mainly focus on studying log $\shD$-modules associated to $\shD_U$-modules, called lattices (Definition \ref{def:lattice}), and we use them to study perverse sheaves on $U$.

\subsection{A comparison theorem for log $\shD$-modules}
Our first theorem is a comparison between a $\shD_U$-module $\cM_U$ and its lattice $\cM$ in terms of (log) \emph{de Rham} (DR) complexes (see \eqref{eq:logdrd} for its definition). 

\begin{theorem}[Log Comparison]\label{thm:logcom}
Let $\cM_U$ be a regular holonomic $\shD_U$-module. Then for every lattice $\cM$ of $\cM_U$, there exists $q_0>0$ so that for all $q> q_0$ we have natural quasi-isomorphisms
\begin{equation}\label{eq:logrh}
 \DR_{X,D}(\cM(qD))\stackrel{q.i.}{\simeq} Rj_*\DR_U(\cM_{U}).
\end{equation}
\end{theorem}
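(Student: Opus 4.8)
The plan is to construct the natural morphism in \eqref{eq:logrh} and to prove it is a quasi-isomorphism by a limiting argument over the twists $\cM(qD)$, the key input being Sabbah's generalized $b$-functions. The assertion is local on $X$ and automatic over $U$ (there the log structure is trivial and the morphism restricts to the identity), so one works in a coordinate polydisc with $D=\{x_1\cdots x_k=0\}$. Since $j$ is an affine open immersion, $Rj_*\DR_U(\cM_U)=j_*\DR_U(\cM_U)$ equals the complex $\Omega^\bu_X(*D)\otimes_{\cO_X}\cM_U(*D)$ with its meromorphic connection, where $\cM_U(*D):=j_*\cM_U$ carries its natural $\shD_X$-module structure and $\cM_U(*D)=\varinjlim_q\cM(qD)$ as sheaves of $\shD_{X,D}$-modules. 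Because each $\CTlog{X}{p}$ is locally free and the log differential commutes with filtered colimits, $\varinjlim_q\DR_{X,D}(\cM(qD))=\DR_{X,D}(\cM_U(*D))=\Omega^\bu_X(\log D)\otimes_{\cO_X}\cM_U(*D)$, and by Deligne's comparison theorem extended to regular holonomic $\shD$-modules (``log poles suffice for regular localized modules'') the inclusion $\Omega^\bu_X(\log D)\otimes\cM_U(*D)\into\Omega^\bu_X(*D)\otimes\cM_U(*D)$ is a quasi-isomorphism. The natural morphism of \eqref{eq:logrh} then factors as $\DR_{X,D}(\cM(qD))\to\varinjlim_q\DR_{X,D}(\cM(qD))\simeq Rj_*\DR_U(\cM_U)$, so it suffices to show that the transition maps $\DR_{X,D}(\cM(qD))\to\DR_{X,D}(\cM((q+1)D))$ are quasi-isomorphisms for all $q$ sufficiently large, depending on $\cM$.

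To study the transition maps, interpolate component by component:
\[ \cM(qD)\In\cM(qD+D_1)\In\cdots\In\cM(qD+D_1+\cdots+D_k)=\cM((q+1)D). \]
Since $\CTlog{X}{p}$ is flat over $\cO_X$, the functor $\DR_{X,D}$ carries short exact sequences of $\shD_{X,D}$-modules to distinguished triangles, so it is enough to prove, for each $i$ and all $q\gg0$, that $\DR_{X,D}(\cG)\simeq 0$, where $\cG:=\cM(E+D_i)/\cM(E)$ with $E=qD+D_1+\cdots+D_{i-1}$. The module $\cG$ is annihilated by $x_i$, hence supported on $D_i$. In the local frame $\tfrac{dx_1}{x_1},\dots,\tfrac{dx_k}{x_k},dx_{k+1},\dots,dx_n$ of $\CTlog{X}{1}$ the log connection on $\cG$ is $\nabla=\sum_{j\le k}\tfrac{dx_j}{x_j}\otimes\theta_j+\sum_{j>k}dx_j\otimes\partial_j$ with $\theta_j=x_j\partial_j$, and these operators pairwise commute on $\cG$; thus $\DR_{X,D}(\cG)$ is the Koszul complex $\Kos(\cG;\theta_1,\dots,\theta_k,\partial_{k+1},\dots,\partial_n)$ up to shift. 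A Koszul complex on commuting endomorphisms one of which is invertible is acyclic, so it remains to prove that $\theta_i$ acts invertibly on $\cG$ once $q$ is large.

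This invertibility is the crux, and it is where Sabbah's $b$-functions for log $\shD$-modules enter. Because $\cM$ is coherent over $\shD_{X,D}$, Sabbah's theory provides, locally near a point of $D_i$, a nonzero $b_i(s)\in\bbC[s]$ such that on each graded piece $\cM(mD_i)/\cM((m-1)D_i)$ the operator $\theta_i$ is annihilated by a power of $b_i(s+m)$; moreover only the $D_i$-coefficient of a twist is relevant, since the line bundles $\cO_X(D_j)$ for $j\ne i$ commute with $\theta_i$ and do not affect its eigenvalues. Writing $\cG\cong x_i^{-(q+1)}\bigl(\cM(E')/x_i\cM(E')\bigr)$ with $E'=q\sum_{j\ne i}D_j+\sum_{j<i}D_j$, we find that $\theta_i$ on $\cG$ satisfies a polynomial identity with constant term a nonzero scalar multiple of $b_i(q+1)^N$; and $b_i(q+1)\ne 0$ as soon as $q+1$ exceeds every real part of a root of $b_i$. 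Hence $\theta_i$ is invertible on $\cG$ — its inverse being a polynomial in $\theta_i$ — for all $q>q_0$, where $q_0$ is the maximum of these thresholds over the finitely many components of $D$; the thresholds are uniform on the chart, and finitely many charts cover a neighborhood of $D$. This yields $\DR_{X,D}(\cG)\simeq 0$ and completes the proof.

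I expect the two main obstacles to be the following. First, making precise the statement of Sabbah's generalized $b$-function in the log setting and its compatibility with the operations $\cM\rightsquigarrow\cM(D_j)$ and with restriction to a component — that is, identifying the $\theta_i$-action on the graded pieces of the pole-order filtration with the roots of $b_i$ up to integer shift, and checking coherence of $\cM$ over $\shD_{X,D}$ suffices for its existence. Second, arranging the identification $\DR_{X,D}(\cM_U(*D))\simeq Rj_*\DR_U(\cM_U)$ so that it is genuinely compatible with the filtered colimit $\varinjlim_q\cM(qD)=\cM_U(*D)$; this is where the hypothesis that $\cM_U$ is regular holonomic is essential, via Deligne's comparison theorem.
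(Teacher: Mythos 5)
Your overall framework matches the paper's: reduce to showing the transition maps $\DR_{X,D}(\cM(qD))\to\DR_{X,D}(\cM((q+1)D))$ are quasi-isomorphisms for $q\gg 0$, pass to the filtered colimit, and invoke $\DR(j_*\cM_U)\simeq Rj_*\DR(\cM_U)$ for $\cM_U$ regular holonomic. The Koszul reformulation of $\DR_{X,D}$ on a quotient supported along $D$ is also the same. But the middle step diverges, and that divergence opens a genuine gap.

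You interpolate one component of $D$ at a time and reduce to proving that $\theta_i=x_i\partial_{x_i}$ acts invertibly on $\cG=\cM(E+D_i)/\cM(E)$, claiming this follows from a \emph{single-variable} polynomial $b_i(s)$ such that $b_i(\theta_i+m)^N$ annihilates $\cM(mD_i)/\cM((m-1)D_i)$, i.e.\ $b_i(\theta_i)^N\cM\subseteq\cM(-D_i)$. But Sabbah's theory, which is what the paper packages as Theorem \ref{thm:bflattice}, does not supply such a $b_i$. It supplies a \emph{multivariate} $b_\cM(s_1,\dots,s_k)$ satisfying $b_\cM(\theta_1,\dots,\theta_k)\cM\subseteq\cM(-D)$ — the target is the twist by the full $-D$, not by $-D_i$ — and the factors of $b_\cM$ are linear forms $c+\sum_j\alpha_j s_j$ with $\alpha_j\in\Q_{\geq0}$ which may have $\alpha_i=0$, e.g.\ a factor $c+s_j$ for $j\ne i$ arising from a ray $L=e_j$ of Sabbah's adapted fan. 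The vanishing locus of such a factor is a hyperplane $\{s_j=-c\}$ unaffected by shifting $s_i$, so twisting by $D_i$ alone has no reason to push the $\bbC[\bs]$-support of the quotient off the origin, let alone make $\theta_i$ invertible on it. The one ray $L=e_i$ does carry a one-variable $b_{e_i}$, but it controls the \emph{saturated} filtration ${}^{e_i}U_\lambda=\sum_{\bs':s'_i\le\lambda}U_{\bs'}$, which is generically strictly larger than $\cM(\lambda D_i)$; you cannot transport that $b$-function to the pole filtration without further argument, and the whole point of Sabbah's construction is that the naive single-component filtrations and the saturated ones differ.

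The paper avoids this by twisting along the full divisor $qD$: the diagonal direction $(1,\dots,1)\in N$ pairs \emph{strictly positively} with every primitive $L$ in the positive quadrant, so for $|q|\gg 0$ all the zero-hyperplanes of all the factors of $b_\cM$ are pushed off the origin simultaneously. It then concludes by the weaker and genuinely multi-variable statement that $\Kos\bigl((\cM(qD)/\cM((q-1)D))_x;\theta_1,\dots,\theta_k\bigr)\cong(\cM(qD)/\cM((q-1)D))_x\otimes_{\bbC[\bs]}\Kos(\bbC[\bs];\bs)$ is acyclic because the supports are disjoint — no single $\theta_i$ needs to be invertible. If you want to repair your version, you must either prove directly that the pole filtration along a single component admits a one-variable $b$-function with the stated behavior (which is not obviously true and not what Sabbah proves), or abandon the component-by-component interpolation and move along the diagonal as the paper does.
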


Suppose $D_i$, $i=1, \cdots, k$, are irreducible components of $D$. Then we may consider the following lattices
\[ \cM( a_1 D_1 + \cdots + a_k D_k), \quad a_i \in \Z \]
and the corresponding log DR complexes, 
\begin{equation}\label{eq:question}
    \DR_{X, D}(\cM(a_1 D_1 + \cdots + a_k D_k)). 
\end{equation}
It is interesting to ask: how will the above log DR complex change as we vary the shift parameter $a_i$? In this perspective, our log comparison theorem only says  the log de Rham complex stabilizes if all the $a_i$ are large enough.

\subsubsection{Application to Deligne Lattices}
We compute \eqref{eq:question} in the case of flat connections with regular singularities along boundaries. Assume that $\overline\cV$ is the Deligne lattice of a local system $L$ on $U$ satisfying that real parts of eigenvalues of the residues along $D$ are in $(-1,0]$; see \S \ref{sec:dl} for details.
For simplicity, we write $D=\sum_{l=1}^kD_l$,  
\[D^{I}=\sum_{l\in I}D_l \quad \textup{ and } \quad D^{\bar I}=\sum_{l\notin I}D_l\]
for a subset $I\subseteq \{1,2,\dots,k\}$ and 
$$j^I_1: U\to X\setminus D^I \textup{ and }j^I_2: X\setminus D^I\to X.$$
In this case, Theorem \ref{thm:logcom} can be refined as follows. 
\begin{theorem}\label{thm:DLlc}
For every pair of integers $k_1,k_2>0$, we have quasi-isomorphisms
\[\DR_{X,D}(\overline{\cV}(k_2D^{\bar I}-k_1D^{I}))\stackrel{q.i.}{\simeq}\DR_{X,D}(\overline{\cV}(-D^{I}))\stackrel{q.i.}{\simeq}j^I_{2!}Rj^I_{1*}L[n].\]
In particular, 
 \[\DR_{X,D}(\sO_X(k_2D^{\bar I}-k_1D^{I}))\stackrel{q.i.}{\simeq}\DR_{X,D}(\sO_X(-D^{I}))\stackrel{q.i.}{\simeq}j^I_{2!}Rj^I_{1*}\bbC_U[n].\]
\end{theorem}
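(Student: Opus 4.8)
The plan is to reduce both quasi-isomorphisms to a single explicit local computation of the log de Rham complex of the twisted Deligne lattices near $D$, and then to feed that into the Log Comparison Theorem \ref{thm:logcom}.

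\emph{Step 1: a local model near $D$.} Fix $p\in D$ and local coordinates $(z_1,\dots,z_n)$ with $D=\{z_1\cdots z_m=0\}$, so $D_1,\dots,D_m$ are the components of $D$ through $p$. The residues $R_l=\Res_{D_l}\overline{\cV}$ pairwise commute (so do the local monodromies), and since their eigenvalues have real part in $(-1,0]$ no two eigenvalues of a single $R_l$ differ by a nonzero integer; consequently $\overline{\cV}$ has, in a suitable frame near $p$ (the one used to build the canonical extension), connection $\nabla=\sum_{l=1}^m R_l\,\tfrac{dz_l}{z_l}$ with the $R_l$ \emph{constant}. In the induced frame for $\overline{\cV}(\sum_l a_lD_l)$ the connection becomes $\sum_l(R_l-a_l\,\id)\tfrac{dz_l}{z_l}$, and its log de Rham complex is the associated Koszul complex. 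Filtering by order of pole along each $z_l$, the graded piece of the $z_l$-direction complex on which the operator is $k\,\id+R_l$ is acyclic for every $k\neq 0$ (its eigenvalues then have nonzero real part), so only the $k=a_l$ piece can survive. Hence, writing $\overline{\cV}|_p$ for the fibre,
\[
\DR_{X,D}\bigl(\overline{\cV}(\textstyle\sum_l a_lD_l)\bigr)_p \;\simeq\;
\begin{cases}
0 & \text{if } a_l\le -1 \text{ for some } l \text{ with } p\in D_l,\\
\Kos\bigl(\overline{\cV}|_p;\{R_l\}_{l:\,p\in D_l}\bigr)[\ast] & \text{if } a_l\ge 0 \text{ for all } l \text{ with } p\in D_l,
\end{cases}
\]
the Koszul complex of the commuting operators $R_l$ on the fibre, shifted by the transverse dimension. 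I expect the real obstacle to lie here: establishing this at the deep strata of $D$, where all the $R_l$ must be handled at once, is where the commutativity of the residues of a Deligne lattice and the non-resonance forced by the window $(-1,0]$ are essential.

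\emph{Step 2: the first quasi-isomorphism.} For $k_1\ge 1$ there are natural inclusions of lattices $\overline{\cV}(-k_1D^I)\into\overline{\cV}(-D^I)$ and $\overline{\cV}(-k_1D^I)\into\overline{\cV}(k_2D^{\bar I}-k_1D^I)$, and all three lattices have $a_l\le -1$ along $D^I$ and $a_l\ge 0$ along $D^{\bar I}$. By Step 1 the three log de Rham complexes have the same cohomology stalks at every point (zero on $D^I$, the common Koszul stalk off $D^I$), and both inclusions induce isomorphisms on these stalks — trivially at points of $X\setminus D^I$ (where the $D^I$-twists are trivial) and at points of $D^I$ (where the stalks vanish), by matching the surviving graded pieces. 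Hence both induce quasi-isomorphisms on $\DR_{X,D}$, and the resulting zig-zag gives $\DR_{X,D}(\overline{\cV}(k_2D^{\bar I}-k_1D^I))\simeq\DR_{X,D}(\overline{\cV}(-D^I))$.

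\emph{Step 3: the second quasi-isomorphism and the $\sO_X$ case.} Let $V=X\setminus D^I$, so $(V,D^{\bar I}\cap V)$ is a smooth log pair with open part $U$, and $j^I_2\colon V\into X$, $j^I_1\colon U\into V$. Since $-D^I$ is trivial on $V$, $\overline{\cV}(-D^I)|_V=\overline{\cV}|_V$ is the Deligne lattice of $L$ for $(V,D^{\bar I}\cap V)$ (residues in $(-1,0]$) and $\DR_{X,D}(\overline{\cV}(-D^I))|_V=\DR_{V,D^{\bar I}\cap V}(\overline{\cV}|_V)$. Applying Theorem \ref{thm:logcom} on $V$ to $\overline{\cV}|_V$ and then the stabilization from Step 1 (twisting up along $D^{\bar I}\cap V$ keeps all $a_l\ge 0$, hence does not change the stalks) gives $\DR_{V,D^{\bar I}\cap V}(\overline{\cV}|_V)\simeq Rj^I_{1*}\DR_U(\overline{\cV}|_U)\simeq Rj^I_{1*}L[n]$. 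On the other hand Step 1 shows $\DR_{X,D}(\overline{\cV}(-D^I))$ has vanishing cohomology stalks along $D^I$ (there some $a_l=-1$); the cone of the adjunction morphism $j^I_{2!}\bigl(\DR_{X,D}(\overline{\cV}(-D^I))|_V\bigr)\to\DR_{X,D}(\overline{\cV}(-D^I))$ then vanishes both on $V$ and on $D^I$, so the morphism is an isomorphism and $\DR_{X,D}(\overline{\cV}(-D^I))\simeq j^I_{2!}Rj^I_{1*}L[n]$. Finally, for $L=\bbC_U$ the Deligne lattice with residues in $(-1,0]$ is $\overline{\cV}=\sO_X$ with its trivial log connection (all residues $0$) and $\DR_U(\sO_U)\simeq\bbC_U[n]$, so specializing everything above yields the displayed quasi-isomorphisms for $\sO_X$.
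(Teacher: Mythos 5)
Your proof is correct and takes essentially the same route as the paper's: both compute the stalks of the log de Rham complex directly from the canonical trivialization $e_i=\exp(\sum_l\Gamma_l\log x_l)\cdot c_i$ of the Deligne lattice and the $(-1,0]$-window on residue eigenvalues (this is what the paper packages as the $b$-function formula \eqref{eq:bflattices}), then use Theorem \ref{thm:logcom} away from $D^I$ to identify the complex with $Rj^I_{1*}L[n]$, and finally use the vanishing of the stalks along $D^I$ to bring in the extension-by-zero $j^I_{2!}$. Two small slips that do not affect the argument: in Step 1, with your indexing the surviving graded piece is $k=0$ (equivalently absolute $z_l$-order $j=a_l$), not $k=a_l$; and in Step 2, at points of $D^{\bar I}\setminus D^I$ the second inclusion is not literally the identity because of the $D^{\bar I}$-twist, though your graded-piece matching does handle it.
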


\subsection{Application to $\shD_Y[\bs](\bh^{\bs+\vb}\cdot \cM_0)$}
Suppose that $\bh=(h_1,\dots,h_k)$ is a $k$-tuple of regular functions on a smooth variety $Y$ of dimension $m$. 
Let $U=Y\setminus (\prod_lh_l=0)$, and  $j: U\hookrightarrow Y$ \footnote{Here we deviate from the log pair setup and notation. Later the log pair $(X, D)$ shows up from graph embedding.}.  For a regular holonomic $\shD_{U}$-module $\cM_U$, we consider the $\shD_Y[\bs]$-module for $\vb=(v_1,\dots,v_l)\in \Z^k$:
\[\cM^\vb_\bh\coloneqq\shD_Y[\bs](\bh^{\bs+\vb}\cdot \cM_0)\subseteq j_*(\bh^{\bs}\cdot\cM_U[\bs])\]
where $\bh^{\bs+\vb}=\prod_lh_l^{s_l+v_l}$ and $\bs=(s_1,\dots,s_k)$ are independent variables and $\cM_0$ is a coherent $\sO_X$-submodule of $j_*\cM_U$ that generates $j_*\cM_U$ over $\shD_Y$.

Following Ginsburg's ideas and using Sabbah's generalized Bernstein-Sato polynomials, we obtain the following generalization of the Beilinson Theorem \cite[Proposition 3.6]{Gil} (see also \cite{BG}):
\begin{theorem}\label{thm:introex1}
For $\vb=(v_1,v_2,\dots,v_k)\in \Z^k$ with $v_l\gg 0$ for every $l$, we have  
\begin{enumerate}[label=\textup{(\roman*)}]
    \item $j_* \cM_U = \shD_Y[\bs](\bh^{\bs-\vb}\cdot \cM_0) / (s_1, \cdots, s_n) \shD_Y[\bs](\bh^{\bs-\vb}\cdot \cM_0) $;\label{item:intro111}
    \item  $j_! \cM_U = \shD_Y[\bs](\bh^{\bs+\vb}\cdot \cM_0) / (s_1, \cdots, s_n) \shD_Y[\bs](\bh^{\bs+\vb}\cdot \cM_0) $.\label{item:intro222}
\end{enumerate}
\end{theorem}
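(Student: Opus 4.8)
The plan is to follow Ginsburg's method. Write $\sM_\bh\coloneqq j_*(\bh^\bs\cdot\cM_U[\bs])$ and, for $\vec w\in\Z^k$, set $\cM^{\vec w}_\bh\coloneqq\shD_Y[\bs](\bh^{\bs+\vec w}\cdot\cM_0)\subseteq\sM_\bh$, so that $\cM^\vb_\bh$ is the module appearing in the statement and $(\bs)$ denotes the ideal $(s_1,\dots,s_k)$. First I would set up the bookkeeping. As $j$ is an affine open immersion, $\sM_\bh\cong(j_*\cM_U)\otimes_\bbC\bbC[\bs]$ as a $\bbC[\bs]$-module; hence $\sM_\bh$ is $\bbC[\bs]$-flat, each $\cM^{\vec w}_\bh$ is $\bs$-torsion free, and $\sM_\bh/(\bs)\sM_\bh\cong j_*\cM_U$ as $\shD_Y$-modules (the $\bs$-linear twist of the connection dies modulo $(\bs)$). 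Since $h_l\in\sO_Y$ one has $\cM^{\vec w+e_l}_\bh\subseteq\cM^{\vec w}_\bh$, and since $\cM_0$ generates $j_*\cM_U$ over $\shD_Y$, clearing denominators gives $\sM_\bh=\bigcup_{\vec w}\cM^{\vec w}_\bh$, an increasing union as the coordinates of $\vec w$ go to $-\infty$.

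For \ref{item:intro111} I would take $\vec w=-\vb$ and show that the inclusion $\cM^{-\vb}_\bh\into\sM_\bh$ is an isomorphism after applying $-\otimes_{\bbC[\bs]}\bbC$; since $\cM^{-\vb}_\bh\otimes_{\bbC[\bs]}\bbC=\cM^{-\vb}_\bh/(\bs)\cM^{-\vb}_\bh$, and similarly for $\sM_\bh$, this gives $\cM^{-\vb}_\bh/(\bs)\cM^{-\vb}_\bh\cong j_*\cM_U$, which is \ref{item:intro111}. By the $\mathrm{Tor}$ sequence it is enough that the $\bbC[\bs]$-module $\sM_\bh/\cM^{-\vb}_\bh$ have vanishing stalk at $\bs=0$. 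Here Sabbah's theory enters: for each $l$ there is a nonzero $b_l(\bs)\in\bbC[\bs]$ with $b_l(\bs)\cdot(\bh^\bs\cM_0)\subseteq\shD_Y[\bs](h_l\bh^\bs\cM_0)$, and — the essential point, which is part of Sabbah's structure result for the Bernstein--Sato ideal — $b_l$ can be taken to be a product $\prod_i\ell_i(\bs)$ of affine-linear forms with non-negative rational slopes. Shifting the functional equation by $\vec w$ yields $b_l(\bs+\vec w)\cdot\cM^{\vec w}_\bh\subseteq\cM^{\vec w+e_l}_\bh$, so every one-step quotient $\cM^{\vec u}_\bh/\cM^{\vec u+e_l}_\bh$ is killed by $b_l(\bs+\vec u)$. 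Filtering $\cM^{\vec w}_\bh/\cM^{-\vb}_\bh$ (for $\vec w\le-\vb$) by raising one coordinate at a time up to $-v_l$, all intermediate subquotients are of this type with $\vec u\le-\vb$; by non-negativity of the slopes, $b_l(\vec u)\ne0$ as soon as every $v_l$ is large enough, so each subquotient — hence $\cM^{\vec w}_\bh/\cM^{-\vb}_\bh$, hence the colimit $\sM_\bh/\cM^{-\vb}_\bh$ — is $\bbC[\bs]$-supported away from $\bs=0$.

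For \ref{item:intro222} I would deduce the statement from \ref{item:intro111} by holonomic duality. Put $\cM_U^\vee\coloneqq\D_U\cM_U$ and choose a lattice $\cM_0^\vee$ of $\cM_U^\vee$ with $\D_{\shD_Y[\bs]}\big(\shD_Y[\bs](\bh^{-\bs}\cM_0)\big)\cong\shD_Y[\bs](\bh^{\bs}\cM_0^\vee)$ (the $\bs$-twist dualises to the $(-\bs)$-twist, and the $\shD_Y[\bs]$-dual of a lattice is again a lattice). By \ref{item:intro111} applied to $(\cM_U^\vee,\cM_0^\vee)$ we get $j_*\cM_U^\vee=\shD_Y[\bs](\bh^{\bs-\vb}\cM_0^\vee)/(\bs)$ for $v_l\gg0$. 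Since $j$ is an affine open immersion, $j_!\cM_U=\D_Y j_*\D_U\cM_U=\D_Y(j_*\cM_U^\vee)$; and since $(\bs)$ is a regular sequence on $\shD_Y[\bs](\bh^{\bs-\vb}\cM_0^\vee)$ and on its dual, $\shD_Y[\bs]$-duality commutes with $-\otimes^{\mathbb L}_{\bbC[\bs]}\bbC$ up to the shift by $k$, so $\D_Y\big(\shD_Y[\bs](\bh^{\bs-\vb}\cM_0^\vee)/(\bs)\big)\cong\D_{\shD_Y[\bs]}\big(\shD_Y[\bs](\bh^{\bs-\vb}\cM_0^\vee)\big)/(\bs)$. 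Using $\bh^{\bs-\vb}=\bh^{-\vb}\cdot\bh^{\bs}$ and the chosen identification, the right-hand side is $\shD_Y[\bs](\bh^{-\bs+\vb}\cM_0)/(\bs)$; applying the automorphism $s_l\mapsto-s_l$ of $\shD_Y[\bs]$ (which preserves $(\bs)$ and the $\shD_Y$-structure) identifies this with $\shD_Y[\bs](\bh^{\bs+\vb}\cM_0)/(\bs)$, which is \ref{item:intro222}.

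I expect the main obstacle to be the input from Sabbah's theory in step \ref{item:intro111}: beyond the existence of the functional equations one needs control of the zero loci of the $b_l(\bs)$ — that these are finite unions of affine hyperplanes with non-negative rational normal directions — so that the orthant $\{v_l\gg0\text{ for every }l\}$ is actually disjoint from them; this is the same technical heart that drives the proof of Theorem~\ref{thm:logcom}. A secondary point needing care is the compatibility of $\shD_Y[\bs]$-duality with reduction modulo $(\bs)$ in step \ref{item:intro222}, which rests on the relative holonomicity and Cohen--Macaulayness (in the $\bs$-direction) of the lattices $\shD_Y[\bs](\bh^\bs\cM_0)$. Conceptually, the graph embedding $Y\into Y\times\A^k$ realises each $\cM^{\vec w}_\bh$ as a lattice on the smooth log pair $(Y\times\A^k,\sum_l\{t_l=0\})$, which is why this statement sits next to the log comparison theorem; but the $b$-function estimate above seems unavoidable in any approach.
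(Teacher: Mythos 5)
Your proposal for part~\ref{item:intro111} is essentially the paper's argument (which goes through Theorem~\ref{thm:bblocal}\ref{item:bblocal} and Corollary~\ref{cor:mainc}): use Sabbah's generalized $b$-function, observe that its factors are affine-linear with non-negative rational slopes so that the support of the cokernel of $\cM^{-\vb}_\bh \hookrightarrow j_*(\bh^{\bs}\cdot\cM_U[\bs])$ misses $\bs=\vec 0$ for $v_l\gg0$, then use $\bbC[\bs]$-flatness of $j_*(\bh^{\bs}\cdot\cM_U[\bs])$ to conclude. The one cosmetic difference is that the paper's $b$-function shifts the lattice by the diagonal vector $\vec 1$ while you filter coordinate-by-coordinate, but this is immaterial.

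For part~\ref{item:intro222} the strategy (reduce to~\ref{item:intro111} by duality) is also what the paper does, but there is a genuine gap. Your argument hinges on the assertion that one can \emph{choose} $\cM_0^\vee$ so that
\[
\D_{\shD_Y[\bs]}\bigl(\shD_Y[\bs](\bh^{-\bs}\cM_0)\bigr)\;\cong\;\shD_Y[\bs](\bh^{\bs}\cM_0^\vee),
\]
i.e.\ that the $\shD_Y[\bs]$-dual of the specific lattice is again a lattice of exactly this form, with a coherent $\sO_Y$-generator twisted by $\bh^\bs$. This is not established (it is not even clear a priori that the dual complex is concentrated in a single degree, let alone that it has a generator of this shape), and the remaining reduction with the automorphism $s_l\mapsto -s_l$ makes the claim do quite a lot of work. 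The same issue hides behind your phrase ``$(\bs)$ is a regular sequence on $\shD_Y[\bs](\bh^{\bs-\vb}\cM_0^\vee)$ and on its dual'': for the dual, one needs to know that $\mathcal{E}xt^l_{\shD_{Y}[\bs]}(\cM^{\vec w}_\bh,\shD_Y[\bs])$ vanishes for $l\neq m$ after localizing at $m_{\vec 0}$, and this is precisely the content of Theorem~\ref{thm:dualvan}, proved via Maisonobe's theory of relative characteristic varieties and major\'e cycles. You flag this in your last paragraph as ``relative holonomicity and Cohen--Macaulayness,'' but it is not a secondary point --- it is the technical heart of~\ref{item:intro222}.

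The paper circumvents the dual-lattice identification entirely. Rather than explicitly computing the dual of a lattice, Theorem~\ref{thm:bblocal} first works over the generic point $\bbC(\bs)$, where the $b$-function is invertible, to show $\cM^{\vb}_\bh(\bs)=j_!=j_*=j_{!*}$ of the twisted generic module for every $\vb$. Then localizing at $m_{\vec 0}$ and using Theorem~\ref{thm:dualvan} (so that $j_!$ is a single module and is $n$-pure), it shows that the natural map $j_!(\bh^\bs\cM_U[\bs]_{m_{\vec 0}})\to j_*(\bh^\bs\cM_U[\bs]_{m_{\vec 0}})$ is injective, hence $j_!=j_{!*}$, and finally identifies $\cM^{\vb}_\bh[\bs]_{m_{\vec 0}}$ with this minimal extension. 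Only after that does the compatibility of $\D$ with $\bbC\stackrel{\bL}{\otimes}_{\bbC[\bs]}\bullet$ finish the argument. To make your version of~\ref{item:intro222} watertight you would have to either reproduce this purity/minimal-extension analysis, or prove the dual-lattice claim directly --- and the latter seems to require exactly the same inputs. Note also the paper uses the convention $\D(\bh^\bs\cdot\cM_U[\bs])\simeq\bh^\bs\cdot\D(\cM_U)[\bs]$ (same sign on $\bs$), whereas you use $\bh^{-\bs}$ and then correct with $s_l\mapsto -s_l$; this is a coherent alternative convention, but it is worth stating carefully since a sign mismatch at this step would invalidate the final identification.
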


There is also a similar but different result where we set $\bs=0$ from the beginning, instead of taking the fiber at $\bs=0$ as above. 
\begin{theorem}\label{thm:introex2}
For $\vb=(v_1,v_2,\dots,v_k)\in \Z^k$ with $v_l\gg 0$ for every $l$, we have  
\begin{enumerate}[label=\textup{(\roman*)}]
    \item $j_* \cM_U = \shD_Y(\bh^{-\vb}\cdot \cM_0)  $;\label{jstar}
    \item  $j_{!*} \cM_U = \shD_Y(\bh^{\vb}\cdot \cM_0)   $.\label{jshstar}
\end{enumerate}
\end{theorem}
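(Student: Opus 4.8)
The plan is to reduce Theorem \ref{thm:introex2} to Theorem \ref{thm:introex1} (equivalently, to the Log Comparison Theorem) via the graph embedding along $\bh$. Concretely, let $\iota\colon Y\into X=Y\times\A^k$ be the graph embedding $y\mapsto(y,h_1(y),\dots,h_k(y))$, and let $t_1,\dots,t_k$ be the coordinates on $\A^k$, so that $D=\bigcup_l\{t_l=0\}$ is a normal crossing divisor on $X$ and $(X,D)$ is a smooth log pair. Pushing forward $\cM_U$ along the open analogue of $\iota$ produces a $\shD$-module on $X\setminus D$ whose Deligne-type lattices, twisted by $\sum_l a_lD_l$, correspond under $\iota_+$ to the $\shD_Y[\bs]$-modules $\cM^\vb_\bh$ after specializing the $b_l=-\pa_{t_l}t_l$ operators to the variables $s_l$; this is the standard dictionary (Malgrange, Sabbah) between $\shD_Y[\bs]$-modules built from $\bh^\bs$ and $\shD_X$-modules supported near the graph, and it is exactly the mechanism already used to prove Theorem \ref{thm:introex1}.

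First I would make precise the relationship between "taking the fiber at $\bs=0$" and "setting $\bs=0$ from the start." Write $N^\vb=\shD_Y[\bs](\bh^{\bs+\vb}\cdot\cM_0)$. Theorem \ref{thm:introex1} identifies $j_*\cM_U$ and $j_!\cM_U$ with $N^{-\vb}/\bs N^{-\vb}$ and $N^{\vb}/\bs N^{\vb}$ respectively for $v_l\gg0$. On the other hand $\shD_Y(\bh^{\vb}\cdot\cM_0)$ is the image of $N^\vb$ under the map $\bs\mapsto0$, i.e. the $\shD_Y$-submodule of $j_*\cM_U$ generated by $\bh^\vb\cM_0$ (here $\bh^\vb$ with $v_l\in\Z$ makes sense as a section of $j_*\cM_U$, and for $v_l\gg 0$ it lands inside $\cM_0$-translates inside $j_*\cM_U$). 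So part \ref{jstar}: for $v_l\gg0$, $\bh^{-\vb}\cM_0$ already generates all of $j_*\cM_U$ over $\shD_Y$ — this should follow from Theorem \ref{thm:introex1}\ref{item:intro111} by observing that the natural surjection $N^{-\vb}\twoheadrightarrow j_*\cM_U$ obtained by $\bs\mapsto0$ factors through $N^{-\vb}/\bs N^{-\vb}$ and is an isomorphism there, hence the image of $\bh^{-\vb}\cM_0$ generates. Dually, part \ref{jshstar}: the $\shD_Y$-module generated by $\bh^\vb\cM_0$ inside $j_*\cM_U$ should be shown to have no sub- or quotient-module supported on $\{\prod h_l=0\}$, i.e. equals the intermediate extension $j_{!*}\cM_U$. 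The input here is Theorem \ref{thm:introex1}\ref{item:intro222}: $N^\vb/\bs N^\vb\cong j_!\cM_U$, and $\shD_Y(\bh^\vb\cM_0)$ is a quotient of this (it is the image of $N^\vb/\bs N^\vb$ in $j_*\cM_U$ under the canonical map $j_!\cM_U\to j_*\cM_U$), so it is a quotient of $j_!\cM_U$ that is also a submodule of $j_*\cM_U$; the only such object is $j_{!*}\cM_U$, by the standard universal property of the intermediate extension (every such "middle" module factoring $j_!\cM_U\to j_*\cM_U$ through a sub of the target is $j_{!*}$, once one checks the relevant maps are the canonical ones).

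The step I expect to be the main obstacle is verifying that for $v_l\gg0$ the module $\shD_Y(\bh^{\pm\vb}\cM_0)$ is genuinely the image of $N^{\pm\vb}/\bs N^{\pm\vb}$ under the canonical map, rather than some a priori different specialization — i.e. that specializing $\bs\to0$ commutes with the operations (generation, sub/quotient) in the way needed. This requires a $b$-function argument: using Sabbah's generalized Bernstein-Sato polynomials for $\bh$ acting on $\cM_0$, one shows that for $v_l$ large enough relative to the roots of the relevant multivariable $b$-function, the operators $s_l$ act "invertibly modulo the lattice" in the appropriate sense, so that the $\bs$-adic behaviour of $N^{\pm\vb}$ is as simple as possible. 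Granting that, the passage to $\bs=0$ is flat-enough along the needed directions and the identifications go through. The remaining bookkeeping — that $\bh^{\pm\vb}\cM_0$ lands in $j_*\cM_U$ and that the generated $\shD_Y$-module is coherent holonomic — is routine given regularity and holonomicity of $\cM_U$, so I would state it and move on.
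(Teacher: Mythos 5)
Your proposal is essentially correct, but it takes a genuinely different logical route than the paper. The paper does not deduce Theorem \ref{thm:introex2} from Theorem \ref{thm:introex1}; both are established simultaneously in Corollary \ref{cor:mainc} from Theorem \ref{thm:bblocal}, and for Theorem \ref{thm:introex2} the paper runs the $b$-function argument of equation \eqref{eq:const1} once more at $\bs=0$ (using that $b_{\pm\vb}(\vec 0)\neq 0$ for $v_l\gg 0$) together with a nullstellensatz step to handle the $j_{!*}$ case as in \eqref{eq:const2}. Your approach instead treats Theorem \ref{thm:introex1} as a black box and observes that $\shD_Y(\bh^{\pm\vb}\cM_0)$ is the image of $N^{\pm\vb}/\bs N^{\pm\vb}$ inside $j_*\cM_U$; this is a cleaner and more modular argument, but it places all the weight on knowing that the isomorphisms of Theorem \ref{thm:introex1} are the \emph{natural} ones. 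For part \ref{jstar} this is essentially free if one unwinds the paper's proof (the identification $\cM^{-\vb}_\bh[\bs]_{m_{\vec0}}=j_*(\bh^{\bs}\cdot\cM_U[\bs]_{m_{\vec0}})$ of Theorem \ref{thm:bblocal}\ref{item:bblocal} is an honest equality of submodules). For part \ref{jshstar}, however, the isomorphism $N^\vb/\bs N^\vb\cong j_!\cM_U$ is produced via duality, so the claim that the composite $j_!\cM_U\cong N^\vb/\bs N^\vb\to j_*\cM_U$ is the \emph{canonical} map $j_!\cM_U\to j_*\cM_U$ is a real step: you need to check both maps restrict to the identity on $U$ and then conclude by adjunction ($\Hom(j_!\cM_U,j_*\cM_U)\cong\Hom(\cM_U,\cM_U)$). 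Without that the statement ``the only quotient of $j_!$ that embeds in $j_*$ is $j_{!*}$'' is false as written. Finally, your diagnosis of the main obstacle in the last paragraph is a bit off: that $\shD_Y(\bh^{\pm\vb}\cM_0)$ equals the image of $N^{\pm\vb}$ under $\bs\mapsto 0$ is tautological, not something a $b$-function must establish; the $b$-function input in the paper's own proof plays a different role (showing that the chain $\shD_Y(\bh^{-\vb}\cM_0)\subseteq\shD_Y(\bh^{-\vb-\vec1}\cM_0)\subseteq\cdots$ stabilizes and exhausts $j_*\cM_U$). Also, the parenthetical claim that Theorem \ref{thm:introex1} is ``equivalently'' the Log Comparison Theorem is inaccurate; in the paper these are proved by related but distinct arguments.
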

\noindent This should be known to experts and is a small generalization to Budur's result \cite[Theorem 5.2]{Budur}.

The $\shD_{Y}[\bs]$-modules 
 \[ \cM_\bh\coloneqq D_Y[\bs](\bh^\bs \cdot \cM_0) \textup{ and } j_*(\bh^{\bs}\cdot\cM_U[\bs]) \]
can be alternatively understood as two families of $\shD_Y$-modules over $\Spec \bbC[\bs]$. Theorem \ref{thm:introex1} is about their fibers at integral $\bs \gg 0$ and $\bs \ll 0$ (i.e. each $s_i \gg 0$ or $s_i \ll 0$ respectively). The proofs of Theorem \ref{thm:introex1} and Theorem \ref{thm:introex2} use localization over the generic point and the geometric points in $\Spec \bbC[\bs]$; see \S \ref{sec:bbloc} for details. The inclusion in family:
\[\cM_\bh\hookrightarrow j_*(\bh^{\bs}\cdot\cM_U[\bs])\]
induces fiberwise morphisms of (complexes of) $\shD_Y$-modules
\[\iota_{\bf a}: \bbC_{\bf a}\stackrel{\bL}\otimes_{\bbC[\bs]}\cM_\bh \to \bbC_{\bf a}\stackrel{\bL}\otimes_{\bbC[\bs]}j_*(\bh^{\bs}\cdot\cM_U[\bs])\simeq j_*(\bh^{\bf a}\cdot\cM_U[\bs])\]
at closed points ${\bf a}=(a_1,a_2,\dots,a_k)\in \Spec \bbC[\bs]$, where $\bh^{\bf a}=\prod_lh_l^{a_l}$ and $ \bbC_{\bf a} =  \bbC[\bs]/(\bs - \bf a)$. The $\shD_Y$-modules $j_*(\bh^{\bf a}\cdot\cM_U)$ are $\Z^k$-periodic in ${\bf a}$ and $j_*(\bh^{\bf k}\cdot\cM_U)=j_*(\cM_U)$ for ${\bf k}\in \Z^k$. Here $\bh^{\bf a}\cdot\cM_U$ is the $\shD_U$-module of $\cM_U$ twisted by the local system given by $\bh^{\bf a}$.

Theorem \ref{thm:introex1}\ref{jstar} (or more precisely, Corollary \ref{cor:mainc}) implies that 
\be \label{eq:aaaaaa}
\iota_{\bf a} \textup{ is a quasi-isomorphism for all }a_l\gg0.
\ee
In general, we have that 
\be \textup{Image}(\iota_{\bf a})=\shD_Y(\bh^{\bf a}\cdot\cM_0)\subseteq j_*(\bh^{\bf a}\cdot\cM_U)
\ee
for every ${\bf a}\in \bbC^k$.

\begin{remark}
In the case that $\cM_U=\sO_U$, the result \eqref{eq:aaaaaa} has an application in \cite{BVWZ} to prove a conjecture of Budur \cite{Budur} about zero loci of Bernstein-Sato ideals, which generalizes a classical theorem of Malgrange and Kashiwara relating the $b$-function of a multivariate polynomial with the monodromy eigenvalues on the Milnor fibers cohomology.
\end{remark}


Consider the graph embedding of $\bh$:
\[\eta^\bh\colon Y\hookrightarrow X:=Y\times \bbC^k,  \quad y\mapsto (y, h_1(y),\dots,h_k(y)).\]
Let $t_1, \cdots, t_k$ be coordinates of $\C^k$, and $ D = \bigcup_{i=1}^k \{t_i=0\}$ be the boundary divisor on $X$. Following Malgrange \cite{MalV}, we have a canonical isomorphism 
\[\eta^\bh_{+}(j_*\cM_U)\simeq j_*(\bh^{\bs}\cdot\cM_U[\bs]).\]
Then $\cM^{\vb}_\bh$ is a $\shD_{X,D}$-lattice of $\eta^\bh_{+}(j_*\cM_U)$ for every $\vb\in \Z^k$. See \S\ref{sec:bbloc} and \S\ref{sect:crf} for more details.


By \eqref{eq:drkos}, as we identify $s_l$ with $-t_l\partial_{t_l}$, we have 
\[\eta^\bh_*(\DR(\bbC\stackrel{\bL}{\otimes}_{\bbC[\bs]}\cM^{\vb}_\bh))\simeq \DR_{X,D}(\cM^{\vb}_\bh),\]
Using Corollary \ref{cor:mainc}, we hence have for all $v_l\gg0$
\[\DR_{X,D}(\cM^{-\vb}_\bh)=\eta^\bh_*(Rj_*\DR(\cM_U)),\]
which recovers Theorem \ref{thm:logcom} in this case for lattices of $\eta^\bh_{+}(j_*\cM_U)$. Since for $(a_1,a_2,\dots,a_k)\in \Z^k$
\[
\DR_{X,D}(\cM_\bh^{\vb+(a_1,a_2,\dots,a_k)})=\DR_{X,D}(\cM_\bh^{\vb}(-a_1D_1-\cdots -a_kD_k)),
\]
Corollary \ref{cor:mainc} further implies 
\[\DR_{X,D}(\cM_\bh^{\vb}(-a_1D_1-\cdots -a_kD_k))\simeq \eta^\bh_!(j_!\DR(\cM_U))\]
for a fixed $\vb \in \Z^k$ and for all integral $a_l$ large enough, which gives a partial answer to the question of \eqref{eq:question} for lattices $\cM_{\bh}^\vb$.

\subsection{Index theorems in the log cotangent bundle}
One application of the log comparison theorem is to find a formula for the Euler characteristic of a perverse sheaf $\sF^\bullet$ on $U$:
$$\chi(U, \sF^\bullet)=\sum_{i}(-1)^i\dim \bH^i(U, \sF^\bullet).$$ 
If $U=X$ is compact, then we have the index theorem of Dubson-Kashiwara (\cite{Dub} and \cite{Kash-index}): 
\begin{equation}\label{eq:kdind}
    \chi(X, \sF^\bullet)=[\SiS\sF^\bullet] \cdot [T^*_XX]
\end{equation}
where $T^*_XX$ is the zero section of $T^*X$.  For open $U$, Kashiwara shows 
\be \chi(U, \sF^\bullet)=[\SiS\sF^\bullet] \cdot \Gamma_{d\varphi},\ee
where $\Gamma_{d \varphi}$ is certain perturbation of $T_U^* U$; see Theorem \ref{thm:Kasind}.

In this paper, we compactify $T^*U$ to the log cotangent bundle $T^*(X, D)$, and we get:
\begin{theorem}\label{thm:logind}
Let $(X, D)$ be a smooth log pair with $X$ projective and $U = X \RM D$, and let $\sF^\bullet$ be a perverse sheaf on $U$. Then 
\be \label{eq:logie}
 \chi(U, \sF^\bullet)= [\overline{\SiS}\sF^\bullet] \cdot [T^*_X(X, D)]
\ee
where $\overline{\SiS}\sF^\bullet$ is the closure of $\SiS\sF^\bullet \In T^*U$ inside $T^*(X, D)$, and $T^*_X(X,D)$ is the zero section of $T^*(X,D)$.
\end{theorem}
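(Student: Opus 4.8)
The plan is to deduce Theorem~\ref{thm:logind} from the log comparison theorem together with a log version of the Dubson--Kashiwara index theorem for coherent $\shD_{X,D}$-modules. First I would invoke the Riemann--Hilbert correspondence on $U$ to write $\sF^\bullet\simeq\DR_U(\cM_U)$ for a regular holonomic $\shD_U$-module $\cM_U$, and fix a lattice $\cM$ of $\cM_U$ (Definition~\ref{def:lattice}); note $\cM(qD)=\cM\otimes_{\sO_X}\sO_X(qD)$ is again a lattice for every $q\in\Z$, since log vector fields preserve $\sO_X(qD)$. By Theorem~\ref{thm:logcom} there is $q_0>0$ so that for all $q>q_0$
\[
\DR_{X,D}(\cM(qD))\;\simeq\;Rj_*\DR_U(\cM_U)=Rj_*\sF^\bullet .
\]
Since $X$ is projective and $\bH^i(X,Rj_*\sF^\bullet)=\bH^i(U,\sF^\bullet)$, this already gives $\chi(U,\sF^\bullet)=\chi\bigl(X,\DR_{X,D}(\cM(qD))\bigr)$, so it remains to compute the Euler characteristic of the log de Rham complex of a coherent $\shD_{X,D}$-module on a projective $X$.

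Second, I would establish a log Dubson--Kashiwara formula: for a coherent $\shD_{X,D}$-module $\cN$ on $(X,D)$ with $X$ projective and with constructible log de Rham complex,
\[
\chi\bigl(X,\DR_{X,D}(\cN)\bigr)=[\SiS\cN]\cdot[T^*_X(X,D)]
\]
(with the sign conventions of \eqref{eq:kdind}), where $\SiS\cN$ is the characteristic cycle of $\cN$ in the log cotangent bundle $T^*(X,D)$ and the right-hand side is the degree of the Gysin pullback of $[\SiS\cN]$ along the zero-section embedding, which is defined because $X$ is projective so the image of $\SiS\cN$ in $X$ is proper. The proof follows the classical template: choose a global good filtration $F_\bullet\cN$ so that $\gr^F\cN$ is a coherent sheaf on the vector bundle $T^*(X,D)\to X$ supported on the characteristic variety; filtering the log de Rham (Koszul-type) complex and passing to the associated graded computes $\chi$ termwise, and a Hirzebruch--Riemann--Roch computation on the vector bundle $T^*(X,D)$ over the projective base $X$ identifies the resulting alternating sum with the intersection number of $[\SiS\cN]$ with the zero section, the latter being represented by the Koszul complex on the fibre coordinates. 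Independence of the good filtration is standard.

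Third, I would identify the characteristic cycle of the stabilized lattice: for $q\gg0$,
\[
\SiS\bigl(\cM(qD)\bigr)=\overline{\SiS}\sF^\bullet\quad\text{as cycles in }T^*(X,D).
\]
Over $U$ this is immediate, since $\cM(qD)|_U=\cM_U$ and $T^*(X,D)|_U=T^*U$, so the only content is that for $q\gg0$ the characteristic cycle of $\cM(qD)$ has no component supported over $D$. This is a characteristic-cycle companion of Theorem~\ref{thm:logcom} and I expect it to come out of the same input (Sabbah's generalized $b$-functions): for $q$ large one can choose generators of $\cM(qD)$ over $\shD_{X,D}$ whose principal symbols cut out exactly $\overline{\SiS}\sF^\bullet$. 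Combining the three steps yields
\[
\chi(U,\sF^\bullet)=[\SiS(\cM(qD))]\cdot[T^*_X(X,D)]=[\overline{\SiS}\sF^\bullet]\cdot[T^*_X(X,D)].
\]

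I expect the main obstacle to be the third step, controlling the characteristic cycle of $\cM(qD)$ along the boundary. A priori a coherent $\shD_{X,D}$-lattice can carry ``vertical'' Lagrangian components $T^*_Z(X,D)$ with $Z\subseteq D$, and such components do contribute (through an excess intersection supported on $Z$) to the pairing with the zero section, so they must be shown to disappear after a sufficiently positive twist; this is precisely where the $b$-function analysis underlying Theorem~\ref{thm:logcom} has to be pushed a little further. An alternative route that avoids step three is to apply Kashiwara's index theorem for open varieties (Theorem~\ref{thm:Kasind}) directly, writing $\chi(U,\sF^\bullet)=[\SiS\sF^\bullet]\cdot\Gamma_{d\varphi}$, and then to show that in the compactification $T^*(X,D)$ the closure $\overline{\Gamma_{d\varphi}}$ of the perturbing section is rationally equivalent to $[T^*_X(X,D)]$ through a proper-over-$X$ equivalence while meeting $\overline{\SiS}\sF^\bullet$ only inside $T^*U$, for a suitable choice of exhaustion $\varphi$; the delicate point there is again controlling the intersection over $D$, so the two approaches trade essentially the same difficulty in different guises.
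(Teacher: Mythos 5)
Your three-step outline matches the paper's first (algebraic) proof: step (1) is Theorem~\ref{thm:logcom}, step (2) corresponds to Theorem~\ref{thm:GIT}, and step (3) is the identification of $\SiS\cM(qD)$ with $\overline{\SiS}\sF^\bullet$. Your sketch of step (2) by choosing a good filtration, taking associated graded, and applying a Riemann--Roch computation on the vector bundle $T^*(X,D)$ is in the right spirit; the paper runs it through a Laumon-type direct-image formula (Theorem~\ref{thm:Laumf}, Corollary~\ref{cor:imageK}), which is needed to handle the fact that $f_+$ does not preserve strictness of filtrations --- that subtlety is glossed over in your sketch, but the essential idea is the same.

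The genuine gap is in step (3), and you correctly flag it as ``the main obstacle,'' but you misjudge how it is closed. You propose to show that for $q\gg 0$ the vertical Lagrangian components over $D$ disappear by pushing the $b$-function analysis of Theorem~\ref{thm:logcom} a bit further. This is not how the paper proceeds, and it is not clear the $b$-function machinery yields the characteristic-cycle statement. What the paper actually uses is Ginsburg's theorem (Theorem~\ref{thm:Gil}, quoted from the appendix of \cite{Gil1}): for a regular holonomic $\cM_U$, \emph{every} lattice $\cM$ satisfies $\SiS\cM = \overline{\SiS}\cM_U$, with no twist at all. This is a substantially stronger and uniform statement (a lattice of a regular holonomic module simply never carries vertical Lagrangian components in the log cotangent bundle), and its proof is by microlocalization of $\shD_{X,D}$, not by $b$-functions. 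Without this input your step (3) remains a conjecture; the comparison theorem controls the de Rham complex, not the symbol, and stabilization of the former under twisting does not by itself give stabilization or triviality of the latter over $D$. Once Theorem~\ref{thm:Gil} is invoked, step (3) becomes immediate (for any lattice, so a fortiori for $\cM(qD)$) and the argument closes exactly as you describe. Your ``alternative route'' via Kashiwara's open index theorem and a perturbing section that extends to $T^*(X,D)$ is precisely the paper's second, topological proof in Section~\ref{sec:rrtop}.
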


Since $[\overline\SiS\sF^\bullet]$ and $[T^*_X(X,D)]$ are two $n$-cycles in the Chow ring of the logarithmic cotangent bundle $T^*(X,D)$, their intersection gives a 0-cycle in $X\simeq T^*_X(X,D)$ and  $[\overline\SiS\sF^\bullet]\cdot[T^*_X(X,D)]$ simply means the degree of the $0$-cycle.
To compute the intersection on the RHS, we can try to perturb the zero-section from $T^*_X(X, D)$ to a smooth ($C^\infty$) section, that intersects $\overline{\SiS}\sF^\bullet$ transversally, then one needs to count  intersection points with sign. If $T^*_X(X, D)$ can be perturbed as a holomorphic section that intersects $[\overline\SiS\sF^\bullet]$ transversally, then one only needs to count unsigned intersection points.

If $\sF^\bullet=\bbC_U[n]$ for $\dim U=n$, then  \eqref{eq:logie} implies
\be\chi(U)=(-1)^n[T^*_X(X, D)] \cdot [T^*_X(X, D)],\ee
for every normal crossing compactification $(X,D)$ of $U$.

\begin{theorem}[Non-compact Riemann-Roch]\label{thm:openRR}
In the situation of Theorem \ref{thm:logind}, let $\sF^\bullet$ be a perverse sheaf on $U$ and ${\SiS \sF^\bullet} = \sum_v n_v \La_v$.  Assume that there exists a rational function $f$ on $X$, such that $f|_U$ is non-vanishing, and $\Gamma_{d \log f}$ intersects each $\wb \La_v$ transversally in $T^*(X, D)$ with intersection number denoted as $\gdeg_f^{\log}(\wb \Lambda_v)$,  then 
\[\chi(U, \cF^\bullet)=\sum_{v}n_v \cdot \gdeg_f^{\log}(\wb \Lambda_v).\]
In particular $\chi(U, \cF^\bullet) \ge 0$. 
\end{theorem}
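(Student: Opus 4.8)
The plan is to deduce Theorem \ref{thm:openRR} from Theorem \ref{thm:logind} by computing the intersection number $[\overline{\SiS}\sF^\bullet]\cdot[T^*_X(X,D)]$ geometrically, using the hypothesis function $f$ to move the zero section. First I would recall that by Theorem \ref{thm:logind} we have $\chi(U,\sF^\bullet)=[\overline{\SiS}\sF^\bullet]\cdot[T^*_X(X,D)]$, where the right-hand side is the degree of a $0$-cycle obtained by intersecting two $n$-dimensional cycles in the Chow ring of $T^*(X,D)$. The key observation is that since $\log f$ is a (multivalued) function on $X$ whose differential $d\log f$ is a rational section of $\Omega^1_X$ with only logarithmic poles along $D$ (because $f|_U$ is nonvanishing, so $f$ has zeros and poles only along components of $D$), the graph $\Gamma_{d\log f}$ is a well-defined holomorphic section of the log cotangent bundle $T^*(X,D)\to X$. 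In particular $[\Gamma_{d\log f}]$ is rationally (indeed, linearly, via scaling $t\mapsto t\cdot d\log f$) equivalent to the zero section $[T^*_X(X,D)]$ as $n$-cycles in $T^*(X,D)$.

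Next I would use this rational equivalence to replace the zero section by $\Gamma_{d\log f}$ in the intersection product:
\[
\chi(U,\sF^\bullet)=[\overline{\SiS}\sF^\bullet]\cdot[T^*_X(X,D)]=[\overline{\SiS}\sF^\bullet]\cdot[\Gamma_{d\log f}]=\sum_v n_v\,[\wb\Lambda_v]\cdot[\Gamma_{d\log f}].
\]
Under the transversality hypothesis, each term $[\wb\Lambda_v]\cdot[\Gamma_{d\log f}]$ is a genuine transverse intersection of complex submanifolds of the complex manifold $T^*(X,D)$, hence a sum of $+1$'s over the (finitely many, by properness since $X$ is projective and both cycles are closed of complementary dimension) intersection points — this is exactly $\gdeg^{\log}_f(\wb\Lambda_v)\ge 0$. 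Summing gives $\chi(U,\sF^\bullet)=\sum_v n_v\cdot\gdeg^{\log}_f(\wb\Lambda_v)$. For the final nonnegativity claim $\chi(U,\sF^\bullet)\ge 0$, I would invoke that the characteristic cycle of a perverse sheaf is an effective cycle, i.e. all $n_v\ge 0$ (indeed $n_v\in\Z_{\ge 0}$); combined with $\gdeg^{\log}_f(\wb\Lambda_v)\ge 0$ this forces $\chi(U,\sF^\bullet)\ge 0$.

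The main obstacle I anticipate is justifying cleanly that the intersection numbers computed against the holomorphic section $\Gamma_{d\log f}$ agree with those computed against the zero section, i.e. that the relevant intersection product in the (possibly non-proper, if one is careless) total space $T^*(X,D)$ is deformation-invariant here. The honest way to handle this is to note that $T^*_X(X,D)$ and $\Gamma_{d\log f}$ are both sections of the vector bundle $\pi\colon T^*(X,D)\to X$ with $X$ projective, so after intersecting with $\overline{\SiS}\sF^\bullet$ (which is closed of complementary dimension) the resulting cycle is supported on a projective — hence proper — scheme, and one may compute the degree after pushing forward to $X$; the line of holomorphic sections $\{t\cdot d\log f\}_{t\in\bbC}$ together with the zero section sweeps out a family over $\bbC$ realizing the rational equivalence, and properness of $\pi$ restricted to the support of the intersection gives invariance of the degree. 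A secondary subtlety is checking that $\overline{\SiS}\sF^\bullet$ is conic (stable under the fiberwise scaling action), which guarantees that the intersection with the zero section is what one expects and that the generic member of the pencil still meets $\overline\SiS\sF^\bullet$ properly; this follows because $\SiS\sF^\bullet\subset T^*U$ is conic and its closure in the partial compactification $T^*(X,D)$ remains conic.
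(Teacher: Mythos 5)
Your proposal is correct and is essentially the same argument as the paper's: the paper also invokes Theorem \ref{thm:logind}, then perturbs via the pencil of sections $\{s\cdot d\log f\}_{s\in\bbC}$ and uses conicity of $\Lambda=\overline{\SiS}\sF^\bullet$ to identify the resulting count with $\sum_v n_v\,\gdeg_f^{\log}(\wb\Lambda_v)$. The only cosmetic difference is that the paper packages the pencil as a deformation $\Lambda^\sharp_f\to\bbC$ of the Lagrangian (keeping the zero section fixed), whereas you move the zero section to $\Gamma_{d\log f}$ (keeping $\overline{\SiS}\sF^\bullet$ fixed); by conicity these are equivalent, and your discussion of properness of the intersection support and of effectivity of the characteristic cycle correctly addresses the relevant subtleties.
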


\begin{remark}
If $U = (\C^*)^n$ with coordinates $(z_1,\dots,z_n)$, Franecki and Kapranov proved
\[\chi((\C^*)^n, \sF^\bullet) = \sum_v n_v \gdeg_f(\Lambda_v), \]
where $\gdeg_f(\Lambda_v)$ is the intersection of $\Gamma_{d \log f}$ with $\Lambda_v$ in $T^*U$ for $f=z_1^{s_1} \cdots z_n^{s_n}$ with generic non-zero $s_i \in \Z$. Using it, they further proved a non-compact Riemann-Roch theorem on quasi-abelian varieties \cite[Theorem (1.3)]{FK}. 

Take the natural compactification $(\C^*)^n\hookrightarrow \P^n$ and set $D$ to be the boundary divisor. Note that by generic choice of $f$, one ensures the intersection of $\Gamma_{d \log f}$ with $\wb \La_v$ in $T^*(\P^n, D)$ is actually contained in $T^*U$, hence $\gdeg_f(\Lambda_v) = \gdeg_f^{\log}(\wb \La_v)$. Therefore, applying Theorem \ref{thm:openRR} gives an alternative proof of Theorem (1.3) in \emph{loc. cit.}
\end{remark}

\begin{example}
Let $X = \P^2$ with homogeneous coordinates $(X,Y,Z)$, $D = (XYZ=0)$ and $U = (\C^*)^2$. Let $x = X/Z, y=Y/Z$ be affine coordinates on $U$.  Let  $S = \{(x,y) \in U \mid y=x(1-x)\}$, then $S$ is $\P^1$ removing three points. Let $L = \C_S[1]$ where the degree shift is to make it a perverse sheaf. We then have 
\[ \chi(U, L) = -(1 -2) = 1.\]
On the other hand, we can take a rational function on $X$ as $f = X/Y$, then $f|_U = x/y$ is non-vanishing. Then on the $U_0 \cong \C^2$ patch where $Z\neq 0$, we have 
\[\Gamma_{d \log f} = \frac{dx}{x} - \frac{dy}{y}. \]
Then $\SiS L$ in $T^*U$ is the conormal bundle of $S$, which is $\C$-cone generated by $d(y-x(1-x)) = (-1+2x)dx + dy$, thus
\[ \SiS L  = \{(x, y; \xi_x, \xi_y) \in T^*U \mid y - x(1-x)=0, \xi_x  = (-1+2x) \xi_y \}. \]
We can verify that $\Gamma_{d \log f} \cap \SiS L \cap T^*U = \emptyset$. Thus, one only need to check the intersection of $\Gamma_{d \log f} \cap \SiS L$ in $T^*(X, D)$ over the boundary $D$. 

Then the equations for $\wb{\SiS} L$ in  $T^*(X, D)|_{U_0}$ are
\[ y=x(1-x), \quad (1-x)\eta_x = (-1+2x) \eta_y. \]
Hence, there is one intersection point at $(x,y)=(0,0)$ and $(\eta_x, \eta_y)=(1,-1)$. Similarly, one can check in other patches $U_1\{X \neq 0\}$ and $U_2 = \{Y\neq0\}$, to see there is no additional intersections. Thus 
\[ \#(\wb{\SiS }L \cap \Gamma_{d \log f}) = 1.\]
This verifies the non-compact Riemann-Roch theorem. 
\end{example}

\subsection{Sketch of the proof of main theorems}
For the Log Comparison theorem, we have to understand the behavior of log de Rham complexes for lattices $\cM(qD)$ as $q$ changes. To this purpose, we introduce Bernstein-Sato polynomials (or $b$-functions) for lattices. By applying Sabbah's multi-filtrations for holonomic modules \cite{Sab}, the existence of $b$-functions for lattices is guaranteed by Theorem \ref{thm:bflattice}. Using $b$-functions for lattices, we show that $\DR_{X,D}(\cM(qD))$ stabilizes for large $q$ (actually for large $|q|$ is enough). Then we note that taking inductive limit as $q\to \infty$ 
$$\lim_{q\to \infty}\DR_{X,D}(\cM(qD)) = \DR_{X,D}(\lim_{q\to \infty} \cM(qD)) = \DR(j_*\cM_U).$$ 
Hence for large enough $q$, we have $\DR_D(\cM(qD)) = \DR(j_*\cM_U)$. 

The (log) \emph{de Rham} functors in the log comparison theorem mean the analytic ones by GAGA, as perverse sheaves are defined in the Euclidean topology. One can replace the algebraic regular holonomic module $\cM_U$ by $\widetilde\cM(*D)$ and it still holds in the analytic category, where $\widetilde\cM$ is an analytic regular holonomic $\shD_X$-module and $\widetilde\cM(*D)$ is its algebraic localization along $D$ (see \cite[Chapter II.5]{Bj} for definitions).
 \vskip 5mm
 
For the index theorem, we provide two proofs. 

    (1) The first one is more algebraic and motivated by Laumon's algebraic approach to the Dubson-Kashiwara index equality \eqref{eq:kdind} \cite[\S 6]{Laumon}. Roughly speaking, adapting Laumon's construction of direct images of filtered $\shD$-modules \cite{Laumon1} to the log situation, we obtain a Laumon-type direct-image formula for log $\shD$-modules; see Theorem \ref{thm:Laumf} and Corollary \ref{cor:imageK}.  Using the Laumon-type formula and Grothendieck-Riemann-Roch together with Ginsburg's characteristic cycle theorem (Theorem \ref{thm:Gil}), we first obtain an index result for lattices; see Theorem \ref{thm:GIT}. It would further imply the index equality \eqref{eq:logie} after applying Theorem \ref{thm:logcom}.
    \vskip 3mm
    
    (2) The second proof is more topological, and it is presented at Section 8. The Euler characteristic of $\chi(\Hom(F, G))$ for two constructible sheaves $F,G$ on an open $U$, can be computed by counting intersection of $\SiS(F)$ and $\SiS(G)$ (\cite{Gil}), or one can directly compute $\Hom(F, G)$ as $\Hom_{\textup{Fuk}(T^*U)}(\cL(F), \cL(G))$ where $\cL(F), \cL(G)$ are Lagrangians in Fukaya category \cite{NZ}. The first step in considering Lagrangian intersection to make them intersect transversally. If both Lagrangians are compact, then any Hamiltonian perturbation is allowed, if two Lagrangians intersects at 'infinity', then certain directed perturbation ('wrapping') near infinity is needed. In our case, one of the Lagrangian is $T^*_U U$, and we can perturb it to the Lagrangian section (\cite{Kash-index}) $\Gamma_{d \varphi}$, where $\varphi:U \to \R$ has some growth property near infinity. Our only task, is then choose a nice perturbation, so that $\Gamma_{d \varphi} \in T^*U$ extends to a smooth section $\overline\Gamma_{d \varphi}$ in the log cotangent bundle $T^*(X, D)$. A prototypical example is $d \log |z|$ as a smooth section in the log cotangent bundle $T^*(\P^1, \{0, \infty\})$. Also, one need to check there is no additional contribution to the Lagrangian intersection in replacing $\Gamma_{d \varphi}$ by its log closure $\overline\Gamma_{d \varphi}$. 
    
    At last, the topological proof of Theorem \ref{thm:logind} makes it still hold when replacing $X$ by a complex compact manifold. 

\subsection{Outline of the paper}
In \S \ref{sec:pre}, we collect basic properties for log $\shD$-modules. We discuss the direct images of log $\shD$-modules under proper morphisms of log smooth pairs in \S\ref{sec:directimage}. \S \ref{sec:bfs} is about $b$-functions for lattices and the proof of the log comparison theorem. In \S\ref{sec:bbloc}, we discuss $\shD$-modules constructed from graph embeddings from a log point of views. In \S\ref{sec:logind}, \ref{sec:noncrr} and \ref{sec:rrtop}, we discuss the index theorem in the logarithmic cotangent bundle and non-compact Riemann-Roch.   

\begin{ack}
The authors specially thank Botong Wang; the paper was partially motivated by a Riemann-Roch type question asked by him. They also thank Victor Ginsburg for answering their questions and Nero Budur, Clemens Koppensteiner and Mihnea Popa for useful comments on the first draft of the paper.  PZ is supported by the Simons Postdoctoral Fellowship as part of the Simons Collaboration on HMS, and he would like to thank IHES for providing an excellent working environment. 
\end{ack}

\section{Preliminaries on $\shD_{X,D}$-modules}\label{sec:pre}
Let $(X,D)$ be a log smooth pair with $\dim X=n$. Set $\shTA_X(-\log D)$ to be the locally free subsheaf of $\shTA_X$ generated by algebraic vector fields with logarithmic zeros along $D$, and $\shD_{X,D}$ the subalgebra of $\shD_X$ generated by $\sO_X$ and $\shTA_X(-\log D)$. One can check that $\shD_{X,D}$ is a coherent and noetherian subalgebra of $\shD_X$. We then consider the logarithmic cotangent bundle
\[T^*(X,D)\coloneqq \Spec\sA^\bullet_{X,D},\]
where $\sA^\bullet_{X,D}=\Sym(\shTA_X(-\log D))$, the symmetric algebra of $\shTA_X(-\log D)$.
Then the natural inclusion 
$$\shTA_X(-\log D)\hookrightarrow \shTA_X$$ 
induces a morphism over $X$
\[\begin{tikzcd}
T^*X\arrow[rr]\arrow[dr]& & T^*(X,D)\arrow[dl, "\pi"]\\
&X&.
\end{tikzcd}\]
We use $T^*_X(X,D)$ to denote the zero section of $T^*(X,D)$. 

We consider both left and right $\shD_{X,D}$-modules. The following lemma is useful and easy to check. 
\begin{lemma}\label{lm:pchrules}
Let $\cM$ and $\cM'$ be two left $\shD_{X,D}$-modules and let $\cN$ and $\cN'$ be two right $\shD_{X,D}$-modules. Then 
\[\cM\otimes_\sO\cM'\textup{, } \cH\textup{om}_\sO(\cM, \cM') \textup{ and }  \cH\textup{om}_\sO(\cN, \cN') \]
are left $\shD_{X,D}$-modules, and
\[\cM\otimes_\sO\cN \textup{ and }  \cH\textup{om}_\sO(\cM, \cN) \]
are right $\shD_{X,D}$-modules, where the $\shD_{X,D}$-module structures are induced by product rules or chain rules of taking differentiations.
\end{lemma}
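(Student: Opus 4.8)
The plan is to reduce everything to the defining property of $\shD_{X,D}$. Since $\shD_{X,D}$ is the subalgebra of $\shD_X$ generated over $\sO_X$ by the Lie subalgebroid $\shTA_X(-\log D)$, a left $\shD_{X,D}$-module structure on an $\sO_X$-module $\cM$ is the same as a rule assigning to each local section $\theta$ of $\shTA_X(-\log D)$ a $\bbC$-linear endomorphism $m\mapsto\theta\cdot m$ of $\cM$ such that $(f\theta)\cdot m=f(\theta\cdot m)$, $\theta\cdot(fm)=\theta(f)\,m+f(\theta\cdot m)$ and $[\theta,\theta']\cdot m=\theta\cdot(\theta'\cdot m)-\theta'\cdot(\theta\cdot m)$ for all $f\in\sO_X$; a right $\shD_{X,D}$-module structure is described by the same data with the second relation replaced by $(fm)\cdot\theta=f(m\cdot\theta)-\theta(f)\,m$ and the opposite sign in the bracket relation. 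As $\shTA_X(-\log D)\subset\shTA_X$ acts on $\sO_X$ by honest $\bbC$-linear derivations, these are precisely the axioms one verifies in the classical theory of $\shD_X$-modules, now read with ``vector field'' replaced throughout by ``log vector field''; the substance of the lemma is only that the logarithmic condition is preserved by the six operations, which holds because $\shTA_X(-\log D)$ is closed under the Lie bracket and under multiplication by $\sO_X$.

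Concretely, I would endow the relevant sheaves with the standard formulas, now using a local log vector field $\theta$: on $\cM\otimes_\sO\cM'$ (with $\cM,\cM'$ left) set $\theta\cdot(m\otimes m')=(\theta\cdot m)\otimes m'+m\otimes(\theta\cdot m')$; on $\cH\textup{om}_\sO(\cM,\cM')$ (both left) set $(\theta\cdot\phi)(m)=\theta\cdot\phi(m)-\phi(\theta\cdot m)$; on $\cH\textup{om}_\sO(\cN,\cN')$ (both right) set $(\theta\cdot\psi)(n)=\psi(n\cdot\theta)-\psi(n)\cdot\theta$; on $\cM\otimes_\sO\cN$ (with $\cM$ left, $\cN$ right) set $(m\otimes n)\cdot\theta=m\otimes(n\cdot\theta)-(\theta\cdot m)\otimes n$; and on $\cH\textup{om}_\sO(\cM,\cN)$ (with $\cM$ left, $\cN$ right) set $(\psi\cdot\theta)(m)=\psi(m)\cdot\theta+\psi(\theta\cdot m)$. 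The first two are the usual product and chain rules; in each case the precise signs are forced by the requirement that the tensor formulas descend from $\otimes_\bbC$ to $\otimes_\sO$ and that the Hom formulas take values in $\cH\textup{om}_\sO$ rather than merely $\cH\textup{om}_\bbC$, and the last three can alternatively be deduced from the first two by the standard left/right side-changing functor.

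The remaining task is to check, for each of these five formulas, the three axioms above together with their right-module counterparts. The $\sO_X$-linearity in the $\theta$-slot and the bracket identity are immediate from the corresponding properties of $\shTA_X(-\log D)$, the bracket there being the restriction of the one on $\shTA_X$; the only step requiring any attention is the Leibniz-type identity, which is exactly what makes the tensor constructions well defined over $\sO_X$ and the Hom constructions $\sO_X$-linear. I anticipate no genuine obstacle: one may pass to local coordinates $x_1,\dots,x_n$ adapted to $D$, in which $\shTA_X(-\log D)$ is freely generated over $\sO_X$ by $x_1\pa_{x_1},\dots,x_r\pa_{x_r},\pa_{x_{r+1}},\dots,\pa_{x_n}$ with $x_1\cdots x_r$ a local equation for $D$, whereupon each verification reduces verbatim to the classical $\shD_X$-module computation. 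If anything, the only mildly delicate point is keeping track of signs in the two right-module formulas, and the local-coordinate description renders those checks mechanical.
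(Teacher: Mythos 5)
Your proposal is correct, and it follows exactly the route the paper has in mind: the paper gives no written proof (it only remarks that the structures are ``induced by product rules or chain rules of taking differentiations''), and your argument makes that precise by characterizing $\shD_{X,D}$-module structures in terms of compatible actions of the Lie algebroid $\shTA_X(-\log D)$ and then writing down the five Leibniz/chain-rule formulas, whose well-definedness over $\sO_X$ and compatibility with the Lie bracket are the standard checks. The explicit formulas you give, including the signs in the two right-module cases, are all correct.
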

The order filtration of $\shD_X$ induces an order filtration $F_\bullet$ of $\shD_{X,D}$ with the associated graded algebra $\grf\shD_{X,D}$ and we have $\grf\shD_{X,D}\simeq \FA_{X,D}.$
\begin{definition}
A \emph{coherent} filtration $F_\bullet$ of a left $\shD_{X,D}$-module $\cM$ is an exhaustive increasing $\Z$-filtration bounded from below compatible with the order filtration of $\shD_{X,D}$ so that the associated graded module
$\grf\cM$ is \emph{coherent} over $\FA_{X,D}$. We then say $(\cM,F_\bullet)$ is coherent over $(\shD_{X,D},F_\bullet)$. We also say that $\cM$ is \emph{coherent} over $\shD_{X,D}$ if there exists a \emph{coherent} filtration $F_\bullet$. 
\end{definition}
Since $\pi$ is affine, $\widetilde{\gr}^F_\bullet\cM$ is a coherent $\sO_{T^*(X,D)}$-module. Analogous to the characteristic cycles of coherent $\shD_X$-modules, we define the \emph{characteristic cycle} of $\cM$  to be 
\[\SiS\cM=\sum m_p({\widetilde{\gr}^F_\bullet\cM})\cdot\bar p,\]
where $p$ goes over the generic points of irreducible components of the support of $\widetilde{\gr}^F_\bullet\cM$ and $m_p$ is the multiplicity of $\widetilde\gr^F_\bullet\cM$ at $p$. Similar to $\shD_X$-modules, the characteristic cycle is independent of the choices of coherent filtrations. 

Assume $x_1\partial_{x_1},\dots, x_k\partial_{x_k},\partial_{x_{k+1}}\dots,\partial_{x_n}$ are free generators of $\shTA_X(-\log D)$ locally with coordinates $(x_1,\dots,x_n)$ so that $D$ is defined by $x_1\cdots x_k=0$. Then we have a canonical section (independent of local coordinates)
$$\sum_i \frac{dx_i}{x_i}\otimes x_i\partial_{x_i}\in \Gamma(X, \Omega^1_X(\log D)\otimes\shTA_X(-\log D)).$$
The logarithmic \emph{de Rham complex} $\DR_D(\shD_{X,D})$ of $\shD_{X,D}$ is the following complex of right $\shD_{X,D}$-modules starting from the $-n$-term
\[\shD_{X,D}\stackrel{\nabla}{\rarr} \Omega^1_X(\log D)\otimes\shD_{X,D}\to \dots\to \Omega^n_X(\log D)\otimes \shD_{X,D}\]
where the first differential is 
\[P\mapsto \sum_i \frac{dx_i}{x_i}\otimes x_i\partial_{x_i}\cdot P\]
for a section $P$ of $\shD_{X,D}$. For a left $\shD_{X,D}$-module $\cM$, its \emph{de Rham complex} is
\be\label{eq:logdrd}
\DR_D(\cM)=\DR_D(\shD_{X,D})\otimes_{\shD_{X,D}}\cM.
\ee
It is also denoted as $\DR_{X,D}(\cM)$ if the ambient space $X$ needs to be emphasized.
In local coordinates, it is 
\begin{equation}\label{eq:drkos}
    \DR_D(\cM)\simeq\Kos(\cM;x_1\partial_{x_1},\dots, x_k\partial_{x_k},\partial_{x_{k+1}}\dots,\partial_{x_n}),
\end{equation}
the \emph{Koszul complex} of $\cM$ with actions of $x_1\partial_{x_1},\dots, x_k\partial_{x_k},\partial_{x_{k+1}}\dots,\partial_{x_n}$. 

In the case that $\cM$ has a filtration, $\DR_D(\cM)$ is filtered as follows:
\[F_p\DR_D(\cM)=[F_{p-n}\cM\to\Omega^1_X(\log D )\otimes F_{p-n+1}\cM\to\dots\to\Omega^n_X(\log D)\otimes F_p\cM].\]
We denote the associated graded complex by $\grf\DR_D(\cM)$, which is a complex of $\sAG_{X,D}$-modules.

Using Lie derivatives on $\omega_X$ and Lemma \ref{lm:pchrules}, the logarithmic canonical sheaf $\omega_X(D)=\Omega^n_X(\log D)$ is naturally a right $\shD_{X,D}$-module. We then have the evaluation map 
\[\textup{ev}\colon \Omega^n_X(\log D)\otimes \shD_{X,D}\to \omega_X(D)\]
given by 
\[\omega\otimes P\mapsto \omega\cdot P.\]
\begin{lemma}\label{lem:resomega}
The evaluation map \textup{ev} induces a locally free resolution of $\omega_X(D)$ as a right $\shD_{X,D}$-module
\[\DR_D(\shD_{X,D})\stackrel{\textup{ev}}{\rarr}\omega_X(D).\]
\end{lemma}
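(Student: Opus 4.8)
The plan is to reproduce, in the logarithmic setting, the classical argument that the Spencer--de Rham complex of $\shD_X$ is a locally free resolution of $\omega_X$: first reduce the statement to an exactness claim, and then prove exactness by passing to the associated graded for the order filtration, where the complex becomes the Koszul complex of a regular sequence. I would start by disposing of ``locally free'': giving $\Omega^p_X(\log D)\otimes_{\sO_X}\shD_{X,D}$ the right $\shD_{X,D}$-module structure by right multiplication on the second factor, a local frame of $\Omega^p_X(\log D)$ identifies it with $\shD_{X,D}^{\oplus\binom np}$, so every term of $\DR_D(\shD_{X,D})$ is locally free as a right $\shD_{X,D}$-module; and one checks straight from the formulas that $\nabla$ and $\textup{ev}$ are right $\shD_{X,D}$-linear. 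It then remains to prove that $\textup{ev}$ realizes $\omega_X(D)$ as $\mathcal{H}^0(\DR_D(\shD_{X,D}))$ with no higher cohomology, i.e. that the augmented complex $[\shD_{X,D}\xrightarrow{\nabla}\Omega^1_X(\log D)\otimes\shD_{X,D}\to\cdots\to\Omega^n_X(\log D)\otimes\shD_{X,D}\xrightarrow{\textup{ev}}\omega_X(D)]$ is exact, and exactness may be checked locally.

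Next I would set up the local Koszul picture. In coordinates $(x_1,\dots,x_n)$ with $D=(x_1\cdots x_k=0)$, set $\delta_i=x_i\partial_{x_i}$ for $i\le k$ and $\delta_i=\partial_{x_i}$ for $i>k$. These pairwise commute in $\shD_{X,D}$, and the logarithmic $1$-forms $\tfrac{dx_i}{x_i}$ $(i\le k)$, $dx_i$ $(i>k)$ are closed, so there are no bracket or curvature correction terms in the differential: $\DR_D(\shD_{X,D})$ is exactly the Koszul complex of the commuting left-multiplication operators $\delta_1,\dots,\delta_n$ on $\shD_{X,D}$, in accordance with \eqref{eq:drkos}. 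Now filter: give $\shD_{X,D}$ its order filtration (so $\grf\shD_{X,D}\simeq\FA_{X,D}$), give each $\Omega^p_X(\log D)\otimes\shD_{X,D}$ the de Rham filtration used to filter $\DR_D(\cM)$, and give $\omega_X(D)$ the trivial filtration concentrated in degree $0$. All maps are then filtered, and since $\gr^F_0\shD_{X,D}\simeq\sO_X$, the map induced by $\textup{ev}$ on $\gr^F_0$, namely $\Omega^n_X(\log D)\otimes\sO_X\to\omega_X(D)$, is the canonical isomorphism.

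Passing to the associated graded, the augmented complex becomes $\grf\DR_D(\shD_{X,D})\simeq\Omega^\bullet_X(\log D)\otimes\FA_{X,D}$ augmented to $\omega_X(D)$, with differential $\alpha\otimes Q\mapsto\sum_i\omega_i\wedge\alpha\otimes\sigma(\delta_i)Q$ (with $\omega_i=\tfrac{dx_i}{x_i}$ or $dx_i$). Locally $\FA_{X,D}=\sO_X[\sigma(\delta_1),\dots,\sigma(\delta_n)]$ is a polynomial ring in which $\sigma(\delta_1),\dots,\sigma(\delta_n)$ is a regular sequence, so this is exactly the wedge-type Koszul complex of that regular sequence, augmented by the projection onto its quotient $\Omega^n_X(\log D)\otimes\bigl(\FA_{X,D}/(\sigma(\delta_1),\dots,\sigma(\delta_n))\bigr)\simeq\omega_X(D)$; hence it is exact. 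Since the order filtration on the augmented complex is exhaustive and bounded below and its associated graded is exact, the augmented complex itself is exact by the standard filtered-exactness lemma (on stalks, this is the elementary statement that a bounded-below, exhaustively filtered complex of modules with exact associated graded is exact). This yields the locally free resolution $\DR_D(\shD_{X,D})\xrightarrow{\textup{ev}}\omega_X(D)$.

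I expect no serious obstacle. The only point requiring care is the filtration bookkeeping in the last two paragraphs — choosing the shift conventions so that $\nabla$ and $\textup{ev}$ are filtered, and identifying the graded augmentation with the Koszul projection onto $\omega_X(D)$ — which is routine once $\grf\shD_{X,D}\simeq\FA_{X,D}$ is available. The entire mathematical content is the exactness of the Koszul complex of a regular sequence, transported through the order filtration.
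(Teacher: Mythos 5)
Your proposal is correct and follows essentially the same route as the paper: both pass to the associated graded of the order filtration, identify it with the Koszul complex of the regular sequence of logarithmic symbols in $\sAG_{X,D}$, and deduce exactness of the original complex via the exhaustive, bounded-below filtration (your "filtered-exactness lemma" is exactly the paper's inductive-limit step). The only cosmetic difference is that you make the local-freeness observation and the $\gr^F_0$-identification explicit, which the paper leaves implicit.
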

\begin{proof}
By construction, the associated graded complex $\grf\DR_D(\shD_{X,D})$ can be locally identified with the graded Koszul complex
\[\Kos(\sAG_{X,D};{\bf\xi})\]
where ${\bf\xi}=(\xi_1,\dots,\xi_k)$ are logarithmic symbols of $(x_1\partial_{x_1},\dots, x_k\partial_{x_k},\partial_{x_{k+1}}\dots,\partial_{x_n})$. It is obvious that $(\xi_1,\dots,\xi_k)$ is a regular sequence in $\sAG_{X,D}$. Since each $\xi_i$ is of degree 1, we know that $\grf\DR_D(\shD_{X,D})_k$ is acyclic for $k>0$. Therefore, the inclusion 
\[F_p\DR_D(\shD_{X,D})\rarr F_{p+1}\DR_D(\shD_{X,D})\]
is quasi-isomorphic for every $p>0$. Since $F_0\DR_D(\shD_{X,D})=\omega_X(D)$, we see that 
\[F_p\DR_D(\shD_{X,D})\stackrel{\textup{ev}}{\rarr}\omega_X(D)\]
is a quasi-ismorphism for every $p\ge 0$. Taking inductive limit as $p\to \infty$, as inductive limit functor is exact, we finish the proof. 
\end{proof}

Let us now introduce a special kind of $\shD_{X,D}$-modules, called lattices, following Ginsburg \cite{Gil}.  
\begin{definition}\label{def:lattice}
A (left) coherent $\shD_{X,D}$-module $\cM$ is called a \emph{lattice} of a coherent $\shD_U$-submodule $\cM_U$ if we have $\cM|_U\simeq \cM_U$ and
\[\cM\subseteq  j_*(\cM_U),\]
where $j\colon U=X\setminus D\hookrightarrow X$ is the open embedding. We also say that $\cM$ is a $\shD_{X,D}$-\emph{lattice} of $j_*(\cM_U)$ if $\cM$ is a \emph{lattice} of $\cM_U$.
\end{definition}

\begin{lemma}\label{lem:exlattice}
Assume $\cM_U$ is a coherent $\shD_U$-module. If $j_*\cM_U=\shD_{X}\cdot \cM_0$ for some coherent $\sO_X$-submodule $\cM_0$, then lattices of $\cM_U$ exist. In particular, if $\cM_U$ is regular holonomic, lattices of $\cM_U$ exist. 
\end{lemma}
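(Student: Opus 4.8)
The plan is to prove Lemma~\ref{lem:exlattice} in two stages: first reduce the general coherent case to the construction of a lattice from a coherent $\sO_X$-generating submodule, then verify the hypothesis is met when $\cM_U$ is regular holonomic.

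\textbf{Step 1: From an $\sO_X$-generator to a lattice.} Suppose $j_*\cM_U = \shD_X\cdot\cM_0$ with $\cM_0\subseteq j_*\cM_U$ a coherent $\sO_X$-submodule. I would set $\cM\coloneqq \shD_{X,D}\cdot\cM_0\subseteq j_*\cM_U$, the $\shD_{X,D}$-submodule generated by $\cM_0$. This is manifestly contained in $j_*\cM_U$ by construction. Coherence over $\shD_{X,D}$ follows because $\cM$ admits the coherent filtration $F_p\cM = F_p\shD_{X,D}\cdot\cM_0$: since $\cM_0$ is $\sO_X$-coherent and $F_\bullet\shD_{X,D}$ is a coherent filtration of the noetherian algebra $\shD_{X,D}$, the associated graded $\grf\cM$ is a coherent $\grf\shD_{X,D}\simeq\FA_{X,D}$-module (a quotient of $\FA_{X,D}\otimes_{\sO_X}\cM_0$). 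It remains to check $\cM|_U\simeq\cM_U$. Restricting to $U$, where $D$ is empty, we have $\shD_{X,D}|_U = \shD_U$, so $\cM|_U = \shD_U\cdot(\cM_0|_U)$. Now $\cM_0|_U\subseteq\cM_U$ and $\shD_U\cdot(\cM_0|_U)\subseteq\cM_U$; but also $(j_*\cM_U)|_U = \cM_U$ and $j_*\cM_U = \shD_X\cdot\cM_0$, so restricting gives $\cM_U = \shD_U\cdot(\cM_0|_U)$. Hence $\cM|_U = \cM_U$, and $\cM$ is a lattice of $\cM_U$.

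\textbf{Step 2: The regular holonomic case.} Here I must produce a coherent $\sO_X$-submodule $\cM_0$ of $j_*\cM_U$ with $\shD_X\cdot\cM_0 = j_*\cM_U$. The key point is that for $\cM_U$ regular holonomic on $U$, its direct image $j_*\cM_U$ (i.e.\ the $\shD_X$-module theoretic pushforward, the algebraic localization along $D$ of a regular holonomic extension) is again regular holonomic as a $\shD_X$-module, hence in particular coherent over $\shD_X$. Any coherent $\shD_X$-module on a quasi-projective (or more generally, any) variety is generated over $\shD_X$ by some coherent $\sO_X$-submodule: cover $X$ by affine opens, use that $\shD_X$-coherence gives local finite generation, and patch using quasi-coherence (or invoke that a coherent $\shD_X$-module is the union of its coherent $\sO_X$-submodules and noetherianity of $\shD_X$ forces one of them to generate). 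Taking $\cM_0$ to be such a generating submodule, Step~1 applies and produces the desired lattice.

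\textbf{Main obstacle.} The genuinely substantive input is the claim in Step~2 that $j_*\cM_U$ is $\shD_X$-coherent (equivalently, holonomic), which rests on the standard fact that the pushforward of a regular holonomic module under an open immersion with normal crossing complement remains regular holonomic --- this is where regularity (as opposed to mere holonomicity) is used, since without it the localization $\cM(*D)$ need not be coherent over $\shD_X$. Everything else is a formal verification: the coherence of the filtration $F_p\cM = F_p\shD_{X,D}\cdot\cM_0$ and the restriction identity $\cM|_U = \cM_U$ are routine once one is careful that $\shD_{X,D}$ and $\shD_X$ agree over $U$. I would therefore spend most of the write-up on Step~1 (the filtration argument) and simply cite the regular holonomic pushforward result for Step~2.
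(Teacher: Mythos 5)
Your proposal is correct and follows essentially the same route as the paper: the paper takes $\cM=\shD_{X,D}\cdot\cM_0(kD)$ for any $k\in\Z$ (your construction is the $k=0$ case), checks coherence via the induced filtration, and for the regular holonomic case cites that $j_*\cM_U$ remains regular holonomic hence coherent over $\shD_X$, so is generated by a coherent $\sO_X$-submodule. Your write-up just spells out the filtration and restriction checks in more detail than the paper does.
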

\begin{proof}
We know that $\cM=\shD_{X,D}\cdot \cM_0(kD)$ is a lattice of $\cM_U$ for every $k\in \Z$. Hence, the first statement follows. If $\cM_U$ is regular holonomic, then $j_*(\cM_U)$ is also regular holonomic by definition (see for instance \cite[\S6]{HTT}). In particular, $j_*(\cM_U)$ is generated by a coherent $\sO_X$-module over $\shD_{X}$, from which the second statement follows.
\end{proof}

By Lemma \ref{lm:pchrules}, if $\cM$ is a lattice, then $\cM(E)=\cM\otimes\sO_X(E)$ is also a (left) $\shD_{X,D}$-module for every divisor $E$ supported on $D$ and hence they are all lattices of $\cM_U$ (one can easily check coherence of $\cM(E)$). Furthermore, if $\cM$ and $\cM'$ are two lattices of $\cM_U$, then for some $q>0$ we have 
\begin{equation}\label{eq:inclattice}
    \cM\subseteq \cM'(qD)\textup{ and } \cM'\subseteq \cM(qD).
\end{equation}

\subsection{Characteristic cycles}

The Lie bracket of $\shTA_X$ induces a Lie bracket of $\shTA_{X}(-\log D)$. Hence $T^*(X,D)$ is a logarithmic symplectic manifold with a symplectic form having log poles along $D$. An ideal sheaf $\mathcal I$ of $\sO_{T^*(X,D)}$ is called \emph{involutive} if 
\[\{\cI,\cI\}\subseteq \cI\]
where $\{\bullet,\bullet\}$ is the \emph{Poisson bracket} induced by the logarithmic symplectic form. 
A subvariety of $T^*(X,D)$ is called \emph{involutive} if its ideal sheaf is so. By \emph{Gabber's involutivity theorem} (see for instance \cite[A:III.3]{Bj}), we obtain:
\begin{theorem}
If $\cM$ is a coherent $\shD_{X,D}$-module, then the support of $\SiS\cM$ is involutive. 
\end{theorem}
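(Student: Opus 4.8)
The plan is to reduce the claim for $\shD_{X,D}$-modules to the classical Gabber involutivity theorem for $\shD_X$-modules, exploiting the morphism $\rho\colon T^*X\to T^*(X,D)$ over $X$ induced by $\shTA_X(-\log D)\hookrightarrow\shTA_X$, which away from $D$ is an isomorphism and over $D$ is the relative spectrum of the graded algebra map $\sAG_{X,D}\to \sAG_X$. The first step is to work locally with coordinates $(x_1,\dots,x_n)$ for which $D=(x_1\cdots x_k=0)$, so that $\shTA_X(-\log D)$ is freely generated by $x_1\partial_{x_1},\dots,x_k\partial_{x_k},\partial_{x_{k+1}},\dots,\partial_{x_n}$, with logarithmic symbols $\xi_1,\dots,\xi_k,\xi_{k+1},\dots,\xi_n$; these are related to the ordinary cotangent coordinates by $\eta_i=x_i^{-1}\xi_i$ for $i\le k$ and $\eta_i=\xi_i$ for $i>k$. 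A direct computation of Poisson brackets then shows $\{x_i,\xi_i\}=x_i$ for $i\le k$, $\{x_i,\xi_j\}=0$ for $i\neq j$, and the other brackets as usual, which exhibits the logarithmic symplectic form on $T^*(X,D)$ explicitly and makes the comparison with the standard form on $T^*X$ transparent: $\rho$ is a Poisson morphism on the open locus $\prod_{i\le k}x_i\neq 0$.

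The second step handles the passage from $T^*X$ to $T^*(X,D)$. Given a coherent $\shD_{X,D}$-module $\cM$ with a good filtration $F_\bullet$, one has the coherent $\sO_{T^*(X,D)}$-module $\widetilde{\gr}^F_\bullet\cM$; the point is that $\shD_X\otimes_{\shD_{X,D}}\cM$ carries the filtration induced by $F_\bullet\shD_X\cdot F_\bullet\cM$, and its associated graded is (the $\rho$-pushforward of) $\sO_{T^*X}\otimes_{\rho^{-1}\sO_{T^*(X,D)}}\widetilde{\gr}^F_\bullet\cM$ up to taking the image, so that $\supp(\SiS\cM)\cap T^*U$ is identified with $\supp(\SiS(\shD_U\otimes\cM_U))\cap T^*U$, which is involutive in $T^*U$ by Gabber. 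This gives involutivity of $\SiS\cM$ on the dense open $T^*(X,D)\setminus\pi^{-1}(D)$; since involutivity of a closed subvariety is a condition that can be checked on a dense open subset of each irreducible component (the radical ideal of a component is determined generically), it follows that every component of $\supp(\SiS\cM)$ meeting $T^*U$ is involutive.

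The remaining and genuinely new case is that of components of $\supp(\SiS\cM)$ contained in the boundary $\pi^{-1}(D)$, where the above comparison degenerates and $\rho$ is not an isomorphism. I expect this to be the main obstacle. To treat it, I would invoke the abstract form of Gabber's theorem, which applies to any filtered ring whose associated graded is a commutative Noetherian ring equipped with a Poisson bracket of the appropriate degree $-1$: the order filtration $F_\bullet\shD_{X,D}$ has $\grf\shD_{X,D}\simeq\sAG_{X,D}$, the Poisson bracket induced from the commutator on $\shD_{X,D}$ has degree $-1$, and it agrees with the bracket coming from the logarithmic symplectic structure by the local computation of the first step. Hence the purely algebraic statement — for any good filtration on any coherent $\shD_{X,D}$-module, the annihilator radical of $\grf\cM$ in $\sAG_{X,D}$ is closed under $\{\,\cdot\,,\,\cdot\,\}$ — is exactly Gabber's theorem applied verbatim to the filtered ring $(\shD_{X,D},F_\bullet)$; see \cite[A:III.3]{Bj}. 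This covers all components at once, including the boundary ones, so the only real content is checking the hypotheses of the abstract theorem, namely coherence and Noetherianity of $\sAG_{X,D}=\Sym(\shTA_X(-\log D))$ (already recorded in this section) and the degree and Jacobi properties of the bracket. Once these are verified the involutivity of $\supp(\SiS\cM)$ follows immediately.
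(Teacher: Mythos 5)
Your final step---invoking the abstract Gabber involutivity theorem for the filtered ring $(\shD_{X,D},F_\bullet)$, whose associated graded $\sAG_{X,D}$ is commutative, Noetherian, and carries the degree $-1$ Poisson bracket induced by the commutator---is exactly what the paper does: its proof is nothing more than a one-line citation to \cite[A:III.3]{Bj}. As you yourself note at the end, this abstract statement covers all components of $\supp\SiS\cM$ at once, including those sitting inside $\pi^{-1}(D)$, so the first two thirds of your argument (comparing with $T^*U$ via $\rho\colon T^*X\to T^*(X,D)$, applying classical Gabber to $\cM|_U$, and then extending by density) become redundant: they can only reach components meeting $T^*U$, and by the time you handle the boundary you are already invoking the general theorem that subsumes the open case. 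Nothing in those earlier steps is wrong---the local computation $\{x_i,\xi_i\}=x_i$, the agreement of the two Poisson brackets on $T^*U$, and the observation that involutivity of a reduced irreducible component is a generic condition all check out---but they add length without adding coverage. A cleaner write-up would begin and end with the abstract Gabber theorem and devote the verification entirely to its hypotheses: that $\grf\shD_{X,D}\simeq\sAG_{X,D}$ is commutative Noetherian (recorded earlier in the section), and that the commutator-induced bracket on $\grf\shD_{X,D}$ is the one coming from the log-symplectic form on $T^*(X,D)$, which is precisely your first-step computation.
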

It is worth mentioning that the dimension of logarithmic characteristic cycles does not satisfy the Bernstein inequality. For instance, considering the $\shD_{\bbC, 0}$-module $\bbC=\bbC[x]/x\bbC[x]$, its characteristic cycle is the point $\{0\}\subset\bbC\simeq T^*_\bbC(\bbC,0)\subset T^*(\bbC,0)$. The defect of Bernstein inequality can be fixed by considering log dimensions; see \cite[\S 3.3]{KT}.

\section{Direct images of $\shD_{X,D}$-modules}\label{sec:directimage}
We discuss direct image functors for $\shD_{X,D}$-modules in this section. Suppose $f\colon (X, D)\to (Y,E)$ is a morphism of log smooth pairs, that is, $(X,D)$ and $(Y,E)$ are two log smooth pairs and $f$ is a morphism between $X$ and $Y$ so that $f^{-1}E\subseteq D$. We also say that $f$ is a log morphism in this case. 

Analogous to $\shD$-modules, we define the transfer module $\shD_f$ of the log morphism $f$ by 
\[\shD_f=\omega_f\otimes f^{*}\shD_{Y,E}\]
where $\omega_f=\omega_{X}(D)\otimes f^*\omega_Y(E)^{-1}$, the relative logarithmic canonical sheaf. Clearly, $\shD_f$ is a right $\shD_{X,D}$ and left $f^{-1}\shD_{Y,E}$ bi-module with a filtration $F_\bullet$ induced from the order filtration of $\shD_{Y,E}$. The direct image functor between bounded derived categories of left logarithmic $\shD$-modules
\[f_+\colon D^b(\shD_{X,D})\to D^b(\shD_{Y,E})\]
is given by 
\[f_+(\bullet)=Rf_*(\shD_{f} \stackrel{\bf L}{\otimes}_{\shD_{X,D}} \bullet).\]
We set $D^b_\coh(\shD_{X,D})$ to be the bounded derived categories of left logarithmic $\shD$-modules with coherent cohomology sheaves. 

\begin{theorem}\label{thm:coh}
For a log morphism $f\colon (X,D)\to (Y,E)$, if the morphism $f\colon X\to Y$ is proper, then $f_+$ preserves coherence, that is 
\[f_+\colon D^b_\coh(\shD_{X,D})\to D^b_\coh(\shD_{Y,E}).\]
\end{theorem}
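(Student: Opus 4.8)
The plan is to reduce the coherence statement for log $\shD$-modules to the classical coherence theorem for direct images of $\shD$-modules under proper morphisms, by working with the order filtration and passing to associated graded objects over the log cotangent bundles. First I would recall that it suffices to check the statement on a single coherent module $\cM$ placed in degree zero and, since $f$ is proper, that the question is local on $Y$; thus I may assume $Y$ affine (or even work with a relative chart) and equip $\cM$ with a global coherent filtration $F_\bullet$. The transfer module $\shD_f=\omega_f\otimes f^*\shD_{Y,E}$ carries the filtration $F_\bullet$ induced from the order filtration of $\shD_{Y,E}$, and the filtered tensor product $\shD_f\otimes^{\bf L}_{\shD_{X,D}}\cM$ then has an associated graded that, after applying $\wt{\gr}^F_\bullet$, becomes a complex of coherent sheaves on the fiber product $T^*(X,D)\times_X\bigl(X\times_Y T^*(Y,E)\bigr)$, exactly as in the non-logarithmic setting.

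Next I would set up the two geometric maps over $X$: the canonical projection
\[
\rho_f\colon X\times_Y T^*(Y,E)\longrightarrow T^*(X,D)
\]
coming from the inclusion $f^*\shTA_Y(-\log E)\hookrightarrow \shTA_X(-\log D)$ (which is well defined because $f$ is a log morphism, i.e. $f^{-1}E\subseteq D$), and the projection
\[
\varpi_f\colon X\times_Y T^*(Y,E)\longrightarrow T^*(Y,E).
\]
The standard computation of the associated graded of a filtered direct image — Laumon's formula, which the paper develops in the logarithmic setting in Theorem~\ref{thm:Laumf} — identifies $\gr^F_\bullet f_+(\cM,F_\bullet)$, after sheafifying over the cotangent bundles, with $R\varpi_{f*}\bigl(L\rho_f^*\,\wt{\gr}^F_\bullet\cM\bigr)$ (up to a twist by $\omega_f$ and a shift). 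The key point is then that $L\rho_f^*$ of a coherent sheaf is coherent, and $R\varpi_{f*}$ preserves coherence because $\varpi_f$ is proper: properness of $\varpi_f$ follows from properness of $f\colon X\to Y$ together with the fact that $X\times_Y T^*(Y,E)\to T^*(Y,E)$ is a base change of $f$. Hence each cohomology sheaf of $\gr^F_\bullet f_+\cM$ is coherent over $\sAG_{Y,E}=\gr^F_\bullet\shD_{Y,E}$.

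Finally I would upgrade this graded statement to a filtered (and then plain) coherence statement: a standard argument shows that if a filtered complex over a filtered ring has coherent associated graded cohomology and the filtration is bounded below and exhaustive, then the cohomology of the complex itself is coherent over the filtered ring — here over $\shD_{Y,E}$. Concretely, one chooses locally a coherent filtration on each $\cH^i f_+\cM$ compatible with the induced filtration, observes that the good filtration on the graded level lifts, and invokes the noetherian/coherence properties of $\shD_{Y,E}$ recorded in \S\ref{sec:pre}. This gives $f_+\cM\in D^b_\coh(\shD_{Y,E})$, and running the argument degree by degree on a general object of $D^b_\coh(\shD_{X,D})$ (using the spectral sequence or a truncation induction) yields the theorem.

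The main obstacle I anticipate is establishing the logarithmic Laumon formula cleanly enough to be applied here — that is, verifying that the filtered complex $\shD_f\otimes^{\bf L}_{\shD_{X,D}}\cM$ has the expected associated graded and that its sheafification over the log cotangent bundles is literally $R\varpi_{f*}L\rho_f^*$ of the graded module. Subtleties include the compatibility of the filtration on $\shD_f$ with the bimodule structure, checking that $\rho_f$ is well behaved (it is affine but not a closed immersion, unlike the classical case) so that $L\rho_f^*$ on coherent sheaves stays coherent, and making sure the boundedness of the filtered direct image holds so that the passage from graded coherence to filtered coherence is valid. Since the paper explicitly develops the log Laumon-type direct image formula in Theorem~\ref{thm:Laumf}, in the actual write-up I would simply cite that; the remaining work is the routine filtered-to-unfiltered bootstrapping together with properness of $\varpi_f$.
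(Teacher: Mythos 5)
Your proposal is correct in spirit but takes a genuinely different route from the paper, and as written it has a circularity that you would need to unwind.

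The paper's proof is short and elementary: it first handles induced modules $\cM = \shD_{X,D}\otimes_{\sO_X}\sL$, where $f_+\cM$ reduces via the projection formula to $\shD_{Y,E}\otimes_{\sO_Y} Rf_*(\omega_f\otimes\sL)$ and coherence follows from properness of $f$; it then replaces an arbitrary object of $D^b_\coh(\shD_{X,D})$ by a quasi-isomorphic bounded complex of induced modules (as in B\"jork, Theorem 1.5.8) and inducts on length. No filtrations are needed. Your route instead passes through good filtrations and the Laumon-type formula, i.e.\ the approach of Hotta--Takeuchi--Tanisaki: compute $\gr^F$ of $f_+(\cM,F_\bullet)$ as $R\varpi_{f*}L\rho_f^*$ of $\wt{\gr}^F_\bullet\cM$ over the log cotangent bundles, use properness of $\varpi_f$ (a base change of $f$) to get coherence on the graded level, then bootstrap via the filtered spectral sequence. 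That is a valid alternative, and it buys you the graded formula ``for free'' along the way, whereas the paper's route is more economical if coherence is all you want.

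Two caveats. First, you say you would ``simply cite'' Theorem \ref{thm:Laumf}, but in the paper's logical order Theorem \ref{thm:Laumf} \emph{presupposes} Theorem \ref{thm:coh}: the very statement that $f_+$ lands in $\KF_\coh(\shD_{Y,E})$ is the filtered version of the coherence theorem, and the paper explicitly appeals to the proof of Theorem \ref{thm:coh} to justify it. So citing Theorem \ref{thm:Laumf} here would be circular. What you can legitimately use is Proposition \ref{prop:Laumf} (which is purely about $\sO$-modules on the cotangent bundles) together with the spectral-sequence argument you sketch; you would then be re-proving the relevant part of the filtered theory rather than citing it. Second, a small slip: the map of log tangent sheaves induced by a log morphism goes $\shTA_X(-\log D)\to f^*\shTA_Y(-\log E)$ (the dual of $f^*\Omega^1_Y(\log E)\to\Omega^1_X(\log D)$), not the inclusion in the other direction you wrote. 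The resulting morphism $\rho_f = p_1\colon X\times_Y T^*(Y,E)\to T^*(X,D)$ still goes the way you want; only the justification needs fixing. (As a side remark, your worry about $L\rho_f^*$ preserving coherence is not the real issue --- derived pullback of coherent sheaves between noetherian schemes always has coherent cohomology; the point to check is bounded Tor-dimension of $\rho_f$, which holds since both source and target are smooth.)
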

\begin{proof}
If $\cM=\shD_{X,D}\otimes_{\sO_X}\sL$ for some coherent $\sO_X$-module $\sL$ ($\cM$ is called an induced $\shD_{X,D}$-module), then 
\[f_+\cM\in D^b_\coh(\shD_{Y,E})\]
by projection formula and the standard fact in algebraic geometry that the direct images of $\sO$-modules under proper morphisms are coherent. In general, for an arbitrary $\cM^\bullet\in D^b_\coh(\shD_{X,D})$, using the arguments in the proof of \cite[Theorem 1.5.8]{Bj}, $\cM^\bullet$ is quasi-isomorphic to a complex of induced $\shD_{X,D}$-modules. Then we also have 
\[f_+\cM^\bullet\in D^b_\coh(\shD_{Y,E})\]
by induction on the length of the complex of induced $\shD_{X,D}$-modules. 
\end{proof}

\subsection{Direct images and logarithmic Lagrangian correspondence}
In this subsection, we discuss the direct image functor $f_+$ in the filtered case, which is the logarithmic generalization of Laumon's constructions for $\shD$-modules \cite{Laumon,Laumon1}.

For a log morphism $f\colon (X,D)\to (Y,E)$, we consider the following diagram 
\[\begin{tikzcd}
T^*(X,D)\arrow[dr,"\pi_X"] &  X\times_YT^*(Y,E)\arrow[l,"p_1"] \arrow[r, "p_2"] \arrow[d,"\pi_f"]
    & T^*(Y,E) \arrow[d, "\pi_Y"] \\
 &  X \arrow[r, "f"]
&Y, \end{tikzcd}\]
where the square is Cartesian and $p_1$ is induced by the morphism of logarithmic tangent sheaves $\shTA_{X}(-\log D)\to f^*\shTA_Y(-\log E)$. 

Denoted the bounded derived categories of the graded $\sAG_{X,D}$-modules and $\sO_{T^*(X,D)}$-modules  by $D^b(\sAG_{X,D})$ and $ D^b(\sO_{T^*(X,D)})$ respectively.
We define a functor 
\[f_\sharp\colon D^b(\sO_{T^*(X,D)})\rarr D^b(\sO_{T^*(Y,E)})\]
associated to $f$ by 
\[f_\sharp(\bullet)=Rp_{2*}(Lp_1^*(\bullet)\otimes p_1^*\omega_f).\]
When $f$ is proper, $f_\sharp$ preserves coherence. 

Now, we consider the additive category $\MF(\shD_{X,D})$ whose objects are filtered (left) $\shD_{X,D}$-modules and morphisms are $\shD_{X,D}$-linear morphisms compatible with the order filtration of $\shD_{X,D}$. A sequence of $\MF(\shD_{X,D})$
\[(\cM_1,F_\bullet)\to (\cM_2,F_\bullet)\to (\cM_3,F_\bullet)\]
is called strictly exact if 
\[0\to \grf\cM_1\to \grf\cM_2\to \grf\cM_3\to0\]
is a short exact sequence of $\sAG_{X,D}$-modules. Then $\MF(\shD_{X,D})$ together with the class of all the strictly exact sequences defines an exact categories in the sense of Quillen. Denote the bounded Derived category of $\MF(\shD_{X,D})$ by $\DF^b(\shD_{X,D})$. Then we have an embedding of categories 
\[\MF(\shD_{X,D})\lhook\joinrel\xrightarrow{R(\bullet)} \Mod(R(\shD_{X,D},F_\bullet)),\]
and an induced embedding of (bounded) Derived categories 
\[\DF^b(\shD_{X,D})\hookrightarrow D^b(R(\shD_{X,D},F_\bullet)),\]
where $R(\shD_{X,D},F_\bullet)=\bigoplus_iF_i\shD_{X,D}$, the Rees ring of $(\shD_{X,D},F_\bullet)$, $\Mod(R(\shD_{X,D},F_\bullet))$ the abelian categories of $R(\shD_{X,D},F_\bullet)$-modules and  $D^b(R(\shD_{X,D},F_\bullet))$ the bounded derived categories. A complex $\cM^\bullet\in \DF^b(\shD_{X,D})$ is coherent if its image in $D^b(R(\shD_{X,D},F_\bullet))$ has coherent cohomologies. Denoted by $\DF^b_{\coh}(\shD_{X,D})$ the subcategory of coherent objects. 

The direct image functor can be naturally generalized to the Rees modules case as following:
\[f_+\colon D^b(R(\shD_{X,D},F_\bullet))\to D^b(R(\shD_{Y,E},F_\bullet))\]
\[f_+(\bullet)=Rf_*(R(\shD_f,F_\bullet)) \stackrel{\bf L}{\otimes}_{R(\shD_{X,D},F_\bullet)} \bullet,\]
where the filtration of $\shD_f$ is induced from that of $\shD_{Y,E}$.
By abuse of notations, we denote the direct images functors on different derived categories all by $f_+$. Similar to the unfiltered case, $f_+$ preserves coherence when $f$ is proper (see the proof of Theorem \ref{thm:coh}), that is, we have 
\[f_+\colon D^b_\coh(R(\shD_{X,D},F_\bullet))\to D^b_\coh(R(\shD_{Y,E},F_\bullet)).\]
However, $f_+$ does not preserve strictness, that is, it might not be true that $f_+$ maps $\DF^b(\shD_{X,D})$ into $\DF^b(\shD_{Y,E})$ even when $f$ is proper.

We then define an intermediate functor
\[f^\bullet_+\colon D^b(\sAG_{X, D})\to D^b(\sAG_{Y,E})\]
by 
\[f^\bullet_+(\bullet)=Rf_*(\grf\shD_f \stackrel{\bf L}{\otimes}_{\sAG_{X,D}}\bullet).\]
 
Since $\pi_{-}$ is affine, where $- = X$ or $Y$, we have the functor 
\[\sim_-\colon  D^b(\sAG_{-, -})\to D^b(\sO_{T^*(-,-)}).\]
 
Then we obtain the following proposition:
\begin{prop}\label{prop:Laumf}
Suppose $f\colon (X, D)\to (Y,E)$ is a proper morphism of log smooth pairs. Then we have the following commutative diagram 
\[\begin{tikzcd}
D^b_\coh(\sAG_{X, D})\arrow[r,"f^\bullet_+"]\arrow[d,"\sim_X"]& D^b_\coh(\sAG_{Y,E})\arrow[d,"\sim_Y"]\\
 D^b_\coh(\sO_{T^*(X,D)})\arrow[r,"f_\sharp"] & D^b_\coh(\sO_{T^*(Y,E)}).
\end{tikzcd}\]
\end{prop}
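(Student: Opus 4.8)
The plan is to reduce the statement to a local, affine computation by exploiting that $\pi_X$ and $\pi_Y$ are affine morphisms and that both sides of the square are computed by explicit derived tensor products. First I would unwind both compositions $\sim_Y \circ f^\bullet_+$ and $f_\sharp \circ \sim_X$ as functors on $D^b_\coh(\sAG_{X,D})$. On the one hand, $\sim_Y\bigl(f^\bullet_+(\bullet)\bigr) = \sim_Y\bigl(Rf_*(\grf\shD_f \stackrel{\bf L}{\otimes}_{\sAG_{X,D}} \bullet)\bigr)$; since $\sim_Y$ is exact (pushforward along the affine $\pi_Y$) and commutes with $Rf_*$ up to the base change between $f$ and $\pi_X,\pi_Y$, this becomes $R\pi_{f*}$ applied to the $\sim_X$-image of the tensor product, after identifying $X\times_Y T^*(Y,E)$ as $\Spec$ of $f^*\sAG_{Y,E}$ over $X$. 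On the other hand, $f_\sharp(\sim_X \bullet) = Rp_{2*}\bigl(Lp_1^*(\sim_X\bullet)\otimes p_1^*\omega_f\bigr)$, and since $p_2 = \pi_Y$-base-changed and $\pi_f$ is affine, I can rewrite $Rp_{2*} = R\pi_{Y*}\circ R\pi_{f*}$ precomposed with $p_1$. So the comparison reduces to matching, over $X$, the sheaf $\sim_X$ of $\grf\shD_f \stackrel{\bf L}{\otimes}_{\sAG_{X,D}} \bullet$ with $Lp_1^*(\sim_X\bullet)\otimes p_1^*\omega_f$ as complexes of $\sO_{X\times_Y T^*(Y,E)}$-modules.

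The key identification, then, is that $\grf\shD_f = \omega_f\otimes_{\sO_X} f^*\sAG_{Y,E}$ as a graded $(\sAG_{X,D}, f^{-1}\sAG_{Y,E})$-bimodule — this is just the associated graded of the filtered transfer module $\shD_f = \omega_f\otimes f^*\shD_{Y,E}$, where the filtration comes from the order filtration on $\shD_{Y,E}$, so passing to $\gr^F$ replaces $\shD_{Y,E}$ by $\sAG_{Y,E}$. Consequently $\grf\shD_f \stackrel{\bf L}{\otimes}_{\sAG_{X,D}} \cN^\bullet = \omega_f\otimes_{\sO_X}\bigl(f^*\sAG_{Y,E} \stackrel{\bf L}{\otimes}_{\sAG_{X,D}} \cN^\bullet\bigr)$, and the $\sAG_{X,D}\to f^*\sAG_{Y,E}$ here is precisely the map induced by $\shTA_X(-\log D)\to f^*\shTA_Y(-\log E)$, which is the ring map defining $p_1\colon T^*(X,D)\to X\times_Y T^*(Y,E)$. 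Applying the affine-morphism dictionary $\sim$ turns $f^*\sAG_{Y,E}\stackrel{\bf L}{\otimes}_{\sAG_{X,D}}\cN^\bullet$ into $Lp_1^*(\sim_X \cN^\bullet)$, and turns $\omega_f\otimes_{\sO_X}(-)$ into $(-)\otimes p_1^*\omega_f$ since $\omega_f$ is pulled back from $X$ along $\pi_f\circ p_1$ — or more directly along $p_1$ composed with the projection to $X$. This matches the two complexes on the nose.

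The remaining work is bookkeeping of base change: I need $R\pi_{f*}$ applied to the $\sim_X$-image of $\grf\shD_f\stackrel{\bf L}{\otimes}\bullet$ to coincide with $Rp_{2*}$ of the corresponding complex on $T^*(X,D)$, and this follows because $\sim_Y\circ Rf_* = R\pi_{f*}\circ \sim_{X\times_Y T^*(Y,E)}$ is flat base change along the Cartesian square with $f$ and $\pi_Y$ — the point being that $\pi_Y$ is flat (it is the projection of a vector-bundle-like $\Spec\Sym$) so the base change isomorphism $\pi_Y^* Rf_* \simeq R\pi_{f*}(\pi_X')^*$ holds, and one composes with the affine pushforward $R\pi_{Y*}$ to land in $D^b(\sO_{T^*(Y,E)})$. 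Coherence preservation on both sides when $f$ is proper has already been asserted (it is the "$f_\sharp$ preserves coherence" remark and the analogue of Theorem \ref{thm:coh}), so the functors genuinely land in the coherent subcategories as drawn.

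The main obstacle I anticipate is not any single deep input but the careful handling of the two base-change squares simultaneously — one relating $f$ to its lift $\pi_f$ over $X\times_Y T^*(Y,E)$, and one relating the affine maps $\pi_X,\pi_Y$ to the $\sim$ equivalences — together with checking that the twist by $\omega_f$ and the pullback $p_1^*\omega_f$ are genuinely compatible under these identifications (i.e.\ that $\omega_f$, a sheaf on $X$, pulls back the same way whether one goes through $\pi_f$ then $p_1$, or directly). I would organize this by first proving the statement when $f$ is (i) a closed immersion of log pairs and (ii) a projection $(Y\times Z, E\times F)\to (Y,E)$, then composing, since a proper log morphism factors (after replacing $X$ by a suitable log modification, or working locally) through such pieces, mirroring Laumon's original strategy in the non-log setting. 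In fact, since the claim is about derived categories with no strictness requirement, it suffices to verify it for induced modules $\cN^\bullet = \sAG_{X,D}\otimes_{\sO_X}\sL$ (as in the proof of Theorem \ref{thm:coh}), where everything reduces to the $\sO$-module projection formula and flat base change, and then extend by dévissage along the length of a resolution by induced modules.
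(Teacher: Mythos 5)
The paper states this proposition with no proof, treating it as a routine consequence of the definitions of $f^\bullet_+$ and $f_\sharp$ and the $\sim$-equivalence for affine morphisms; so there is no argument in the paper to compare against, and the question is only whether your verification is correct. It is, in substance. The key identification $\grf\shD_f \simeq \omega_f\otimes_{\sO_X}f^*\sAG_{Y,E}$ is right, since the filtration on $\shD_f=\omega_f\otimes f^*\shD_{Y,E}$ is pulled through the locally free line bundle $\omega_f$ from the order filtration of $\shD_{Y,E}$. Once that is in place, the algebra map $\sAG_{X,D}\to f^*\sAG_{Y,E}$ is exactly the one defining $p_1$, so $\sim_{X\times_Y T^*(Y,E)}$ carries $f^*\sAG_{Y,E}\stackrel{\bf L}{\otimes}_{\sAG_{X,D}}\cN^\bullet$ to $Lp_1^*(\sim_X\cN^\bullet)$, and $\omega_f\otimes_{\sO_X}(-)$ to $\pi_f^*\omega_f\otimes(-)=p_1^*\omega_f\otimes(-)$; combining this with $\sim_Y\circ Rf_* = Rp_{2*}\circ\sim_{X\times_Y T^*(Y,E)}$ (which follows simply from $\pi_Y p_2 = f\pi_f$ together with exactness of the affine pushforwards $\pi_{Y*}$ and $\pi_{f*}$ --- the flat base change you invoke for $\pi_Y$ is also valid but not needed) gives the commutativity of the square on all of $D^b_\coh(\sAG_{X,D})$.

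Two minor slips to correct: $p_1$ goes $X\times_Y T^*(Y,E)\to T^*(X,D)$, not the other way; and where you wrote $R\pi_{f*}\circ\sim_{X\times_Y T^*(Y,E)}$ you meant $Rp_{2*}\circ\sim_{X\times_Y T^*(Y,E)}$. Also, the closing paragraph proposing a factorization of $f$ into a closed immersion and a projection, together with a d\'evissage through induced modules, is superfluous (and the factorization claim would itself need justification for log morphisms): the computation above is natural in $\cN^\bullet$ and already holds for arbitrary objects of the derived category, so no reduction to special cases of $f$ or to induced modules is required.
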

We use $K_\coh(\bullet)$ (resp. $\KF_\coh(\bullet)$) to denote the Grothendieck group of the triangulated category $D^b_\coh(\bullet)$ (resp. $\DF^b_\coh(\bullet)$). We then have a Laumon-type formula for log $\shD$-modules, roughly speaking,
\[\gr\circ f_+\simeq f^\bullet_+\circ\gr.\]
To be more precise: 
\begin{theorem}\label{thm:Laumf}
Suppose $f\colon (X, D)\to (Y,E)$ is a proper morphism of log smooth pairs. Then we have the following commutative diagram 
\[\begin{tikzcd}\KF_\coh(\shD_{X,D})\arrow[r,"f_+"]\arrow[d,"{[\gr]}"]& \KF_\coh(\shD_{Y,E})\arrow[d,"{[\gr]}"]\\
K_\coh(\sAG_{X, D})\arrow[r,"f^\bullet_+"]& K_\coh(\sAG_{Y,E}).
\end{tikzcd}\]
\end{theorem}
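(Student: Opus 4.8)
The plan is to deduce this $K$-theoretic identity from the derived-category statement of Proposition~\ref{prop:Laumf} together with a dévissage to induced modules, mimicking Laumon's original argument for $\shD$-modules adapted to the log setting. First I would show that the class map $[\gr]\colon \KF_\coh(\shD_{X,D})\to K_\coh(\sAG_{X,D})$ is well-defined: for a coherent filtered object $(\cM^\bullet, F_\bullet)$ the associated graded $\grf\cM^\bullet$ is a coherent $\sAG_{X,D}$-complex, and strictly exact sequences in $\MF(\shD_{X,D})$ go to exact triangles in $D^b_\coh(\sAG_{X,D})$ by the very definition of strictness, so the map respects the relations defining the Grothendieck group. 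One also needs that the filtered quasi-isomorphisms inverted in $\DF^b(\shD_{X,D})$ become isomorphisms after $[\gr]$, which again is immediate from strictness. The analogous statement holds on $Y$.

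Next I would reduce to the case of a single filtered induced module. Using the argument in the proof of Theorem~\ref{thm:coh} (following \cite[Theorem 1.5.8]{Bj}), every object of $\DF^b_\coh(\shD_{X,D})$ admits a finite filtered resolution by complexes whose terms are filtered induced modules $\shD_{X,D}\otimes_{\sO_X}\sL$ with $\sL$ a coherent $\sO_X$-module carrying the induced filtration; such a resolution expresses the class in $\KF_\coh(\shD_{X,D})$ as an alternating sum of classes of induced modules. Since both $f_+$ and $f^\bullet_+$ are exact functors of triangulated categories (hence additive on Grothendieck groups) and $[\gr]$ is additive, it suffices to check the square commutes on the class of a single induced $(\cM,F_\bullet)=(\shD_{X,D}\otimes_{\sO_X}\sL, F_\bullet)$.

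For an induced module the two routes can be computed explicitly. On one hand, $\grf(\shD_{X,D}\otimes_{\sO_X}\sL)\simeq \sAG_{X,D}\otimes_{\sO_X}\gr^F\sL$, which for the standard induced filtration is just $\pi_X^*\sL$ viewed as a graded $\sAG_{X,D}$-module (placed in the appropriate degrees). On the other hand, $R(\shD_f,F_\bullet)\otimes^{\bf L}_{R(\shD_{X,D},F_\bullet)}R(\cM,F_\bullet)$ is, for an induced module, concentrated in a single (co)homological degree — the higher Tor's vanish because $R(\shD_f,F_\bullet)$ is locally free over $R(\shD_{X,D},F_\bullet)$ — and equals $R(\omega_f,F)\otimes_{\sO_X}f^*R(\shD_{Y,E},F_\bullet)\otimes_{\sO_X}\sL$, a \emph{strict} complex. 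Hence $f_+(\cM,F_\bullet)$ is already strict, its class in $\KF_\coh(\shD_{Y,E})$ is computed termwise by $Rf_*$ of this induced $R(\shD_{Y,E},F_\bullet)$-module, and applying $[\gr]$ commutes with $Rf_*$ and with the (flat) tensor factors by the projection formula. Comparing with $f^\bullet_+[\gr](\cM,F_\bullet) = Rf_*(\grf\shD_f\otimes_{\sAG_{X,D}}\pi_X^*\sL)$ and using $\grf\shD_f\simeq \omega_f\otimes f^*\sAG_{Y,E}$ gives the equality of classes in $K_\coh(\sAG_{Y,E})$. The compatibility with $\sim_X,\sim_Y$ recorded in Proposition~\ref{prop:Laumf} can be invoked here to transport the identity to the sheaf-theoretic side if desired, but for the stated $K$-theoretic equality it is not strictly needed.

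\textbf{Main obstacle.} The delicate point is the interchange of $[\gr]$ with $f_+$ at the level of Grothendieck groups, because $f_+$ is \emph{not} strict in general (as emphasized in the text): for a general coherent filtered complex, $f_+(\cM^\bullet,F_\bullet)$ need not lie in $\DF^b(\shD_{Y,E})$, so $[\gr]\circ f_+$ has to be interpreted via the Rees-module picture $D^b_\coh(R(\shD_{Y,E},F_\bullet))$ and then specializing at the parameter, and one must check this specialization is compatible with taking alternating sums of cohomology. The dévissage to induced modules is precisely what circumvents this: on induced modules strictness is automatic, so all functors in sight are exact and the identity propagates to the whole Grothendieck group by additivity. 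Verifying that the induced-module resolution can be chosen \emph{filtered} and that the filtered Tor-vanishing genuinely holds (i.e. that $R(\shD_f,F_\bullet)$ is locally free as a right $R(\shD_{X,D},F_\bullet)$-module, which follows from the local description of $\shTA_X(-\log D)\to f^*\shTA_Y(-\log E)$) is the main technical content to be carried out carefully.
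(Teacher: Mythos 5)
Your proposal takes a genuinely different route from the paper. The paper proves the statement directly for an arbitrary filtered coherent complex: it uses the $t$-structure on $\DF^b(\shD_{Y,E})$ to define non-cohomological functors $H^n$, shows $[\grf H^0(\eta)]=[\gr\eta]$ in $K_\coh(\sAG_{Y,E})$ (citing Laumon's Lemme 3.5.13(iii)), and then feeds this into the convergence of the spectral sequence of the filtered complex $f_+(\cM^\bullet,F_\bullet)$. No reduction to induced modules occurs; the non-strictness of $f_+$ is absorbed into the spectral-sequence bookkeeping. Your dévissage-to-induced-modules strategy is closer in spirit to Laumon's reduction arguments elsewhere in his paper, and it does have the virtue of replacing the spectral-sequence analysis by an explicit computation on induced modules.

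That said, the central technical claim you make to justify the dévissage is incorrect as stated. You assert that the higher $\mathrm{Tor}$'s vanish (and strictness is automatic) ``because $R(\shD_f,F_\bullet)$ is locally free over $R(\shD_{X,D},F_\bullet)$,'' and you repeat this as the ``main technical content to be carried out carefully.'' But $\shD_f = \omega_f\otimes f^*\shD_{Y,E}$ is \emph{not} locally free as a right $\shD_{X,D}$-module in general. This already fails for a projection $f\colon X = Y\times Z \to Y$ (with trivial log structure, say): there $\shD_f\simeq \omega_Z\boxtimes \shD_Y$, and $\omega_Z$ is not free over $\shD_Z$. The Tor-vanishing you want \emph{is} true, but for a different reason: for an induced module $\cM=\shD_{X,D}\otimes_{\sO_X}\sL$, take an $\sO_X$-locally-free resolution $P_\bullet\to\sL$, so that $R(\shD_{X,D})\otimes_{\sO_X}P_\bullet\to R(\cM)$ is a free $R(\shD_{X,D})$-resolution (using flatness of $R(\shD_{X,D})$ over $\sO_X$); tensoring with $R(\shD_f)$ over $R(\shD_{X,D})$ then gives $R(\shD_f)\otimes_{\sO_X}P_\bullet$, which stays exact in positive degrees because $R(\shD_f)$ is locally free over $\sO_X$ (not over $R(\shD_{X,D})$). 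So the fact you need is local freeness over $\sO_X$, not over $R(\shD_{X,D})$; with the wrong justification in place, a reader cannot tell whether the dévissage actually circumvents the non-strictness you identify as the main obstacle.

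A second point that needs more care: even granting that $f_+$ of a single induced filtered module is strict (which does hold, via the projection formula $Rf_*(R(\shD_f)\otimes_{\sO_X}\sL)\simeq Rf_*(\omega_f\otimes_{\sO_X}\sL)\otimes_{\sO_Y}R(\shD_{Y,E})$ and flatness of $R(\shD_{Y,E})$ over $\sO_Y$), the reduction step ``express $[\cM^\bullet]$ as an alternating sum of classes of induced modules and extend by additivity'' silently assumes that $[\gr]\circ f_+$ is a well-defined additive map on $\KF_\coh(\shD_{X,D})$. Since $f_+$ does not map $\DF^b$ into $\DF^b$ in general, that well-definedness is precisely what has to be established — you cannot simply invoke additivity of a map before you know it exists. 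The clean fix is to work throughout in the Rees-module Grothendieck group $K(D^b_\coh(R(\shD_{-,-},F_\bullet)))$, where $f_+$ and $[\gr]$ are honest exact functors of triangulated categories, and only at the end compare with $\KF_\coh(\shD_{Y,E})$ via the alternating sum over cohomologies with their induced filtrations; that comparison is exactly what the paper's spectral-sequence argument supplies, and your sketch does not yet replace it.
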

\begin{proof}
For a filtered complex $(\cM^\bullet, F_\bullet)\in \DF^b(\shD_{X,D})$,  $f_+(\cM^\bullet, F_\bullet)$ is a filtered complex. The associated spectral sequence converges and for every $p\in \Z$
$$E_r^p=\bigoplus_q E_r^{p,q}\simeq \grf\sH^pf_+(\cM^\bullet)$$
with the induced filtration on $\sH^pf_+(\cM^\bullet)$ when $r\gg0$. 

Take the natural $t$-structure on $\DF^b(\shD_{Y,E})$ (use the usual truncation functor), and denote the heart by $\sC$ and the associated $n$-th cohomology functor by $H^n_t$. Then the objects of $\sC$ are 2-complexes of injective morphisms (not necessarily strict) of filtered $\shD_{Y,E}$-modules
$$\eta\colon(\cM^{-1}, F_\bullet)\stackrel{d^{-1}}{\rarr} (\cM^0,F_\bullet).$$ 
Define a functor from $\sC$ to $\MF(\shD_{Y,E})$ by 
$$H^0(\eta)=\Coker(\Coim d^{-1}\to \Ker d^0).$$ 
It induces additive functors $H^n=H^0\circ H^n_t$ on $DF^b(\shD_{Y,E})$; they are not necessarily cohomological. Then one can check 
\begin{equation}\label{eq:comp1}
    [\grf H^0(\eta)]=[\gr\eta]
\end{equation}
in $K_\coh(\sAG_{Y,E})$ (see for instance \cite[Lemme 3.5.13 (iii)]{Laumon}).

On the other hand, consider the spectral sequence associated to $\eta$. By convergency of the spectral sequence we have that for $r\gg0$ 
\begin{equation}\label{eq:comp2}
    E^0_r=\grf H^0(\eta).
\end{equation}
Combining \eqref{eq:comp1} and \eqref{eq:comp2}, the proof is accomplished, thanks to the fact that taking limit of spectral sequences and truncation operations commute.  
\end{proof}
By combining Proposition \ref{prop:Laumf} and Theorem \ref{thm:Laumf}, we then immediately have:
\begin{coro}\label{cor:imageK}
Suppose $f\colon (X, D)\to (Y,E)$ is a proper morphism of log smooth pairs. Then for $(\cM^\bullet, F_\bullet)\in \DF^b_\coh(\shD_{X,D})$,
\[f_\sharp[\widetilde{\gr}^F_\bullet \cM^\bullet]=\sum_i[(-1)^i\widetilde{\gr}^F_\bullet (\sH^if_+(\cM^\bullet))]\]
in $K_\coh(\sO_{T^*(Y,E)})$. 
\end{coro}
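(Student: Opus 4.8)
The plan is to deduce Corollary~\ref{cor:imageK} by chaining together Proposition~\ref{prop:Laumf} and Theorem~\ref{thm:Laumf}, translating the identity at the level of Grothendieck groups into the desired class equality in $K_\coh(\sO_{T^*(Y,E)})$. First I would start with a filtered complex $(\cM^\bullet,F_\bullet)\in\DF^b_\coh(\shD_{X,D})$ and consider its class $[(\cM^\bullet,F_\bullet)]\in\KF_\coh(\shD_{X,D})$. Applying the left vertical map $[\gr]$ in Theorem~\ref{thm:Laumf} sends this to $[\grf\cM^\bullet]\in K_\coh(\sAG_{X,D})$; by definition of the class of a complex in the Grothendieck group, this is $\sum_i(-1)^i[\grf\sH^i(\cM^\bullet)]$, but since $\cM^\bullet$ is already a complex we can simply work with the class of the associated graded complex directly. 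Commutativity of the square in Theorem~\ref{thm:Laumf} then gives
\[
[\gr]\bigl(f_+[(\cM^\bullet,F_\bullet)]\bigr)=f^\bullet_+\bigl([\grf\cM^\bullet]\bigr)
\]
in $K_\coh(\sAG_{Y,E})$.

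Next I would unwind the left-hand side. By definition of the functor $f_+$ on $\DF^b_\coh$ (equivalently, its image in $D^b_\coh(R(\shD_{Y,E},F_\bullet))$), the class $f_+[(\cM^\bullet,F_\bullet)]$ equals $\sum_i(-1)^i[\sH^if_+(\cM^\bullet,F_\bullet)]$, where each $\sH^if_+(\cM^\bullet)$ carries the induced filtration coming from the convergent spectral sequence discussed in the proof of Theorem~\ref{thm:Laumf} (this is exactly the content of the $E_r$-computation there, identifying $\grf\sH^pf_+(\cM^\bullet)$ with the limit term of the spectral sequence). Applying $[\gr]$ to this alternating sum and using additivity yields
\[
[\gr]\bigl(f_+[(\cM^\bullet,F_\bullet)]\bigr)=\sum_i(-1)^i[\grf\sH^if_+(\cM^\bullet)]\in K_\coh(\sAG_{Y,E}).
\]
Combining with the previous display gives the identity $f^\bullet_+[\grf\cM^\bullet]=\sum_i(-1)^i[\grf\sH^if_+(\cM^\bullet)]$ in $K_\coh(\sAG_{Y,E})$.

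Finally I would transport this equality through the affine direct image functor $\sim_Y\colon D^b_\coh(\sAG_{Y,E})\to D^b_\coh(\sO_{T^*(Y,E)})$, which is exact (as $\pi_Y$ is affine) and hence induces a homomorphism on Grothendieck groups $K_\coh(\sAG_{Y,E})\to K_\coh(\sO_{T^*(Y,E)})$ sending $[\shN]\mapsto[\widetilde{\shN}]$. Applying it to both sides of the identity above converts $[\grf\sH^if_+(\cM^\bullet)]$ into $[\widetilde{\gr}^F_\bullet\sH^if_+(\cM^\bullet)]$, and converts $f^\bullet_+[\grf\cM^\bullet]$ into $\sim_Y\circ f^\bullet_+[\grf\cM^\bullet]$. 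By Proposition~\ref{prop:Laumf}, the square relating $f^\bullet_+$ and $f_\sharp$ via $\sim_X$ and $\sim_Y$ commutes, so $\sim_Y\circ f^\bullet_+[\grf\cM^\bullet]=f_\sharp\circ\sim_X[\grf\cM^\bullet]=f_\sharp[\widetilde{\gr}^F_\bullet\cM^\bullet]$. This yields precisely
\[
f_\sharp[\widetilde{\gr}^F_\bullet\cM^\bullet]=\sum_i(-1)^i[\widetilde{\gr}^F_\bullet\sH^if_+(\cM^\bullet)]
\]
in $K_\coh(\sO_{T^*(Y,E)})$, as claimed.

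The routine parts are the bookkeeping: checking that $[\gr]$ and $\sim_Y$ are well-defined on Grothendieck groups (exactness/additivity) and that the class of a complex equals the alternating sum of the classes of its cohomology — both standard. The only genuine subtlety, which is really already absorbed into Theorem~\ref{thm:Laumf}, is the non-strictness of $f_+$: one must take the \emph{induced} filtration on the cohomology sheaves $\sH^if_+(\cM^\bullet)$ (equivalently, the spectral-sequence limit filtration) rather than any naive filtration, and it is precisely because Theorem~\ref{thm:Laumf} is phrased at the level of $K$-groups — where the spectral-sequence degeneration contributes nothing — that this ambiguity does no harm. So I expect no new obstacle here; the corollary is a formal consequence of the two cited results once the definitions of the maps on $K$-groups are spelled out.
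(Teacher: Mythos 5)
Your proposal is correct and follows essentially the same route the paper intends: the paper itself introduces the corollary with ``By combining Proposition~\ref{prop:Laumf} and Theorem~\ref{thm:Laumf}, we then immediately have,'' and your argument simply spells out that combination — commutativity of the square in Theorem~\ref{thm:Laumf}, then pushing through the exact affine direct image $\sim_Y$ and using the square in Proposition~\ref{prop:Laumf}. You also correctly flag the one genuine subtlety (that $f_+$ need not preserve strictness, so the filtrations on $\sH^if_+(\cM^\bullet)$ must be the spectral-sequence–induced ones, which is precisely what the proof of Theorem~\ref{thm:Laumf} packages into the $K$-group statement), so nothing is missing.
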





\section{Logarithmic comparisons for lattices}\label{sec:bfs}

\subsection{Sabbah's multi-filtrations and generalized Bernstein-Sato polynomials}\label{sec:sabreview}

In this subsection, we review the result from \cite{Sab} about multi-filtrations of coherent $\shD$-modules. 



Let $Y$ be a smooth algebraic variety (or more generally, a complex manifold) and $H \subset Y$ a smooth hypersurface. The \emph{Kashiwara-Malgrange filtration} $\{V_i\shD_Y\}_{i\in \Z}$ on $\shD_Y$ along $H$ is an increasing filtration defined by: 
\[V_i\shD_{Y}=\{P\in \shD_Y|P\cdot I^j\subseteq I^{j-i} \textup{ for }\forall j\in \Z\},\]
where $I$ is the ideal sheaf of $H$ and $I^j=\sO_Y$ for $j\le0$. 


For simple normal crossing (SNC) divisors, there is a notion of multi-filtration on $\shD_X$ and local good coherent $\shD_X$-modules. Working locally, we may assume that $X=\Delta^n$ is the $n$-dimensional polydisc with coordinates $(x_1,\dots,x_n)$, and smooth divisor $D_l=(x_l=0)$ for $l=1,2,\dots,k$ and $k\le n$. 
We write $^j V_\bullet\shD_X$ the Kashiwara-Malgrange filtration of $\shD_X$ along $D_j$, and then set for $\bs=(s_1,\dots,s_k)\in \Z^k$
\[V_\bs\shD_X=\bigcap_{j=1}^k {}^j V_{s_i}\shD_X.\]
We then obtain the (multi-indexed) $k$-filtration $\{V_\bullet\shD_X\}_{\Z^k}$. We write the associated Rees ring by
\[R_V(\shD_X)=\bigoplus_{\bs\in \Z^k}V_\bs\shD_X \cdot u^\bs; \quad u^\bs = u_1^{s_1} \cdots u_k^{s_k}\]
one can check that $R_V(\shD_X)$ is a (graded) coherent and Noetherian sheaf of rings. 

We fix a $\shD_X$-module $\widetilde{\cM}$ and consider a $k$-filtration $\{U_\bullet \wt \cM\}_{\Z^k}$ compatible with $\{V_\bullet\shD_X\}_{\Z^k}$. The filtration $\{U_\bullet\widetilde\cM\}_{\Z^k}$ is called good if the Rees module 
\[ R_U(\widetilde{\cM}) = \bigoplus_{\bs\in \Z^k}U_\bs \cM \cdot u^\bs;  \]
is coherent over $R_V(\shD_X)$. It is worth mentioning that if $\{U_\bullet\widetilde\cM\}_{\Z^k}$ is good, then $\widetilde{\cM}$ is coherent and conversely, if $\widetilde\cM$ is coherent, then good $k$-filtrations exist locally. For notation simplicity, we denote $U_\bs \wt \cM$ as $U_\bs$.

As remarked in \cite{Sab},  the subtlety with multi-filtration on module (as in contrast with $\shD_X$) is that in general
\[ U_{\bs} \subsetneq {}^1 U_{s_1} \cap \cdots \cap {}^k U_{s_k},  \text{ where } \quad ^j U_{s'_j} = \bigcup_{\bs \in \Z^k s_j = s'_j} U_\bs. \]
For example, for $k=2$, we do not have $U_{(0,0)} = U_{(0,1)} \cap U_{(1,0)}$ in general.  

It is precisely for this reason, that Sabbah introduces a refined filtration with respect to a cone $\Gamma$. To give a precise definition, we introduce the following notation
\[ N = \Z^k, \quad N^+ = (\Z_{\geq 0})^n, \quad N_\Q = N \otimes \Q, \quad N_\Q^+ = (\Q_{\geq 0})^k \] 
Let $M$ be the dual lattice of $N$, and define $M_\Q$, $M_\Q^+$ accordingly. Let $\Gamma \subset N^+_\Q$ be a unimodular simplicial cone contained in the positive 
quadrant of $N_\Q$, such that the primitive generators of the rays in $\Gamma$ forms part of a $\Z$-basis of the $N$. We use $\sL(\Gamma)$ to denote these primitive generators. We also define the dual cone \[ \check{\Gamma} = \{m \in M | \langle m, v \rangle \geq 0, \forall v \in \Gamma\}, \]
and the annihilator of $\Gamma$ in $M$ by $\Gamma^\perp$. We denote a partial ordering on $M$ induced by $\Gamma$ by 
\[ s \leq_\Gamma s' \LRA  s' - s \in \check{\Gamma}, \]
and we say
\[ s <_\Gamma s' \LRA  s \leq_\Gamma s' \text{ but not } s' \leq_\Gamma s. \]

\begin{definition}
Let $U_\bu$ be a good $k$-filtration of $\wt \cM$ with respect to $V_\bu \shD_X$. Let $\Gamma$ be a $k'$-dimensional unimodular simplicial cone contained in the positive quadrant. For any element $s \in M$, we define a new $k$-filtration by \[ {}^\Gamma U_s = \sum_{s' \leq_\Gamma s} U_{s'}.\]
\end{definition}
Note that if $k' < k$, then ${}^\Gamma U_s$ only depends on the image of $s$ in $M / \Gamma^\perp$. Hence, in the special case that $L$ is a one-dimensional cone (or abusing notation, a primitive generator of this cone), we write ${}^L U_\lambda$ for the $\Z$-indexed filtration, where $\lambda \in \Z \cong M / L^\perp$. 

Let $R_\Gamma(\wt \cM) = \bigoplus_{s \in M} {}^\Gamma U_s u^s$ denote the Rees module for  ${}^\Gamma U_\bu$. There is a natural $\C[\check{\Gamma} \cap M]$ action on 
$R_\Gamma(\wt \cM)$, namely for $s \in \check{\Gamma}$. We recall the following property.
\begin{lemma}\cite[Lemme 2.2.2]{Sab}
If $R_\Gamma(\wt \cM)$ is a flat $\C[\check{\Gamma} \cap M]$-module, then for all $s \in M$, we have
\[ {}^\Gamma U_s = \bigcap_{L \in \sL(\Gamma)} {}^L U_{L(s)}. \]
\end{lemma}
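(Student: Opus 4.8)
The plan is to prove the inclusion ``$\subseteq$'' directly from the definitions and then obtain ``$\supseteq$'' by exploiting flatness. First I would unwind what the two sides mean. For $L \in \sL(\Gamma)$, the filtration ${}^L U_\bullet$ is by definition $\Z$-indexed via $\lambda \mapsto {}^L U_\lambda = \sum_{L(s') \le \lambda} U_{s'}$; an element of ${}^\Gamma U_s$ is a finite sum of sections lying in various $U_{s'}$ with $s' \le_\Gamma s$, i.e. $s - s' \in \check\Gamma$, which forces $L(s') \le L(s)$ for every ray $L$ (since $L \in \Gamma$ and $s - s' \in \check\Gamma$ means $\langle s-s', L\rangle \ge 0$). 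Hence each such $U_{s'}$ sits inside ${}^L U_{L(s)}$, giving ${}^\Gamma U_s \subseteq \bigcap_{L} {}^L U_{L(s)}$. This half uses nothing about flatness.

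For the reverse inclusion, the idea is to read the right-hand side as a statement about the Rees module and use the flatness hypothesis over $R = \C[\check\Gamma \cap M]$. Since $\Gamma$ is unimodular and its rays form part of a $\Z$-basis, after a change of basis we may assume $\sL(\Gamma) = \{e_1,\dots,e_{k'}\}$, so $\check\Gamma \cap M$ is freely generated by $e_1^*,\dots,e_{k'}^*$ together with $\pm e_{k'+1}^*,\dots,\pm e_k^*$, i.e. $R \cong \C[x_1,\dots,x_{k'}][y_{k'+1}^{\pm},\dots,y_k^{\pm}]$ with $x_i$ acting as the shift $u^{e_i^*}$. The claim ${}^\Gamma U_s = \bigcap_L {}^L U_{L(s)}$ then becomes: the degree-$s$ piece of $R_\Gamma(\wt\cM)$ equals the intersection of the images of the multiplication maps $x_i \colon R_\Gamma(\wt\cM)_{s - e_i^*} \to R_\Gamma(\wt\cM)_s$ over $i = 1,\dots,k'$ — more precisely that a section lying in ${}^L U_{L(s)}$ for all $L$ already lies in ${}^\Gamma U_s$. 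The plan is to induct on $k'$. For $k'=1$ there is nothing to prove. For the inductive step, write $\Gamma = \Gamma' + L_{k'}$ where $\Gamma'$ is spanned by the first $k'-1$ rays; by induction applied to $\Gamma'$ (whose Rees module is a localization/base change of $R_\Gamma(\wt\cM)$ and hence still flat over $\C[\check\Gamma' \cap M]$), one reduces to a two-ray statement, which is where flatness genuinely enters: flatness of $R_\Gamma(\wt\cM)$ over $\C[x_1,\dots,x_{k'}]$ means the Koszul complex on $x_1,\dots,x_{k'}$ is a resolution, and in particular for a regular sequence the intersection $x_1 R \cap \cdots \cap x_{k'} R$ of the principal images (in the appropriate graded degree) is exactly the image of the product, which translates back into ${}^\Gamma U_s$. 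This is essentially \cite[Lemme 2.2.2]{Sab} and I would follow that argument.

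Concretely, the key steps in order: (1) normalize coordinates so $\sL(\Gamma)$ is a coordinate basis and identify $R = \C[\check\Gamma\cap M]$ explicitly; (2) prove the easy inclusion ${}^\Gamma U_s \subseteq \bigcap_L {}^L U_{L(s)}$ from the definition of $\le_\Gamma$; (3) reformulate the reverse inclusion as the assertion that, in each graded degree, the intersection of the images of multiplication by the generators $x_i$ of the polynomial part of $R$ equals the image of multiplication by their product; (4) deduce that assertion from flatness, via the Koszul complex / the fact that $x_1,\dots,x_{k'}$ act as a regular sequence on a flat module and $R/(x_1,\dots,x_{k'})$-regularity gives the needed exactness, with an induction on $k'$ handling the passage from two factors to many.

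I expect the main obstacle to be step (4): making the translation between ``a section of $\wt\cM$ lies in ${}^L U_{L(s)}$ for every ray $L$'' and a clean algebraic statement about the Rees module precise enough that flatness can be applied, and in particular handling the fact that the different ${}^L U$ live in different gradings that must be matched up correctly when intersecting. The bookkeeping with the lattice $M$, the dual cone, and the degrees $u^s$ is where errors are easiest to make; once the problem is phrased as ``images of a regular sequence of multiplication operators on a flat module intersect in the image of the product,'' the homological input is standard. Since the statement is quoted verbatim as \cite[Lemme 2.2.2]{Sab}, in the paper itself I would simply cite Sabbah; the above is how I would reconstruct the proof if needed.
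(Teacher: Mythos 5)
The paper itself does not prove this lemma; as the citation in the lemma header indicates, it simply invokes \cite[Lemme 2.2.2]{Sab}, which is exactly what you conclude you would do in the end. Your sketch reconstruction of Sabbah's argument is also essentially correct: the inclusion ${}^\Gamma U_s \subseteq \bigcap_L {}^L U_{L(s)}$ is formal from $\langle s-s', L\rangle \ge 0$, and for the converse the normalization $\sL(\Gamma)=\{e_1,\dots,e_{k'}\}$, the identification of $\C[\check\Gamma\cap M]$ as a polynomial ring $\C[x_1,\dots,x_{k'}]$ over a Laurent ring, and the reduction to $\bigcap_i x_i R_\Gamma(\wt\cM) = \bigl(\prod_i x_i\bigr) R_\Gamma(\wt\cM)$ via the regular-sequence/Koszul consequence of flatness are the correct ingredients.

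The one step you flag as error-prone is indeed the genuine gap that needs spelling out. The regular-sequence argument, read off in a fixed graded degree, yields only the ``corner'' identity $\bigcap_{i\in S}{}^\Gamma U_{s-e_i^*} = {}^\Gamma U_{s-\sum_{i\in S}e_i^*}$. But the right-hand side of the lemma is $\bigcap_L {}^L U_{L(s)}$, and each ${}^L U_{L(s)}$ is an \emph{unbounded increasing union} of the ${}^\Gamma U_t$ over $t \geq_\Gamma s$ with $L(t)=L(s)$. To finish, one must observe that any element $m$ of the intersection already lies in some finite $\bigcap_L {}^\Gamma U_{t^{(L)}}$ with $t^{(L)}\geq_\Gamma s$ and $L(t^{(L)})=L(s)$, and then iterate the corner identity to descend from the coordinatewise minimum of the $t^{(L)}$ (which equals $s$ in each $\Gamma$-coordinate) down to ${}^\Gamma U_s$. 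This exhaustion-plus-iteration is routine but is precisely what makes the proof work; with it filled in, your plan is a faithful account of Sabbah's argument.
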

If ${}^\Gamma U_\bu$ satisfies the flatness condition, we call such cone $\Gamma$ {\em adapted} to $U_\bu$. 
In general, the standard cone $M^+_\Q$ is not adapted to $U_\bu$. However Sabbah shows one may subdivide the standard cone to get an {\em adapted fan} $\Sigma$, that is every cone in $\Sigma$ is adapted. 

Let $\sL(\Sigma)$ denote the set of rays in $\Sigma$. We also define a new $k$-filtration by
\[\overline{U}_\bs=\bigcap_{L\in \sL(\Sigma)}\lup LU_{L(\bs)}\]
for every $\bs\in \Z^k$.  This is called the saturation of $U_\bullet\widetilde\cM$ with respect to $\Gamma$. The saturation filtration is also good provided that $U_\bullet\widetilde\cM$ is good (see \cite[Proposition-D\'efinition 2.2.3]{Sab}). By definition, we have for every $\bs\in \Z^k$
\[U_{\bs}\subseteq \overline{U}_\bs;\]
the $k$-filtration $U_\bullet\widetilde{\cM}$ is called saturated if the above inclusion is an equality. 

The following beautiful theorem of Sabbah \cite[Th\'eor\`em de Bernstein]{Sab} is a natural generalization of the existence of the Bernstein-Sato polynomials for regular functions. 
\begin{theorem}[Sabbah]\label{thm:Sabbf}
Suppose that $U_\bullet\widetilde{\cM}$ is a good $k$-filtration. Then for every primitive vector $L \in N^+$ in the first quadrant, there exists a polynomial of one-variable $b_L(s)\in \bbC[s]$, such that for every $\lambda\in \Z$ \footnote{We suspect there is a typo in \cite[3.1.1. Th\'eor\`em de Bernstein]{Sab}, missing a $+\lambda)$ in the formula $b_L( L(...) {}^L U_\lambda $. Also, we use operator $x_i \partial_{x_i}$ instead of $\partial_{x_i} x_i$, resulting a possible difference in coefficients in $b_L$. }
\[b_L(L(x_1\partial_{x_1},\dots,x_k\partial_{x_k})+\lambda)\lup LU_{\lambda}\subseteq \lup LU_{\lambda-1}.\]

\end{theorem}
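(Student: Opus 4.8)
The plan is to deduce Sabbah's one-variable Bernstein theorem (Theorem~\ref{thm:Sabbf}) from the multi-filtered structure and the Noetherian property of the Rees ring $R_V(\shD_X)$. First I would pass to the cone $L = \la L\ra$ spanned by the primitive vector $L \in N^+$, and consider the associated $\Z$-indexed filtration $\{{}^L U_\lambda\}_{\lambda \in \Z}$; by definition ${}^L U_\lambda = \sum_{s' \leq_L s} U_{s'}$ with $L(s) = \lambda$, so it is an exhaustive increasing $\Z$-filtration of $\wt\cM$. The key observation is that $R_L(\wt\cM) = \bigoplus_\lambda {}^L U_\lambda u^\lambda$ is coherent over the Rees ring $R_L(\shD_X) = \bigoplus_\lambda {}^L V_\lambda\shD_X \cdot u^\lambda$ along the single (possibly non-coordinate) hypersurface direction determined by $L$: this follows because $\{U_\bu\wt\cM\}$ is a good $k$-filtration, so its Rees module is $R_V(\shD_X)$-coherent, and summing over the hyperplanes $L(s) = \lambda$ is a finite-type operation that preserves coherence over the coarser Rees ring. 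Here one uses that $L$ being part of a $\Z$-basis of $N$ means the linear form $L(x_1\partial_{x_1},\dots,x_k\partial_{x_k})$ behaves like a single Euler operator in an adapted coordinate system.

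Next I would invoke the classical one-variable argument, exactly as in the proof of existence of the ordinary $b$-function from the $V$-filtration. Namely, the graded pieces ${}^L U_\lambda / {}^L U_{\lambda-1}$ assemble into a graded module $\gr^{{}^L U}\wt\cM = \bigoplus_\lambda {}^L U_\lambda/{}^L U_{\lambda-1}$ over $\gr^{{}^L V}\shD_X$, and the operator $\theta_L := L(x_1\partial_{x_1},\dots,x_k\partial_{x_k})$ acts on each graded piece. Coherence of the Rees module over the Noetherian Rees ring forces $\gr^{{}^L U}\wt\cM$ to be a finitely generated graded $\gr^{{}^L V}\shD_X$-module; restricting to a relatively compact open, one shows that $\theta_L + \lambda$ acts on ${}^L U_\lambda/{}^L U_{\lambda-1}$ with a characteristic (minimal) polynomial whose roots are, up to a shift, independent of $\lambda$ — this is the standard translation-invariance of the $V$-grading. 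Collecting these minimal polynomials over the finitely many relevant graded degrees (using coherence again to bound the range), one produces a single polynomial $b_L(s) \in \bbC[s]$ that annihilates every ${}^L U_\lambda/{}^L U_{\lambda-1}$, which is exactly the containment $b_L(\theta_L + \lambda)\,{}^L U_\lambda \subseteq {}^L U_{\lambda-1}$.

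The main obstacle I expect is the subtlety Sabbah himself emphasizes (and which is quoted in the excerpt): because on a module $U_\bs \subsetneq \bigcap_j {}^j U_{s_j}$ in general, one cannot simply reduce to the coordinate case naively; one must work with the cone $L$ and the partial order $\leq_L$ and verify that the relevant Rees module over $R_L(\shD_X)$ is still coherent despite the one-dimensional cone $\la L\ra$ not necessarily being a coordinate ray. The delicate point is ensuring that the linear form $L$ being primitive and extendable to a $\Z$-basis of $N$ genuinely lets one treat $\theta_L$ as a single Euler-type operator — i.e., that there is a (possibly non-toric) change of the multi-grading after which ${}^L U$ becomes an honest $V$-filtration along one smooth hypersurface, so the classical existence theorem for $b$-functions (via Noetherianity of $R_L(\shD_X)$) applies verbatim. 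Once that reduction is secured, the remainder is the routine finite-generation-plus-graded-eigenvalue argument; I would not grind through it but would cite the analogue of \cite[3.1.1]{Sab} for the precise coefficient bookkeeping (noting, as the footnote does, the $+\lambda$ normalization and the use of $x_i\partial_{x_i}$ rather than $\partial_{x_i} x_i$).
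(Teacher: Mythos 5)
The paper does not reprove this statement; it is quoted from Sabbah's work (\cite[3.1.1, Th\'eor\`em de Bernstein]{Sab}), so there is no in-paper proof to compare against. Your sketch identifies a plausible skeleton -- pass to the $\Z$-graded Rees module in the direction $L$, argue coherence of that coarsened Rees module, and then extract a $b$-function from a Noetherian graded-eigenvalue argument -- but the two steps you treat as routine are exactly where the technical content of Sabbah's theorem lies, and as written they have genuine gaps.

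The coherence of $R_L(\wt\cM)=\bigoplus_\lambda {}^L U_\lambda\, v^\lambda$ over $R_L(\shD_X)$ is not ``a finite-type operation that preserves coherence.'' The coarsening ${}^L U_\lambda=\sum_{L(\bs)\le\lambda} U_{\bs}$ is a sum over an unbounded half-space of multi-indices, and coherence of the $\Z^k$-graded Rees module $R_U(\wt\cM)$ over $R_V(\shD_X)$ does not formally descend to this $\Z$-graded quotient grading. This is precisely the content Sabbah's adapted-fan machinery supplies: one must subdivide the positive quadrant into a fan adapted to $U_\bullet$ before the ray-wise coarsenings ${}^L U$ are controlled. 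A correct sketch must either reproduce that subdivision-and-flatness argument or explain why it can be bypassed for a single ray; at present yours simply asserts the conclusion.

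The second gap is the claimed ``change of the multi-grading after which ${}^L U$ becomes an honest $V$-filtration along one smooth hypersurface.'' This is not available. The operator $\theta_L=L(x_1\partial_{x_1},\dots,x_k\partial_{x_k})$ is a nonnegative rational combination of the Euler operators attached to the $k$ components of the normal crossing divisor; the corresponding vector field vanishes along a normal crossing locus, not a single smooth hypersurface, and no coordinate change on $X$ converts it into a single $y\partial_y$. The fact that $L$ is primitive and extendable to a $\Z$-basis of $N$ only straightens the grading on the lattice $N$, not the geometry on $X$. What does survive is the purely algebraic graded argument -- once $R_L(\wt\cM)$ is known coherent over the Noetherian ring $R_L(\shD_X)$, the action of $\theta_L+\lambda$ on the graded pieces $ {}^L U_\lambda/{}^L U_{\lambda-1}$ is annihilated by a fixed polynomial -- but this is a statement about graded modules, not a geometric reduction to the one-hypersurface case, and it still presupposes the coherence input you skipped.
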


\subsection{Bernstein-Sato polynomials for lattices}
In this subsection, we prove the existence of Bernstein-Sato polynomials for lattices using Sabbah's multi-filtrations. 

We continue to assume that $X=\Delta^n$ is the $n$-dimensional polydisk with coordinates $(x_1,x_2,\dots,x_n)$, and smooth divisor $D_l=(x_l=0)$ for $l=1,2,\dots,k$ and $k\le n$. We set $D=\sum_{l=1}^kD_l$ and let $\cM$ be a $\shD_{X,D}$ lattice of some regular holonomic $\shD_U$-module $\cM_U$, where $j\colon U=X\setminus D\hookrightarrow X$.

We obtain a good $k$-filtration $U_\bullet\widetilde{\cM}$ of $\widetilde\cM=\shD_X\cdot \cM$ associated to $\cM$ by requiring:
\be\label{eq:kflattice}
R_U(\widetilde\cM)=R_V(\shD_X)\cdot \bigoplus_{\bs\le 0}\cM(\sum_{l=1}^ks_lD_l)\subseteq \bigoplus_{\bs\in \Z^k}\widetilde\cM;
\ee
it is good because $\bigoplus_{\bs\le 0}\cM(D\cdot \bs)$ is coherent over $\bigoplus_{\bs\le 0}V_\bs\shD_{X,D}$.
In other words, we have 
\[ U_\bs \wt \cM = V_\bs \shD_X \cdot \cM.\]
In particularly, we have for $\bs\le 0$
\[U_{\bs}\widetilde{\cM}=\cM(\sum_{l=1}^ks_lD_l).\]

The following theorem is a generalization of \cite[Proposition 1.2]{Sab2}. Although the statement there only concerns with log D-modules coming from the graph embedding, but the proof carries through exactly.
\begin{theorem}\label{thm:bflattice}
Assume that $\cM$ is a $\shD_{X,D}$ lattice of some regular holonomic $\shD_U$-module $\cM_U$. Then locally around a point $x\in D$ there exists $b_\cM(s_1, \cdots, s_k) \in \C[\bs]$, such that
\[ b_\cM({\bf x\partial}) \cM \In \cM (-D). \] 
Moreover, $b_\cM$ can be factorized as product of linear functions of the form $c + \sum_{i=1}^k \alpha_i s_i $ where $\alpha_i \in \Q_{\geq 0}$. 
\end{theorem}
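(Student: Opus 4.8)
The plan is to deduce Theorem \ref{thm:bflattice} directly from Sabbah's generalized Bernstein theorem (Theorem \ref{thm:Sabbf}) applied to the good $k$-filtration $U_\bullet\widetilde{\cM}$ defined by \eqref{eq:kflattice}, i.e. $U_\bs\widetilde{\cM}=V_\bs\shD_X\cdot\cM$. The key observation is that this filtration has the explicit description $U_\bs\widetilde{\cM}=\cM(\sum_l s_lD_l)$ for $\bs\le 0$, and in particular $U_{\bf 0}\widetilde{\cM}=\cM$ and $U_{-\mathbf{1}}\widetilde{\cM}=\cM(-D)$, where $\mathbf{1}=(1,\dots,1)$. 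So I want to produce a polynomial $b(\bs)$ with $b(x_1\partial_{x_1},\dots,x_k\partial_{x_k})\,U_{\bf 0}\subseteq U_{-\mathbf{1}}$.

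First I would pass to the saturation. By Sabbah's result the standard cone may be subdivided into an adapted fan $\Sigma$; let $\overline{U}_\bullet\widetilde{\cM}$ be the saturation with respect to $\Sigma$, which is again a good $k$-filtration with $U_\bs\subseteq\overline{U}_\bs$ for all $\bs$. Next, for each ray $L\in\sL(\Sigma)$ — a primitive vector in the positive quadrant — Theorem \ref{thm:Sabbf} gives a one-variable polynomial $b_L(s)$ with $b_L(L(x_1\partial_{x_1},\dots,x_k\partial_{x_k})+\lambda)\,{}^LU_\lambda\subseteq{}^LU_{\lambda-1}$ for all $\lambda\in\Z$. Since $L$ has nonnegative integer entries, $L(x_1\partial_{x_1},\dots,x_k\partial_{x_k})=\sum_i L_i\,x_i\partial_{x_i}$ and hence $b_L(\sum_i L_i s_i+\lambda)$, as a polynomial in $\bs$, factors into linear forms $c+\sum_i\alpha_i s_i$ with $\alpha_i=(\text{root multiple})\cdot L_i\in\Q_{\ge 0}$ — this is exactly the shape of factor required in the statement, and products of such linear forms again have this shape. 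The plan is to iterate these one-ray inclusions to descend from $\overline{U}_{\bf 0}$ to $\overline{U}_{-\mathbf{1}}$: choose a chain $\bf{0}=\bs_0>\bs_1>\cdots>\bs_r=-\mathbf{1}$ in $\Z^k$ where each step $\bs_{m-1}\to\bs_m$ subtracts a single standard basis vector $e_{j_m}$, and for each step find (via the fan structure) a ray $L$ with which one can realize $\overline{U}_{\bs_{m-1}}\subseteq\overline{U}_{\bs_m}$ after applying an appropriate $b_L$ evaluated at the relevant argument; multiplying the resulting polynomials over all steps yields $b(\bs)$ with $b(\bf{x\partial})\,\overline{U}_{\bf 0}\subseteq\overline{U}_{-\mathbf{1}}$.

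Finally I would relate this back to the unsaturated filtration. Since $\overline U$ is good and $U_{\bf 0}=\cM\subseteq\overline{U}_{\bf 0}$, I need $b(\bf{x\partial})\,\cM\subseteq\cM(-D)=U_{-\mathbf{1}}$, not merely into $\overline{U}_{-\mathbf{1}}$; the point is that the saturation and the original filtration agree in the relevant range, or more simply that one can run Sabbah's theorem so as to produce, in finitely many steps, a polynomial already landing inside $U_{-\mathbf 1}$ itself — this is the content of \cite[Proposition 1.2]{Sab2}, whose proof carries over verbatim as the statement notes, so I would cite that and only indicate the local nature of the construction (the filtrations, hence $b_\cM$, are defined locally near $x\in D$; coherence lets one shrink to where the good $k$-filtration and all the $b_L$ exist). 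The main obstacle is the bookkeeping in the iteration: keeping track of the exact arguments at which each $b_L$ must be evaluated along the chain of $\bs$'s so that the inclusions compose correctly, and checking that this does not spoil the $\alpha_i\in\Q_{\ge 0}$ positivity — which it does not, precisely because every $L$ lies in the positive quadrant so all substitutions $s_i\mapsto s_i$ (never $-s_i$) preserve the sign of the coefficients.
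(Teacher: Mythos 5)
Your overall strategy matches the paper's: take the good $k$-filtration $U_\bs\widetilde{\cM}=V_\bs\shD_X\cdot\cM$, pass to an adapted fan $\Sigma$ and the saturation $\overline{U}_\bullet$, invoke Sabbah's Theorem \ref{thm:Sabbf} for each ray $L\in\sL(\Sigma)$, and observe that $b_L(L(\bs)+\lambda)$ factors into linear forms with nonnegative rational coefficients. But there are two concrete gaps.

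\textbf{The one-ray-per-step chaining is incorrect.} You propose to descend $\overline{U}_{\bf 0}\supseteq\cdots\supseteq\overline{U}_{-\bf 1}$ by subtracting one standard basis vector at a time and ``for each step find a ray $L$ with which one can realize $\overline{U}_{\bs_{m-1}}\subseteq\overline{U}_{\bs_m}$ after applying an appropriate $b_L$.'' This cannot work, because $\overline{U}_\bs=\bigcap_{L\in\sL(\Sigma)}{}^L U_{L(\bs)}$ involves \emph{all} rays simultaneously: passing from $\bs$ to $\bs-e_j$ lowers the $L$-level by $L_j$ for every ray $L$ with $L_j>0$, so a single $b_L$ controls only one factor of the intersection. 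The paper instead builds the full product $b(\bs)=\prod_{i=1}^m\prod_{j=0}^{L_i(\vec 1)-1} b_{L_i}(L_i(\bs)-j)$ in one shot; each $b_{L_i}$-block descends ${}^{L_i}U_0$ to ${}^{L_i}U_{-L_i(\vec 1)}$, and the extra factors from other rays are harmless because they lie in $V_0\shD_X$ and hence preserve each ${}^{L_i}U$.

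\textbf{The return from $\overline{U}$ to $U$ is not merely a citation.} You are right that landing in $\overline{U}_{-\bf 1}$ is not enough, since in general $\overline{U}_{-\bf 1}\supsetneq U_{-\bf 1}=\cM(-D)$. The paper does not claim the two filtrations agree ``in the relevant range''; instead it iterates the descent far enough. First, the shifted polynomial $b(x_1\partial_1+a_1,\dots,x_k\partial_k+a_k)$ carries $\overline{U}_{\vec a}$ into $\overline{U}_{\vec a-\vec 1}$ for any $\vec a$, so one obtains for any $\lambda\ge 1$ a $b$-function carrying $\overline{U}_{\bf 0}$ into $\overline{U}_{-\lambda\vec 1}$. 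Then goodness of $\overline{U}_\bullet$ gives $N_1$ with $\overline{U}_{-N_1\vec 1-\vec a}=x^{\vec a}\,\overline{U}_{-N_1\vec 1}$ for $\vec a\ge 0$, and the lattice property gives $N_2$ with $\overline{U}_{-N_1\vec 1}\subseteq(x_1\cdots x_k)^{-N_2}\cM$, whence $\overline{U}_{-(N_1+N_2+1)\vec 1}\subseteq\cM(-D)$. Taking $\lambda=N_1+N_2+1$ produces the desired $b_\cM$. This quantitative step is the substance the paper adds beyond Sabbah's Theorem \ref{thm:Sabbf}, and your ``the proof carries over verbatim'' elides exactly it.
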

\begin{proof}
Write $\wt \cM = \shD_Y\cdot\cM$. 
Let $U_\bu \wt \cM = V_\bu \shD_X \cdot \cM$ be the filtration associated to $\cM$ as in \eqref{eq:kflattice}. 
Let $\Sigma$ be a fan in $N^+_\Q$ adapted to $U_\bu \wt \cM$. Let $L_1, \cdots, L_m$ be the collection of primitive vectors in the rays of $\Sigma$. Let $b_{L_i}(s)$ be the corresponding Bernstein-Sato polynomial for ${}^{L_i} U_\bu$ as in Theorem \ref{thm:Sabbf}.

Then one can define a polynomial $b(s_1, \cdots, s_n)$, given by
\[ b(\bs) = \prod_{i=1}^m \prod_{j=0}^{L_i(\vec 1)-1} b_{L_i}(L_i(\bs) -j) \]
where $\vec 1 = (1,\cdots, 1) \in \Z^k$, 
such that, 
\[ b(x_1 \pa_{x_1}, \cdots, x_k \pa_{x_k}) \overline U_{\vec 0} \In \overline U_{ - \vec 1},\]
where $\overline U_\bullet$ is the saturation of $U_\bullet$.
Hence, for all $\vec a \in \Z^k$, we have
\[ b(x_1 \pa_{x_1}+a_1, \cdots, x_k \pa_{x_k}+a_k) \overline U_{\vec a} \In \overline U_{ \vec a - \vec 1}.\]
Hence for any $\vec a$, and any $\Z \ni \lambda \geq 1$, we have a $b$-function 
\[ b_{\vec a, \lambda} (\{x_i \pa_i\}) \overline U_{\vec a} \In \overline U_{ \vec a - \lambda \cdot \vec 1}. \]

Since $\overline U_\bu$ is a good filtration of $\wt \cM$, in particular, there is a $N_1 \gg 0$, such that for all $\vec a \in (\Z_{\geq 0})^k$, we have 
\[ V_{-\vec a} \shD_X \cdot \overline U_{-N_1 \cdot \vec 1} = \overline U_{-N_1 \cdot \vec 1 - \vec a}. \]
And we also have
 \[ V_{-\vec a} \shD_X \cdot \overline U_{-N_1 \cdot \vec 1} = x_1^{a_1} \cdots x_k^{a_k} \overline U_{-N_1 \cdot \vec 1}. \]
 Since $\overline U_{-N_1 \cdot \vec 1}$ is a lattice for $\wt \cM_U$, we can find $N_2 \gg 0$, such that
 \[ \overline U_{-N_1 \cdot \vec 1} \In (x_1 \cdots x_k)^{-N_2}  \cM . \]
Hence we get
\[ \overline U_{-(N_1+N_2+1) \cdot \vec 1} \In \cM(-D) \In \cM = U_{\vec 0} \In  \overline U_{\vec 0}. \]
Thus the following construction will work
\[ b_\cM(s_1, \cdots, s_k) =  b_{\vec 0, (N_1+N_2+1)} (s_1, \cdots, s_k). \]

\end{proof}

\subsection{Proof of Theorem \ref{thm:logcom}}

Since the required statement is local, we can assume 
$$X=N\times T$$
with coordinates $(x_1,\dots,x_k,x_{k+1},\dots,x_n)$ centered at a point $x\in D$,
so that $(x_1,\dots,x_k)$ are coordinates of $N$ and $(x_{k+1},\dots,x_n)$ are coordinates of $T$ and 
$$D=\bigcup_{i=1}^k (x_i=0).$$ 

Since for every $q\in \Z$, $\cM(qD)$ is a lattice, we have a short exact sequence of $\shD_{X,D}$-modules
\begin{equation}\label{eq:sesp}
    0\to \cM((q-1)D)\to \cM(qD)\to \frac{\cM(pD)}{\cM((q-1)D)}\to 0.
\end{equation}
We first prove that 
\begin{equation}
    \DR_D(\frac{\cM(qD)}{\cM((q-1)D)}) \textup{ are acyclic for all } |p|\gg0.
\end{equation}
To this purpose, we apply Theorem \ref{thm:bflattice} to the lattice $\cM$ and obtain the linear forms $L_1,\dots,L_m$. Meanwhile, the $\shD_{X,D}$-module structure of $\cM$ makes the stalks
\[(\frac{\cM(qD)}{\cM((q-1)D)})_x\] 
$\bbC[s_1, \cdots, s_k]\simeq\bbC[{\bf x\partial}]$-modules for all $q\in \Z$. Thanks to Theorem \ref{thm:bflattice} again, the support of $\bbC[\bs]$-module 
\[(\frac{\cM}{\cM(-D)})_x\]
is contained in the zero locus of $b_{L_i}(L_i(\bs))$ for $i=1, \cdots, m$. Note the zero locus of $b_{L_i}(L_i(\bs))$ is a union of parallel hyperplanes in $M_\Q$ with co-vector $L_i$. Also since $L_i$ is in the first quadrant of $N$, we have $\la L_i (1, \cdots, 1) \ra > 0$, that is the line $\Q \cdot (1, \cdots, 1)$ pass through the zero-locus of $b_{L_i}(L_i(\bs))$ only finitely many times.  

Since 
\[\frac{\cM(qD)}{\cM((q-1)D)}\simeq \prod_{l=1}^k x_l^{-q}\cdot (\frac{\cM}{\cM(-D)}),\]
$(\frac{\cM(qD)}{\cM((q-1)D)})_x$ is supported on the  zero locus of $b_{L_i}(L_i(s_1 + q, \cdots, s_k+q))$, which is the zero locus of $b_{L_i}(L_i(\bs))$ shifted by $(-q, \cdots, -q)$. Hence for  $|q|$ large enough, $(0, \cdots, 0)$ is not contained in the support of $(\frac{\cM(qD)}{\cM((q-1)D)})_x$ as a $\C[\bs]$-module.
The Koszul complex 
\[\Kos((\frac{\cM(qD)}{\cM((q-1)D)})_x;{\bf x\partial})\]
is identified with the complex 
\[(\frac{\cM(qD)}{\cM((q-1)D)})_x\otimes_{\bbC[\bs]}\Kos(\bbC[\bs];\bs).\]
As the Koszul complex $\Kos(\bbC[\bs];\bs)$ is supported exactly on $\{0\}$, the complexes 
\[\Kos((\frac{\cM(qD)}{\cM((q-1)D)})_x;{\bf x\partial})\]
are acyclic for all $|q|\gg0$. Moreover, by \eqref{eq:drkos}, we see that
\[\DR_D(\frac{\cM(qD)}{\cM((q-1)D)})_x\simeq\DR_T(\Kos((\frac{\cM(qD)}{\cM((q-1)D)})_x;{\bf x\partial})),\]
where $\DR_T$ means the \emph{de Rham} functor is applied on the ambient space $T$ instead of $X$. 
Hence we conclude that they are both acyclic for every $|q|\gg 1$. 

Considering the short exact sequence \eqref{eq:sesp}, since \emph{de Rham} functor is exact, we obtain that 
\[\DR_D(\cM((q-1)D))\rarr \DR_D(\cM(qD))\]
is a quasi-isomorphism for every $|q|\gg 1$. We then take the inductive limit as $q\to \infty$. Since the inductive limit functor is exact, the natural morphism 
\[\DR_D(\cM(qD))\to \lim_{q\to \infty}DR_D(\cM(qD))=\DR(j_*\cM_U).\]
is a quasi-isomorphism $q\gg 1$. Since $\cM_U$ is regular holonomic, we know  $\DR$ and algebraic localizations commute (see \cite[Chapter V.4]{Bj}), and hence we have
\[\DR(j_*\cM_U)\simeq Rj_*\DR(\cM_U).\]
Consequently, we obtain the quasi-isomorphism \eqref{eq:logrh}.
\qed


\subsection{An example: Deligne lattices}\label{sec:dl}
Suppose that $(X,D)$ is a smooth log pair with $\dim X=n$. We fix a $\bbC$-local system $L$ on 
$U^\an=(X\setminus D)^\an$
and set 
$$\cV^{\an}=L\otimes \sO^\an_U$$ 
the flat holomorphic vector bundle. Denoted by $\overline{\cV}$ the Deligne lattice with eigenvalues of residues along $D$ having real parts in $(-1,0]$. It is well-known that the construction of Deligne lattices is in analytic nature; see \cite{Del} and \cite[\S 5]{HTT}. More precisely, $\overline{\cV}$ is a locally free $\sO^{\an}_X$-module of finite rank. Since $X$ is algebraic, $\overline\cV$ is also algebraic by GAGA (by adding more boundary divisors, $X$ can be assumed to be complete). We take $\cV=\overline\cV|_U$, the algebraic $\shD_U$-module of the local system $L$. Then $\overline \cV$ is a $\shD_{X,D}$ lattice of $j_{*}\cV$, where $j\colon U\hookrightarrow X$. 

We write $D=\sum_{l=1}^kD_l$. For a subset $I\subseteq \{1,2,\dots,k\}$, we also write 
\[D^{I}=\sum_{l\in I}D_l \textup{ and }D^{\bar I}=\sum_{l\notin I}D_l\]
and 
$$j^I_1: U\to X\setminus D^I \textup{ and }j^I_2: X\setminus D^I\to X.$$

We now calculate the $b$-functions for $\overline\cV$ locally.
Assume that the local system $L$ is of rank $m$ and let $\{c_1,c_2,\dots,c_m\}$ be a (linear independent) set of multi-valued sections of $L$ around $x\in D$. For simplicity we write
\[V=\textup{Span}_\bbC\{c_1,c_2,\dots,c_m\}.\]
Then $\overline \cV_x$ is trivialized (analytically) as $\sO_x$-module by 
\be\label{eq:dltriv}
\{e_i=\exp({\sum_{l=1}^k\Gamma_l\log x_l)\cdot c_i\}_{i=1}^m)},
\ee
where $\Gamma_l\in \mathfrak{gl}(V,\bbC)$ satisfying that the eigenvalues of $\Gamma_i\in (-1,0]$ for every $l$; in other words, $\Gamma_l$ is one branch of the logarithm of the monodromy of $L$ along the divisor $(x_l=0)$. One sees easily that $x_l\partial_{x_l}$ operates naturally on $e_i$. In this case, the $b$-function is: for every $l$ and for every pair of $k_1,k_2\in \Z$
\be\label{eq:bflattices}
(x_l\partial_{x_l}+\lambda_l)^{n_l}\cdot\overline{\cV}(k_2D^{\bar I}-k_1D^{I})_x\subseteq\overline{\cV}(k_2D^{\bar I}-k_1D^{I}-D_l)_x
\ee
with the real part Re$(\lambda_l)\in (-k_2-1,-k_2]$ if $l\not\in I$ and Re$(\lambda_l)\in (k_1-1,k_1]$ if $l\in I$.
\begin{proof}[Proof of Theorem \ref{thm:DLlc}:]
When $x\in D^{\bar I}\setminus D^I$, Theorem \ref{thm:logcom} and \eqref{eq:bflattices} imply for $k_2>0$
\[\DR_D(\overline{\cV}(k_2D^{\bar I}-k_1D^{I}))_x\simeq (Rj^I_{1*}L[n])_x.\]
Therefore, we still need to prove the case when $x\in D^I$. We now prove this case. Using the analytic trivialization of $\overline{\cV}(k_2D^{\bar I}-k_1D^{I})_x$ induced by \eqref{eq:dltriv}, one easily checks that 
\[x_l\partial_{x_l}: \overline{\cV}(k_2D^{\bar I}-k_1D^{I})_x\rarr\overline{\cV}(k_2D^{\bar I}-k_1D^{I})_x\]
is an isomorphism for $l\in I$ and for $k_1,k_2>0$. Hence, we have that
\[\DR_D(\overline{\cV}(k_2D^{\bar I}-k_1D^{I}))_x\]
is acyclic for $x\in D^I$. Since $j^I_{2!}$ is $0$-extension along $D^I$, the proof is now accomplished.
\end{proof}

\section{Application to $\shD_Y[\bs](\bh^{\bs+\vb}\cdot \cM_0)$}\label{sec:bbloc}
Suppose that $\bh=(h_1,\dots,h_k)$ is a $k$-tuple of regular functions on a smooth variety $Y$ of dimension $m$. Let $U=Y\setminus \prod_lh_l=0$ and $j:U\hookrightarrow Y$ be the open embedding. Let $\cM_U$ be a regular holonomic $\shD_{U}$-module. Since $\widetilde\cM=j_*\cM_U$ is also regular holonomic, we can assume that $\wt{\cM}$ is generated over $\shD_Y$ by 
 some $\sO_Y$-coherent submodule $\cM_0$. With respect to $\cM_0$, we consider the $\shD_Y[\bs]$-module generated by $\bh^{\bs+\vb}$ for $\vb=(v_1,\dots,v_l)\in \Z^k$:
\[\cM^\vb_\bh=\shD_Y[\bs](\bh^{\bs+\vb}\cdot \cM_0)\subseteq j_*(\bh^{\bs}\cdot\cM_U[\bs])=\bh^{\bs}\cdot\widetilde\cM[\bs]\]
where $\bh^{\bs+\vb}=\prod_lh_l^{s_l+v_l}$ and $\bs=(s_1,\dots,s_k)$ are independent variables.

On the other hand, consider the graph embedding of $\bh$:
\[\eta^\bh\colon Y\hookrightarrow X=Y\times \bbC^k\]
given by 
\[y\mapsto (y, h_1(y),\dots,h_k(y)) \textup{ for } y\in Y.\]
We write the coordinates of $\bbC^k$ by $(t_1,\dots,t_k)$. By identifying 
$  s_l \textup{ with } -t_l\partial_{t_l} $
we have 
\[j_*(\bh^{\bs}\cdot\cM_U[\bs])\simeq \eta^\bh_{+}\widetilde\cM\]
as $\shD_X$-modules, where $\eta^\bh_{+}$ denotes the $\shD$-module pushforward of $\eta^\bh$. In this case, we write
\[D=\bigcup_{l=1}^k (t_l=0),\]
and then the $\shD_Y[\bs]$-module $\cM_\bh^\vb$ is a $\shD_{X,D}$-lattice of $\eta^\bh_{+}\widetilde\cM$ for every $\vb\in \Z^k$. One checks immediately 
\begin{equation}\label{eq:latticegraph}
    t_l^{\pm1}\cdot \cM_\bh^\vb= \cM_\bh^{\vb\pm1_l}
\end{equation}
where $1_l\in \Z^k$ is the unit vector with the only 1 in the $l$-position.

\begin{definition}
We define 
\begin{enumerate}[label=\textup{(\roman*)}]
    \item $\cM^\vb_\bh[\bs]_{m_{\bf a}}=\cM^\vb_\bh\otimes_{\bbC[\bs]}\bbC[\bs]_{m_{\bf a}}$
    \item $\cM^\vb_\bh(\bs)=\cM^\vb_\bh\otimes_{\bbC[\bs]}\bbC(\bs)$
\end{enumerate}
where $m_{\bf a}$ is the maximal ideal of a closed point ${\bf a}\in \Spec{\bbC[\bs]}$ and $\bbC[\bs]_{m_{\bf a}}$ is the localization, and $\bbC(\bs)$ is the field of fractions of $\bbC[\bs]$. 
\end{definition} 
The above definitions are motivated by Ginsburg's ideas in \cite[\S 3.6-3.8]{Gil}.
Indeed, the case for $k=1$ is discussed  using the completion $\bbC[[\bs]]$ of $\bbC[\bs]$ with respect to the maximal ideal $m_{0}$ in \emph{loc. cit.}, while we only need the usual localization but for general $k$.

By definition, $\cM^\vb_\bh[\bs]_{m_{\bf a}}$ (resp. $\cM^\vb_\bh(\bs)$) are $\shD_Y[\bs]_{m_{\bf a}}$-modules (resp. $\shD_Y(\bs)$-modules). For an $\sA$-module $\cM$, we consider the duality functor 
\[\D(\cM)=R\mathcal{H}om_\sA(\cM, \sA)\otimes_{\sO_\bullet}\omega_\bullet[m]\]
where $\sA=\shD_\bullet,\shD_\bullet[\bs],\shD_\bullet[\bs]_{m_{\bf a}}$ or $\shD_\bullet(\bs)$ and $\bullet=Y$ or $U$. We then define the functor $j_!$ for $\sA$-modules by 
\[j_!=\D\circ j_*\circ \D;\]
in particular, for $\shD_U$-modules, $j_!$ is the usual $!$-extension of $\shD$-modules. 

We write by $Y(A)$ the variety of $Y$ over the defining ring $A$ through the base change $\bbC\to A$, where $A=\bbC[\bs], \bbC[\bs]_{m_{\vec0}}$ or $\bbC(\bs)$. Then  $\shD_Y[\bs]\otimes_{\bbC[\bs]} A$-modules are $\shD_{Y(A)}$-modules over the variety $Y(A)$.

Using Maisonobe's results in \cite{Mai}, we can prove the following duality property analogous to that of holonomic $\shD$-modules.

\begin{theorem}\label{thm:dualvan}
With notations as above, there exists a proper algebraic set $Z\subsetneq \bbC^k$ so that for every ${\bf a}\notin Z$ we have
\[\D(\cM)\stackrel{q.i.}{\simeq} \mathcal Ext^m_{\shD_{Y(A)}}(\cM,\shD_{Y(A)})\otimes_\sO\omega_Y,\]
where $\cM=$ $\cM^\vb_\bh\otimes_{\bbC[\bs]}A$ for $A=\bbC[\bs]_{m_{\bf a}}$ and $ \bbC(\bs)$.
\end{theorem}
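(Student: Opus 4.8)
The plan is to reduce Theorem \ref{thm:dualvan} to the classical statement that a holonomic $\shD$-module has cohomological duality concentrated in a single degree, applied over the base fields/rings $A = \bbC[\bs]_{m_{\bf a}}$ and $\bbC(\bs)$. The key point is that $\cM^\vb_\bh \otimes_{\bbC[\bs]} A$ should be \emph{holonomic} as a $\shD_{Y(A)}$-module once we invert a suitable polynomial (equivalently, avoid a proper algebraic subset $Z \subsetneq \bbC^k$). Over the generic point $\bbC(\bs)$ this is the most robust case, and Maisonobe's work in \cite{Mai} on Bernstein-Sato ideals provides precisely the input: the relative holonomicity of $\shD_Y[\bs](\bh^{\bs}\cdot\cM_0)$ over $\bbC[\bs]$, or more precisely the existence of a Bernstein-Sato ideal / relative characteristic variety of the expected dimension. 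Once holonomicity over $\bbC(\bs)$ is in hand, the standard homological algebra for holonomic $\shD$-modules over a field (e.g. \cite[Chapter I]{Bj} adapted to the coefficient field $\bbC(\bs)$) gives that $R\mathcal{H}om_{\shD_{Y(\bbC(\bs))}}(\cM, \shD_{Y(\bbC(\bs))})$ is concentrated in degree $m = \dim Y$, which is exactly the asserted quasi-isomorphism.

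First I would recall from \cite{Mai} (and the relative holonomicity formalism, cf. also the discussion around Sabbah's generalized $b$-functions in \S\ref{sec:sabreview}) that $\cM^\vb_\bh$ is a coherent $\shD_Y[\bs]$-module whose relative characteristic variety over $\Spec\bbC[\bs]$ has fibers of dimension $\le m$; independence of $\vb$ follows from \eqref{eq:latticegraph}, since the $t_l$ act invertibly on the ambient module $j_*(\bh^\bs\cdot\cM_U[\bs])$. Generic flatness over $\bbC[\bs]$ then yields a dense open $\bbC^k \setminus Z$ over which the formation of $\cM^\vb_\bh$ and of each $\mathcal{E}xt^i_{\shD_Y[\bs]}(\cM^\vb_\bh, \shD_Y[\bs])$ commutes with the base change $\bbC[\bs] \to A$; here one may need to further shrink $Z$ so that the support of the lower $\mathcal{E}xt$'s (the ``non-holonomic locus'') does not meet $\Spec A$. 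Over such points the fiber $\cM^\vb_\bh \otimes_{\bbC[\bs]} A$ is an honest holonomic $\shD_{Y(A)}$-module.

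Then I would invoke the holonomic duality theorem over the base: for a holonomic module $N$ over $\shD_{Y(A)}$ with $Y(A)$ smooth of relative dimension $m$, one has $\mathcal{E}xt^i_{\shD_{Y(A)}}(N, \shD_{Y(A)}) = 0$ for $i \ne m$. For $A = \bbC(\bs)$ this is literally the classical statement with $\bbC$ replaced by the field $\bbC(\bs)$; for $A = \bbC[\bs]_{m_{\bf a}}$ one uses that a module is holonomic over $\shD_{Y(A)}$ iff it is so after base change to $\bbC(\bs)$ \emph{and} after specialization at the closed point ${\bf a}$, together with the compatibility of $\mathcal{E}xt$ with flat base change along $\bbC[\bs]_{m_{\bf a}} \to \bbC(\bs)$ and the semicontinuity/cohomology-and-base-change machinery to descend vanishing from the two ``boundary'' points to the local ring. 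Plugging this into the definition $\D(\cM) = R\mathcal{H}om_{\shD_{Y(A)}}(\cM, \shD_{Y(A)}) \otimes_{\sO} \omega_Y[m]$ collapses the complex to $\mathcal{E}xt^m_{\shD_{Y(A)}}(\cM, \shD_{Y(A)}) \otimes_{\sO} \omega_Y$, placed in degree $0$ by the shift, which is the claim.

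\textbf{The main obstacle} I expect is the passage from the generic point $\bbC(\bs)$ to the local ring $\bbC[\bs]_{m_{\bf a}}$: holonomicity is not automatically preserved under arbitrary base change, so one must argue carefully that, after deleting $Z$, the fiber over \emph{every} point of $\Spec \bbC[\bs]_{m_{\bf a}}$ (not just the generic one) is holonomic, and that the $\mathcal{E}xt$ sheaves are compatible with this non-flat specialization. This is exactly where Maisonobe's analysis in \cite{Mai} of the behavior of $\cM^\vb_\bh$ over $\Spec\bbC[\bs]$ — in particular the dimension estimate for the relative characteristic variety and its good behavior away from $Z$ — does the real work; the remaining homological algebra (concentration of duality in degree $m$, compatibility of $\mathcal{E}xt$ with flat base change) is then formal. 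A secondary technical point is checking that $Z$ can be chosen uniformly for all the shifts $\vb$, which follows from \eqref{eq:latticegraph} since the modules $\cM^\vb_\bh$ for varying $\vb$ differ only by tensoring with invertible elements $t_l^{\pm 1}$.
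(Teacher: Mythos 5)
Your strategy for the $A = \bbC(\bs)$ case is essentially the paper's: use Maisonobe's result (\cite[R\'esultat 1]{Mai}) that the relative characteristic variety of $\cM^\vb_\bh$ is $\Lambda\times\bbC^k$, observe that localization commutes with taking characteristic varieties so $\cM^\vb_\bh(\bs)$ is holonomic over $\shD_{Y(\bbC(\bs))}$, and invoke holonomic duality over the field $\bbC(\bs)$. That part is fine.

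The gap is in the $A = \bbC[\bs]_{m_{\bf a}}$ case, which is where the theorem is actually delicate. You invoke ``generic flatness over $\bbC[\bs]$,'' ``compatibility of $\mathcal{E}xt$ with flat base change,'' ``semicontinuity/cohomology-and-base-change,'' and the criterion ``holonomic over $\shD_{Y(A)}$ iff holonomic after base change to $\bbC(\bs)$ and after specialization at ${\bf a}$'' — but none of this is made precise, and some of it is the wrong tool. Cohomology-and-base-change is machinery for coherent sheaves over proper morphisms, not directly applicable here; the ``iff'' holonomicity criterion is not standard and is not what the paper uses. What actually needs to be shown is that the $\bbC[\bs]$-support of each $\mathcal{E}xt^l_{\shD_{Y(\bbC[\bs])}}(\cM^\vb_\bh,\shD_{Y(\bbC[\bs])})$ for $l>m$ is a \emph{proper algebraic subset} $S^l$ of $\bbC^k$, so that $Z = \bigcup_{l>m} S^l$ is proper; the subtlety is precisely that these $\mathcal{E}xt$ sheaves are coherent over $\shD_Y[\bs]$ but not over $\bbC[\bs]$, so naive generic flatness of $\cM^\vb_\bh$ does not directly control them. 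The paper handles this via a specific chain: (i) a spectral sequence from a filtered free resolution shows $\gr^{\rm rel}_\bullet\mathcal{E}xt^l$ is a subquotient of $\mathcal{E}xt^l$ of $\gr^{\rm rel}_\bullet\cM^\vb_\bh$; (ii) Maisonobe's Prop.\ 14 and Bj\"ork's grade theorem give unconditional vanishing for $l<m$; (iii) the higher $\mathcal{E}xt^l$ are shown to be ``major\'e'' by $\Lambda$ in Maisonobe's sense, so their relative characteristic variety has the product form $\bigcup_\alpha \Lambda_\alpha\times S^l_\alpha$; (iv) a dimension count on the graded side forces each $S^l_\alpha$ to be proper for $l>m$, and Maisonobe's Prop.\ 9 then identifies the $\bbC[\bs]$-support as $\bigcup_\alpha S^l_\alpha$. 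You acknowledge ``Maisonobe's analysis does the real work'' but do not say what that work is; as written the argument for the local-ring case does not go through. (A minor point: you write ``lower $\mathcal{E}xt$'s'' where you mean the $\mathcal{E}xt$'s in degrees $\neq m$, and uniformity in $\vb$ via \eqref{eq:latticegraph} is correct but tangential.)
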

\begin{proof}
By \cite[R\'esultat 1]{Mai}, we know the relative characteristic variety of $\cM^\vb_\bh$ is $\Lambda\times \bbC^k$, where $\Lambda$ is a conic Lagrangian in $T^*Y$. By definition, one can check that taking characteristic varieties and taking localization commute. Hence, the $\shD_{Y(\bbC(\bs))}$-characteristic variety of $\cM^\vb_\bh(\bs)$ is
\[\Lambda_{\bbC(\bs)}\coloneqq\Lambda\times \bbC^k\times_{\bbC^k}\Spec{\bbC(\bs)}.\]
This means that $\cM^\vb_\bh(\bs)$ is $\shD_{Y(\bs)}$-holonomic. Therefore, the case for $A=\bbC(\bs)$ follows by for instance \cite[Theorem D.4.3.]{HTT} and the Bernstein inequality (the Bernstein inequality is true for $\shD$-modules over fields of characteristic 0).  

When $A=\bbC[\bs]$, since the Bernstein inequality does not hold for coherent $\shD_{Y(A)}$-modules (see \cite[\S 2]{Mai} for more details), we cannot apply the above argument directly. We take a filtered free resolution of $\cM^\vb_\bh$ as a relative $\shD$-module over $\bbC^k$. Then the standard spectral sequence argument implies that  $\gr^\rel_\bullet(\mathcal Ext^l_{\shD_{Y(\bbC[\bs])}}(\cM^\vb_\bh,\shD_{Y(\bbC[\bs])}))$ is a subquotient of  
$$\mathcal Ext^l_{\gr_\bullet\shD_{Y(\bbC[\bs])}}(\gr^\rel_\bullet(\cM^\vb_\bh),\gr_\bullet\shD_{Y(\bbC[\bs])})$$
for every $l$,where by definition $\gr_\bullet\shD_{Y(\bbC[\bs])}=\gr_\bullet \shD_Y\otimes_\bbC\bbC[\bs]$.

By \cite[Proposition 14]{Mai} and \cite[A.IV Theorem 4.10]{Bj}, we have $j(\gr^\rel_\bullet(\cM^\vb_\bh))=m$ and 
$$\mathcal Ext^l_{\shD_{Y(\bbC[\bs])}}(\cM^\vb_\bh,\shD_{Y(\bbC[\bs])})=0$$
for $l<m$. Since $$\supp(\gr^\rel_\bullet(\cM^\vb_\bh))=\Lambda\times \bbC^k$$
by the definition of relative characteristic varieties,
$\gr^\rel_\bullet(\mathcal Ext^l_{\shD_{Y(\bbC[\bs])}}(\cM^\vb_\bh,\shD_{Y(\bbC[\bs])}))$ is supported on $\Lambda\times\bbC^k$. Hence $\mathcal Ext^l_{\shD_{Y(\bbC[\bs])}}(\cM^\vb_\bh,\shD_{Y(\bbC[\bs])})\otimes_\sO\omega_Y$ is major\'e by $\Lambda$ in the sense of Maisonobe (\cite[D\'efinition 1]{Mai}). Then by \cite[Proposition 8]{Mai}, we conclude that its relative characteristic variety  is 
\[\bigcup_\alpha \Lambda_\alpha\times S^l_\alpha\]
where $\Lambda_\alpha$ is a Lagrangian supported on $\Lambda$ for every $\alpha$ and an algebraic subset $S^l_\alpha\subset \bbC^k$. 

Since the dimension of the support of 
$$\mathcal Ext^l_{\gr_\bullet\shD_{Y(\bbC[\bs])}}(\gr^\rel_\bullet(\cM^\vb_\bh),\gr_\bullet\shD_{Y(\bbC[\bs])})$$ 
is $< m+k$ for $l>m$ (see for instance \cite[Theorem D.4.4]{HTT}), $S^l_\alpha$ is a proper algebraic subset of $\bbC^k$ for every $\alpha$ and for $l>m$. Therefore, by \cite[Proposition 9]{Mai}, the $\bbC[\bs]$-module support of $$\mathcal Ext^l_{\shD_{Y(\bbC[\bs])}}(\cM^\vb_\bh,\shD_{Y(\bbC[\bs])})\otimes_\sO\omega_Y$$ 
is 
\[\bigcup_\alpha S_\alpha^l\subsetneq{\bbC^k}\]
for $l>m$. 

Now we take 
$$Z=\bigcup_{\alpha, l>m}S_\alpha^l$$
which is a proper algebraic subset of $\bbC^k$ and obtain that \[\mathcal Ext^l_{\shD_{Y(\bbC[\bs])}}(\cM^\vb_\bh,\shD_{Y(\bbC[\bs])})\otimes_{\bbC[\bs]}\bbC[\bs]_{m_{\bf a}}=\mathcal Ext^l_{\shD_{Y(\bbC[\bs]_{m_{\bf a}})}}(\cM^\vb_\bh[\bs]_{m_{\bf a}},\shD_{Y(\bbC[\bs]_{m_{\bf a}})})=0\]
for $\alpha\notin Z$ and for $l>m$. Therefore, the case for $A=\bbC[\bs]_{m_{\bf a}}$ also follows. 
\end{proof}

\begin{theorem}\label{thm:bblocal}
 \begin{enumerate}[label=\textup{(\roman*)}]
    \item For any $\vb\in \Z^k$, $$\cM^\vb_h(\bs)=j_*(\bh^{\bs}\cdot\cM_U(\bs))=j_!(\bh^{\bs}\cdot\cM_U(\bs))=j_{!*}(\bh^\bs\cdot\cM_U(\bs));$$\label{item:bblocal1}
    \item For $\vb\in \Z^k$ with $v_l\gg0$ for every $l$, $\cM^{-\vb}_h[\bs]_{m_{\vec0}}=j_*(\bh^{\bs}\cdot\cM_U[\bs]_{m_{\vec0}})$;\label{item:bblocal}
    \item For $\vb\in \Z^k$ with $v_l\gg0$ for every $l$, 
    $$\cM^\vb_h[\bs]_{m_{\vec0}}=j_!(\bh^{\bs}\cdot\cM_U[\bs]_{m_{\vec0}})=j_{!*}(\bh^{\bs}\cdot\cM_U[\bs]_{m_{\vec0}}).$$
\end{enumerate}
\end{theorem}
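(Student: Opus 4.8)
The three parts are proved together, by reading off the $b$-function of Theorem~\ref{thm:bflattice} in the graph-embedding picture and combining it with the duality statement of Theorem~\ref{thm:dualvan}. Since localization at $m_{\vec 0}$ (resp.\ passage to $\bbC(\bs)$) is flat, it commutes with $j_*$, with the duality functor $\D$, and with the formation of the modules $\cM^\vb_\bh$. Two preliminary observations. First, $j_*$ is an increasing union of lattices: because $\wt\cM=j_*\cM_U=\varinjlim_q\shD_Y\big((\textstyle\prod_l h_l)^{-q}\cM_0\big)$ and $h_1,\dots,h_k$ act invertibly on $\wt\cM$, unwinding the $\shD_Y[\bs]$-action on $\bh^\bs\cdot\wt\cM[\bs]$ gives, for $A=\bbC[\bs]$, $\bbC[\bs]_{m_{\vec 0}}$ or $\bbC(\bs)$,
\[
 j_*\big(\bh^\bs\cdot\cM_U\otimes_\bbC A\big)\;=\;\varinjlim_{q}\ \cM^{-q\vec 1}_\bh\otimes_{\bbC[\bs]}A,
\]
and $\cM^{\vb'}_\bh\supseteq\cM^{\vb''}_\bh$ whenever $\vb'\le\vb''$. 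Second, applying Theorem~\ref{thm:bflattice} to the $\shD_{X,D}$-lattice $\cM^{\vec 0}_\bh$ of $\eta^\bh_+\wt\cM$ on $X=Y\times\bbC^k$, $D=\bigcup_l\{t_l=0\}$, and using $\cM^{\vec 0}_\bh(-D)=\cM^{\vec 1}_\bh$ (from \eqref{eq:latticegraph}) together with $s_l=-t_l\partial_{t_l}$, produces $B(\bs)\in\bbC[\bs]$ — a product of linear forms $c+\sum_l\beta_l s_l$ whose linear parts share a sign — with $B(\bs+\vb)\,\cM^\vb_\bh\subseteq\cM^{\vb+\vec 1}_\bh$ for every $\vb\in\Z^k$; hence each $\bbC[\bs]$-module $\cM^{\vb}_\bh/\cM^{\vb+\vec 1}_\bh$ is killed by $B(\bs+\vb)$, whose zero locus is $\{B=0\}$ translated by $-\vb$.

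\emph{Proof of (ii) and (i).} For (ii), take $\vb$ with all $v_l\gg0$: since the linear parts of the factors of $B$ share a sign, $B(-\vb-n\vec 1)\neq 0$ for every $n\ge 1$ (a constant factor is nonzero; a factor with a nonzero linear part has absolute value tending to $\infty$ along $-\vb-n\vec 1$ as the $v_l$ grow), so $B(\bs-\vb-n\vec 1)$ is a unit in $\bbC[\bs]_{m_{\vec 0}}$ and $\cM^{-\vb-n\vec 1}_\bh[\bs]_{m_{\vec 0}}=\cM^{-\vb-(n-1)\vec 1}_\bh[\bs]_{m_{\vec 0}}$ for all $n\ge 1$. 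The union above therefore stabilizes at $n=0$, giving $\cM^{-\vb}_\bh[\bs]_{m_{\vec 0}}=j_*(\bh^\bs\cdot\cM_U[\bs]_{m_{\vec 0}})$. For (i), over $\bbC(\bs)$ the polynomial $B$ is a unit, forcing $\cM^\vb_\bh(\bs)=\cM^{\vb'}_\bh(\bs)$ for all $\vb,\vb'$; by the displayed formula this common module equals $j_*(\bh^\bs\cdot\cM_U(\bs))$, and it is holonomic by (the proof of) Theorem~\ref{thm:dualvan}. Since $\bs$ is transcendental over $\bbC$ while the residues of $\cM_U$ along the components of $\prod_l h_l=0$ are algebraic, no root hyperplane of $B$ is met — equivalently the $V$-filtrations of $j_*(\bh^\bs\cdot\cM_U(\bs))$ along these components have no integral jump — so this module has neither a submodule nor a quotient supported on $\prod_l h_l=0$; hence $j_{!*}(\bh^\bs\cdot\cM_U(\bs))=j_*(\bh^\bs\cdot\cM_U(\bs))$ and, dually, $j_!=j_{!*}$ there.

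\emph{Proof of (iii).} Apply (ii) to the regular holonomic module $\D(\cM_U)$ with the parameter $-\bs$, choosing an $\sO_Y$-coherent generator $\cM_0'$ of $j_*\D(\cM_U)$ over $\shD_Y$: for $v_l\gg0$,
\[
 j_*\big(\bh^{-\bs}\cdot\D(\cM_U)[\bs]_{m_{\vec 0}}\big)=\shD_Y[\bs]\big(\bh^{-\bs-\vb}\cdot\cM_0'\big)[\bs]_{m_{\vec 0}}.
\]
Now dualize, using $\D\big(\bh^\bs\cdot\cM_U[\bs]_{m_{\vec 0}}\big)=\bh^{-\bs}\cdot\D(\cM_U)[\bs]_{m_{\vec 0}}$, $j_!=\D\circ j_*\circ\D$, and $\D\circ\D=\id$; here $\D$ is exact on every module involved by Theorem~\ref{thm:dualvan}, the bad locus $Z$ avoiding $\vec 0$ for $v_l\gg0$ by the same translated-hyperplane bookkeeping as in (ii). This yields $j_!(\bh^\bs\cdot\cM_U[\bs]_{m_{\vec 0}})=\cM^\vb_\bh[\bs]_{m_{\vec 0}}$ for $v_l\gg0$, where one uses that, for $v_l\gg0$, the localized lattice $\cM^\vb_\bh[\bs]_{m_{\vec 0}}$ is independent of the chosen $\sO_Y$-generator of $\wt\cM$ (again the stabilization of (ii) combined with \eqref{eq:inclattice}). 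Finally $\cM^\vb_\bh\subseteq\cM^{-\vb'}_\bh$ for $\vb,\vb'>0$, so by (ii) the natural map $\cM^\vb_\bh[\bs]_{m_{\vec 0}}=j_!(\bh^\bs\cdot\cM_U[\bs]_{m_{\vec 0}})\hookrightarrow\cM^{-\vb'}_\bh[\bs]_{m_{\vec 0}}=j_*(\bh^\bs\cdot\cM_U[\bs]_{m_{\vec 0}})$ is injective, whence $j_{!*}$, being the image of $j_!\to j_*$, equals $j_!$.

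The crux is the second preliminary observation and its use in (ii): everything rests on the sign structure of the factors of the $b$-function in Theorem~\ref{thm:bflattice}, since that is exactly what makes the root hyperplanes recede from the origin under a large shift $\vb$ and so lets localization at $m_{\vec 0}$ detect the stable lattice; without it, (ii) and (iii) would fail. The remaining delicate point, entering (iii), is that $\D$ must be concentrated in a single degree on all the $\shD_Y[\bs]$-modules and localized lattices in play — which Theorem~\ref{thm:dualvan} grants only away from a proper $Z\subsetneq\bbC^k$ — so one must check $\vec 0\notin Z$ once $v_l\gg0$, and identify the dual of a lattice with a lattice; both are handled by the same shift/$b$-function argument, and I expect these bookkeeping verifications (rather than any conceptual step) to be the main work.
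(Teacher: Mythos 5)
Your overall route is the paper's own: pass to the graph embedding, extract a Bernstein--Sato polynomial $B$ from Theorem \ref{thm:bflattice} whose linear factors have nonnegative coefficients, translate $B$ by $\vb$ to control the $b$-function of each $\cM^\vb_\bh$, show the $\bbC[\bs]_{m_{\vec 0}}$-localized lattices stabilize once all $v_l\gg0$, and dualize to get the $j_!$ and $j_{!*}$ statements. Your treatment of (ii) — in particular the explicit verification that the shifted $b$-functions $B(\bs-\vb-n\vec 1)$ are units at $m_{\vec 0}$ uniformly in $n\ge 1$ — is a cleaner unwinding of what the paper states more tersely.

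Two steps, however, do not close. In (i) you deduce $j_*(\bh^\bs\cdot\cM_U(\bs))=j_{!*}(\bh^\bs\cdot\cM_U(\bs))$ from the claim that the $V$-filtrations ``along the components of $\prod_l h_l=0$'' have no integral jump because $\bs$ is transcendental. The idea is in the right spirit, but the components of $\prod_l h_l=0$ are in general singular, so the one-variable $V$-filtration along a smooth hypersurface is unavailable on $Y$; making this precise forces a return to the graph embedding, where the relevant jump bookkeeping is Sabbah's multi-filtration — exactly what the $b$-function already encodes. The paper instead proves $\cM^\vb_\bh(\bs)=j_{!*}(\bs)$ by a sandwich/Nullstellensatz argument (see \eqref{eq:const2}) and shows the kernel of $j_!\to j_*$ vanishes by a short duality argument; this is both shorter and rigorous. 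In (iii), after applying (ii) to $\D\cM_U$ and dualizing, what you actually obtain is $j_!(\bh^\bs\cdot\cM_U[\bs]_{m_{\vec 0}})=\D\big(\shD_Y[\bs](\bh^{-\bs-\vb}\cM_0')[\bs]_{m_{\vec 0}}\big)$; your asserted identification of this dual-of-a-lattice-for-$\D\cM_U$ with the lattice $\cM^\vb_\bh[\bs]_{m_{\vec 0}}$ for $\cM_U$ is not addressed by ``generator independence,'' which only shows the stable localized lattice for a fixed module does not depend on the chosen $\sO_Y$-generator, not that duality interchanges the stable lattices of $\cM_U$ and of $\D\cM_U$. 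The paper instead establishes $\cM^\vb_\bh[\bs]_{m_{\vec 0}}=j_{!*}$ directly by the sandwich argument and then proves $j_!\to j_*$ is injective via a purity argument over $\bbC[\bs]_{m_{\vec 0}}$ (Maisonobe and Bj\"ork). Your alternative argument that the inclusion $\cM^\vb_\bh\hookrightarrow\cM^{-\vb'}_\bh$ localizes to the adjunction map $j_!\to j_*$ is appealing and, if available, would replace the purity step — but it presupposes the very identification $j_!=\cM^\vb_\bh[\bs]_{m_{\vec 0}}$ that is missing, so as written it is circular.
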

\begin{proof}
The strategy of the proof is similar to that of \cite[Theorem 3.8.1, Proposition 3.8.3 and Corollary 3.8.4]{Gil} by applying generalized $b$-functions for lattices as in Theorem \ref{thm:bflattice}.  


As we identify $s_l$ with $-t_l\partial_{t_l}$, by Theorem \ref{thm:bflattice}, there exists a generalized $b$-function $b_\vb(\bs)$ so that 
\[b_\vb(\bs)\cM^\vb_\bh\subseteq \cM^{\vb+\vec1}_\bh\]
where $\vec1=(1,1,\dots,1)\in \Z^k$, thanks to \eqref{eq:latticegraph} again. Since $b_\vb(\bs)$ is invertible in $\bbC(\bs)$, we conclude that 
\be\label{eq:const}
\cM^\vb_\bh(\bs)=\cM^{\vb+\bq}_\bh(\bs)
\ee 
for every $\bq=(q,q,\dots,q)\in \Z^k$ and hence 
\be\label{eq:const1}
\cM^\vb_\bh(\bs)=j_*(\bh^{\bs}\cdot\cM_U(\bs))
\ee
for every $\vb\in \Z^k$. 

By Theorem \ref{thm:dualvan} for $\D(\cM_U)$ in the case for $A=\bbC(\bs)$, we know that 
$j_!(\bh^{\bs}\cdot\cM_U(\bs))$
is a coherent $\shD_{Y(\bs)}$-module instead of a complex. One can easily check 
\[j_!(\bh^{\bs}\cdot\cM_U(\bs))|_U\simeq\bh^{\bs}\cdot\cM_U(\bs).\]
By adjunction we hence have a natural morphism 
\[j_!(\bh^{\bs}\cdot\cM_U(\bs))\rarr j_*(\bh^{\bs}\cdot\cM_U(\bs))\]
and we define $j_{!*}(\bh^{\bs}\cdot\cM_U(\bs))$ to be its image. By duality, $j_{!*}(\bh^{\bs}\cdot\cM_U(\bs))$ is the minimal extension of $\bh^{\bs}\cdot\cM_U(\bs)$.

By minimality, we have for every $\vb\in \Z^k$
\[j_{!*}(\bh^{\bs}\cdot\cM_U(\bs))\hookrightarrow \cM^\vb_\bh(\bs)\] 
and the quotient is supported on $Y\setminus U$. By coherence and nullstellensatz, we see that if $q\gg0$, 
\[\cM^{\vb+\bq}_\bh(\bs)\hookrightarrow j_{!*}(\bh^{\bs}\cdot\cM_U(\bs))\] 
and hence by \eqref{eq:const} for every $\vb\in\Z^k$
\be\label{eq:const2}
\cM^\vb_\bh(\bs)=j_{!*}(\bh^{\bs}\cdot\cM_U(\bs)).
\ee 

To prove the second equality in the first statement, we use duality. By \eqref{eq:const1} and \eqref{eq:const2} the natural morphism 
\be \label{eq:const3}
j_!(\bh^{\bs}\cdot\cM_U(\bs))\rarr j_{*}(\bh^\bs\cdot\cM_U(\bs))
\ee 
is surjective, and we denote the kernel by $K$, which is a holonomic $\shD_{Y(\bbC(\bs))}$-module. It is clear that 
\[\D(\bh^\bs\cdot\cM_U[\bs])\simeq \bh^\bs\cdot\D(\cM_U)[\bs] \textup{ and }\D(\bh^\bs\cdot\cM_U(\bs))\simeq \bh^\bs\cdot\D(\cM_U)(\bs).\]
Since $\D\circ\D$ is identity, we then have 
\[\D(j_*(\bh^{\bs}\cdot\cM_U(\bs)))=j_!(\bh^{\bs}\cdot\D(\cM_U)(\bs)))\textup{ and }\D(j_!(\bh^{\bs}\cdot\cM_U(\bs)))=j_*(\bh^{\bs}\cdot\D(\cM_U)(\bs))).\]
Therefore, we have an exact sequence 
\[0\rarr j_!(\bh^{\bs}\cdot\D(\cM_U)(\bs))\rarr j_*(\bh^{\bs}\cdot\D(\cM_U)(\bs))\rarr \D K\rarr 0.\]
Replacing $\cM_U$ by $\D\cM_U$, we have 
\[j_*(\bh^{\bs}\cdot\D(\cM_U)(\bs))=j_{!*}(\bh^{\bs}\cdot\D(\cM_U)(\bs))\]
and hence $\D K$ and $K$ are both 0. Therefore, the morphism \eqref{eq:const3} is identity.  

One observes that for $\vb\in\Z^k$ with $v_l\gg0$ for every $l$, the $b$-functions $b_\vb(\bs)$ and $b_{-\vb}(\bs)$ are invertible in $\bbC[\bs]_{m_{\vec0}}$ and the second statement follows. 

Now we apply Theorem \ref{thm:dualvan} for $\cM^{-\vb}_\bh$ and obtain a proper algebraic subset $Z$.
Sine $\Z^k$ is dense in $\bbC^r$ (with respect to the Zariski-topology), we conclude that $\vec 0\notin Z$ for certain $v_l\gg0$ with $l=1,\dots k$. Therefore 
\[j_!(\bh^{\bs}\cdot\cM_U[\bs]_{m_{\vec0}})\]
is also a coherent $\shD_{Y(\bbC[\bs]_{m_{\vec 0}})}$-module instead of a complex. Hence we can define $j_{!*}(\bh^{\bs}\cdot\cM_U[\bs]_{m_{\vec0}})$ to be the minimal extension of $\bh^{\bs}\cdot\cM_U[\bs]_{m_{\vec0}}$ similar to how we define $j_{!*}(\bh^{\bs}\cdot\D(\cM_U)(\bs))$.

We know that $j_*(\bh^\bs\cdot \cM_U(\bs))$ is the localization of $j_*(\bh^\bs\cdot \cM_U[\bs]_{m_{\vec0}})$ at the generic point of $\Spec\bbC[\bs]_{m_{\vec0}}$. One can easily check that the duality functor $\D$ commutes with localization. Hence, we have 
\be\label{eq:const4}
   \D(j_*(\bh^\bs\cdot \cM_U(\bs)))=\D(j_*(\bh^\bs\cdot \cM_U[\bs]_{m_{\vec0}}))\otimes_{\bbC[\bs]_{m_{\vec0}}}\bbC(\bs).
\ee
We also have a commutative diagram
\be\label{eq:j_!*cmd}
\begin{tikzcd}
j_!(\bh^\bs\cdot \cM_U[\bs]_{m_{\vec0}})\arrow[r]\arrow[d,hook]&j_*(\bh^\bs\cdot \cM_U[\bs]_{m_{\vec0}})\arrow[d,hook]\\
j_!(\bh^\bs\cdot \cM_U(\bs))\arrow[r,"="]&j_*(\bh^\bs\cdot \cM_U(\bs)).
\end{tikzcd}
\ee
The second vertical morphism is injective as $j_*$ is exact. By applying \eqref{eq:const4} for $\D\cM_U$, to conclude that the first vertical morphism is injective, it is enough to prove that the morphism given by multiplication by $b_1(\bs)$
\[j_!(\bh^\bs\cdot \cM_U[\bs]_{m_{\vec0}})\xrightarrow{\cdot b_1(\bs)}j_!(\bh^\bs\cdot \cM_U[\bs]_{m_{\vec0}})\]
is injective for every polynomial $b_1(\bs)\not\in m_{\vec 0}$. Indeed, if on the contrary the morphism 
\[j_!(\bh^\bs\cdot \cM_U[\bs]_{m_{\vec0}})\xrightarrow{\cdot b_1(\bs)}j_!(\bh^\bs\cdot \cM_U[\bs]_{m_{\vec0}})\]
has a non-zero kernel $\cK$, then $\cK$ is major\'e by a Langrangian over $\bbC[\bs]_{m_{\vec 0}}$. Since $\cK$ is killed by $b_1(\bs)$, the relative characteristic variety of $\cK$ has dimension $<n+k$ and hence the graded number $j(\cK)>n$ (see \cite[Definition A.IV.1.8]{Bj}). But by \cite[Proposition A.IV 2.6]{Bj}, $j_!(\bh^\bs\cdot \cM_U[\bs]_{m_{\vec0}})$ is $n$-pure over $\shD_X[\bs]_{m_{\vec0}}$ (thanks to Theorem \ref{thm:dualvan} again) and hence $j(\cK)=n$, which is a contradiction. Therefore, the first horizontal morphism in Diagram \eqref{eq:j_!*cmd} is injective, and we conclude that \[j_!(\bh^{\bs}\cdot\cM_U[\bs]_{m_{\vec0}})=j_{!*}(\bh^{\bs}\cdot\cM_U[\bs]_{m_{\vec0}}).\]
Running the argument in proving \eqref{eq:const3}, we also obtain 
\[\cM^\vb_h[\bs]_{m_{\vec0}}=j_{!*}(\bh^{\bs}\cdot\cM_U[\bs]_{m_{\vec0}})\]
for $\vb\in \Z^k$ with $v_l\gg0$ for every $l$.
\end{proof}

\begin{coro}[$\Rightarrow$Theorem \ref{thm:introex1}+Theorem \ref{thm:introex2}]\label{cor:j*j!}\label{cor:mainc}
For $\vb\in \Z^k$ with $v_l\gg0$ for every $l$, we have quasi-isomorphisms
\[\bbC\stackrel{\bL}{\otimes}_{\bbC[\bs]}\cM^{-\vb}_\bh\simeq j_*\cM_U \textup{ and } \bbC\stackrel{\bL}{\otimes}_{\bbC[\bs]}\cM^\vb_\bh\simeq j_!\cM_U.\]
In particular, for $\vb\in \Z^k$ with $v_l\gg0$ for every $l$
\[\frac{\cM_\bh^{-\vb}}{(s_1,\dots,s_k)\cM_\bh^{-\vb}}\simeq \shD_Y\cdot(\prod_lh_l^{-v_l}\cM_0)=j_*\cM_U\]
and 
\[\frac{\cM_\bh^{\vb}}{(s_1,\dots,s_k)\cM_\bh^{\vb}}\simeq j_!\cM_U\textup{ and }j_{!*}\cM_U=\shD_Y\cdot(\prod_lh_l^{v_l}\cM_0).\]
\end{coro}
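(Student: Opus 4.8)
The plan is to read off every assertion of Corollary~\ref{cor:mainc} from Theorem~\ref{thm:bblocal} by base changing to the closed point $\vec0\in\Spec\bbC[\bs]$, i.e.\ by applying $\bbC\stackrel{\bL}{\otimes}_{\bbC[\bs]}(-)$, and checking that this operation commutes with $j_*$ and with $j_!$ on the modules in sight. The first observation is that, since $\bbC=\bbC[\bs]_{m_{\vec0}}/(\bs)$ is already a module over the local ring $\bbC[\bs]_{m_{\vec0}}$, one has $\bbC\stackrel{\bL}{\otimes}_{\bbC[\bs]}\cM^{\vb}_\bh=\bbC\stackrel{\bL}{\otimes}_{\bbC[\bs]_{m_{\vec0}}}\cM^{\vb}_\bh[\bs]_{m_{\vec0}}$ for either sign of $\vb$, so one may work over $\bbC[\bs]_{m_{\vec0}}$ and feed in Theorem~\ref{thm:bblocal} directly. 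Throughout write $\wt\cM=j_*\cM_U$, so that $\cM_0$ generates $\wt\cM$ over $\shD_Y$.

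For the first quasi-isomorphism I would use Theorem~\ref{thm:bblocal}(ii): for $v_l\gg0$, $\cM^{-\vb}_\bh[\bs]_{m_{\vec0}}=j_*(\bh^\bs\cdot\cM_U[\bs]_{m_{\vec0}})$. The statement is local on $Y$, so one may take $Y$ (hence $U=Y\setminus(\prod_lh_l=0)$) affine and $j$ an affine open immersion; then $j_*=Rj_*$ is exact and, on quasi-coherent sheaves, is just restriction of scalars, so it does not change the underlying $\bbC[\bs]_{m_{\vec0}}$-module. Since $\bh^\bs\cdot\cM_U[\bs]_{m_{\vec0}}\cong\cM_U\otimes_\bbC\bbC[\bs]_{m_{\vec0}}$ is $\bbC[\bs]_{m_{\vec0}}$-flat, so is $\cM^{-\vb}_\bh[\bs]_{m_{\vec0}}$, whence
\[\bbC\stackrel{\bL}{\otimes}_{\bbC[\bs]}\cM^{-\vb}_\bh=j_*(\bh^\bs\cdot\cM_U[\bs]_{m_{\vec0}})/(\bs)=j_*\big((\bh^\bs\cdot\cM_U[\bs]_{m_{\vec0}})/(\bs)\big)=j_*\cM_U,\]
and in particular $\cM^{-\vb}_\bh/(\bs)\cM^{-\vb}_\bh=j_*\cM_U$. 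To upgrade this to the identification with $\shD_Y\cdot(\prod_lh_l^{-v_l}\cM_0)$, note that the natural $\shD_Y$-linear map $\cM^{-\vb}_\bh/(\bs)\to\wt\cM=j_*\cM_U$ (induced by $\cM^{-\vb}_\bh\subseteq\bh^\bs\cdot\wt\cM[\bs]$ and $\bs\mapsto0$) has image $\shD_Y\cdot(\prod_lh_l^{-v_l}\cM_0)$; since $\prod_lh_l^{v_l}\in\shD_Y$ carries $\prod_lh_l^{-v_l}\cM_0$ onto $\cM_0$, which generates $\wt\cM$, this image is all of $\wt\cM$, and a surjection between two copies of the holonomic (finite length) module $j_*\cM_U$ is an isomorphism.

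For the $j_!$-statements I would route everything through the relative duality $\D$. Fix $v_l\gg0$ so that $\vec0$ avoids the bad loci of Theorem~\ref{thm:dualvan} for both $\cM_U$ and $\D\cM_U$; then, as in the proof of Theorem~\ref{thm:bblocal}, on the modules involved $\D$ is $\mathcal Ext^m_{\shD}(-,\shD)\otimes_\sO\omega_Y$ placed in degree $0$, these modules are pure (cf.\ that proof), and $j_!(\bh^\bs\cdot\cM_U[\bs]_{m_{\vec0}})=\D\,j_*\,\D(\bh^\bs\cdot\cM_U[\bs]_{m_{\vec0}})=\D\,j_*(\bh^\bs\cdot\D\cM_U[\bs]_{m_{\vec0}})$ is a single module. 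Writing $P=j_*(\bh^\bs\cdot\D\cM_U[\bs]_{m_{\vec0}})$, the second paragraph applied to $\D\cM_U$ shows $P$ is $\bbC[\bs]_{m_{\vec0}}$-flat with $P/(\bs)=j_*\D\cM_U$ and $\mathcal Ext^{<m}_{\shD}(P,\shD)=\mathcal Ext^{<m}_{\shD}(j_*\D\cM_U,\shD)=0$, so $\D$ commutes with $\bbC\stackrel{\bL}{\otimes}_{\bbC[\bs]_{m_{\vec0}}}(-)$ on $P$; combined with Theorem~\ref{thm:bblocal}(iii) this gives
\[\bbC\stackrel{\bL}{\otimes}_{\bbC[\bs]}\cM^{\vb}_\bh=\D\big(\bbC\stackrel{\bL}{\otimes}_{\bbC[\bs]_{m_{\vec0}}}P\big)=\D(j_*\D\cM_U)=j_!\cM_U.\]
Since $\cM_U$ is regular holonomic and $j$ is an affine open immersion, $j_!\cM_U$ sits in degree $0$, so $\cM^{\vb}_\bh/(\bs)\cM^{\vb}_\bh=H^0(\bbC\stackrel{\bL}{\otimes}_{\bbC[\bs]}\cM^{\vb}_\bh)=j_!\cM_U$. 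For the final equality, the natural map $\cM^{\vb}_\bh/(\bs)\to\wt\cM=j_*\cM_U$ restricts to an isomorphism over $U$, so under $\cM^{\vb}_\bh/(\bs)\simeq j_!\cM_U$ it is the canonical morphism $j_!\cM_U\to j_*\cM_U$, whose image is by definition $j_{!*}\cM_U$ and, computing images of generators as above, equals $\shD_Y\cdot(\prod_lh_l^{v_l}\cM_0)$.

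The one genuinely delicate step is the compatibility of the relative duality $\D$ with the closed base change $\bbC\stackrel{\bL}{\otimes}_{\bbC[\bs]_{m_{\vec0}}}(-)$ used in the third paragraph: for an arbitrary coherent relative $\shD$-module over $\bbC[\bs]$ this fails (the Bernstein inequality is unavailable), and it is precisely the purity and ``majoré-by-a-Lagrangian'' conclusions of Theorem~\ref{thm:dualvan}, together with the choice $v_l\gg0$ forcing $\vec0$ into the good locus, that make it go through. I would isolate it as a lemma — essentially \cite[Propositions~8 and 9]{Mai} combined with flat base change for $\mathcal Ext$ of a $\bbC[\bs]_{m_{\vec0}}$-flat module with vanishing lower $\mathcal Ext$'s. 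Everything else is formal bookkeeping once Theorem~\ref{thm:bblocal} is granted.
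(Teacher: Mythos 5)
Your proposal is correct and follows essentially the same route as the paper: reduce to the local ring $\bbC[\bs]_{m_{\vec0}}$, read off the $j_*$-case from Theorem~\ref{thm:bblocal}(ii) using flatness of $j_*(\bh^\bs\cdot\cM_U[\bs]_{m_{\vec0}})$ over $\bbC[\bs]_{m_{\vec0}}$, and handle the $j_!$-case by passing through duality and the identity $j_!=\D\circ j_*\circ\D$ applied to $\D\cM_U$. The paper compresses this (``As $j_*$ is exact\dots'', ``One can check that $\bbC\otimes^{\bL}_{\bbC[\bs]}(-)$ and $\D$ commute'', and references back to the argument for \eqref{eq:const1} and \eqref{eq:const2}), whereas you flesh out the bookkeeping; the structure is the same.

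One small calibration on your closing paragraph: the commutation of $\D$ with $\bbC\otimes^{\bL}_{\bbC[\bs]_{m_{\vec0}}}(-)$ is in fact formal for \emph{any} coherent $\shD_Y[\bs]_{m_{\vec0}}$-module, by replacing the module with a finite free resolution $F^\bullet$ and noting that $\shD_Y[\bs]_{m_{\vec0}}$ is $\bbC[\bs]_{m_{\vec0}}$-free, so $\mathcal{H}\textup{om}(F^\bullet,\shD_Y[\bs]_{m_{\vec0}})\otimes^{\bL}\bbC\simeq\mathcal{H}\textup{om}(F^\bullet/(\bs),\shD_Y)$ computes both sides. What genuinely requires the purity/``majoré''-input from Theorem~\ref{thm:dualvan} (and hence the constraint $\vec0\notin Z$, i.e.\ $v_l\gg0$) is that $\D$ of the modules in play is concentrated in a single cohomological degree, so that $j_!(\bh^\bs\cdot\cM_U[\bs]_{m_{\vec0}})$ is a module rather than a complex and the derived base change produces $j_!\cM_U$ in degree~$0$. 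You use this correctly in the body of the argument; it is only the framing of the ``delicate step'' that is slightly misplaced. Also note that $j$ is automatically an affine open immersion (complement of a hypersurface), so shrinking $Y$ to be affine is a convenience rather than a necessity.
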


\begin{proof}
It is obvious that 
\[\bbC\stackrel{\bL}{\otimes}_{\bbC[\bs]_{m_{\vec0}}}\bh^\bs\cdot\cM_U[\bs]_{m_{\vec0}}\simeq \cM_U.\]
As $j_*$ is exact, we have 
\begin{equation}\label{eq:j*}
    \bbC\stackrel{\bL}{\otimes}_{\bbC[\bs]_{m_{\vec0}}}j_*(\bh^\bs\cdot\cM_U[\bs]_{m_{\vec0}})\simeq j_*\cM_U.
\end{equation}
Since 
\[\bbC\stackrel{\bL}{\otimes}_{\bbC[\bs]}\cM^{-\vb}_\bh\simeq\bbC\stackrel{\bL}{\otimes}_{\bbC[\bs]_{m_{\vec0}}}\cM^{-\vb}_\bh(m_{\vec0}),\]
using Thoerem \ref{thm:bblocal} \ref{item:bblocal} and \eqref{eq:j*}, we get for $\vb\in \Z^k$ with $v_l\gg0$ for every $l$ $$\bbC\stackrel{\bL}{\otimes}_{\bbC[\bs]}\cM^{-\vb}_\bh\simeq j_*\cM_U.$$ 
One can check that the functors $\bbC\stackrel{\bL}{\otimes}_{\bbC[\bs]}\bullet$ and $\D$ commute. Hence, we have for $\vb\in \Z^k$ with $v_l\gg0$ for every $l$ 
\[\bbC\stackrel{\bL}{\otimes}_{\bbC[\bs]_{m_{\vec0}}}\cM^{\vb}_\bh[\bs]_{m_{\vec0}}\simeq\D(j_*(\bh^\bs\cdot(\D\cM_U)[\bs]_{m_{\vec0}})\stackrel{\bL}{\otimes}_{\bbC[\bs]_{m_{\vec0}}}\bbC)\simeq\D(j_*(\D\cM_U))=j_!\cM_U.\]
Therefore we obtain
for $\vb\in \Z^k$ with $v_l\gg0$ for every $l$ $$\bbC\stackrel{\bL}{\otimes}_{\bbC[\bs]}\cM^{\vb}_\bh\simeq j_!\cM_U.$$ 
We have proved the first statement.

Using the Koszul resolution of $\bbC$ as a $\bbC[\bs]$-module, the first statement implies 
\[\frac{\cM_\bh^{-\vb}}{(s_1,\dots,s_k)\cM_\bh^{-\vb}}\simeq j_*\cM_U \textup{ and }\frac{\cM_\bh^{\vb}}{(s_1,\dots,s_k)\cM_\bh^{\vb}}\simeq j_!\cM_U.\]
Applying the argument of the proof of the equation \eqref{eq:const1}, one obtains $$\shD_Y\cdot(\prod_lh_l^{-v_l}\cM_0)=j_*\cM_U.$$ The proof of $\shD_Y\cdot(\prod_lh_l^{v_l}\cM_0)=j_{!*}\cM_U$ is similar to that of \eqref{eq:const2} (see also the proof of Lemma 3.8.2 in \cite{Gil}).
\end{proof}

\section{Index theorem for lattices}\label{sec:logind}

For regular holonomic $\shD_U$-module, using the microlocalization of the sheaf of logarithmic differential operators, Ginsburg \cite[Appendix A.]{Gil1} proved the following deep theorem: 
\begin{theorem}[Ginsburg]\label{thm:Gil}
If $\cM_U$ is a regular holonomic $\shD_U$-module, then for every lattice $\cM$ of $\cM_U$ we have 
\[\overline\SiS\cM_U=\SiS\cM,\]
where $\overline\SiS\cM_U$ is the closure of $\SiS\cM_U$ inside $T^*(X,D)$.
\end{theorem}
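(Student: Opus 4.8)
\emph{Proof proposal.} The statement is local on $X$, so take $X=\Delta^n$ with $D=\bigcup_{i=1}^k(x_i=0)$. Formation of the characteristic cycle commutes with restriction to $U=X\setminus D$, over which $\shD_{X,D}$ and $T^*(X,D)$ restrict to $\shD_U$ and $T^*U$; hence $\SiS\cM$ and $\SiS\cM_U$ agree as cycles on $\pi^{-1}(U)=T^*U$. An irreducible component $Z$ of the characteristic variety of $\cM$ meets $T^*U$ precisely when $\pi(Z)\not\subseteq D$, and then $\overline{Z\cap T^*U}=Z$ carries the same multiplicity in $\SiS\cM$ as in $\overline\SiS\cM_U$ (read off at a generic point, which lies over $U$). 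Therefore $\overline\SiS\cM_U=\sum_{\pi(Z)\not\subseteq D}(\textup{mult})\,[Z]$, and the theorem is \emph{equivalent} to the assertion that no irreducible component $Z$ of the characteristic variety of $\cM$ has $\pi(Z)\subseteq D$. The plan is to prove this by induction on $\dim X$, splitting according to whether the logarithmic symbols $\xi_i=\sigma(x_i\partial_i)$ survive on $Z$ or not. So fix such a hypothetical $Z$, with generic point $z$, and after shrinking $X$ assume $\pi(Z)\subseteq D^J:=\bigcap_{i\in J}D_i$ with $\pi(Z)\not\subseteq D_l$ for $l\notin J$, so that only the $D_i$, $i\in J$, matter near $z$.

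\emph{The logarithmic directions.} Suppose first that $k=1$ near $z$ (a single boundary divisor $D_1$) and $\xi_1(z)\ne0$. By Theorem~\ref{thm:bflattice} there is, near $\pi(z)$, a one-variable polynomial $b_\cM(s)$ with $b_\cM(x_1\partial_1)\cM\subseteq\cM(-D)=x_1\cM$. I would microlocalize: put $\mathcal N=\mathcal E_{X,D}\otimes_{\shD_{X,D}}\cM$, where $\mathcal E_{X,D}$ is the sheaf of logarithmic microdifferential operators on $\dot T^*(X,D)$, a coherent $\mathcal E_{X,D}$-module whose support is the characteristic variety of $\cM$ minus the zero section; since $\xi_1(z)\ne0$ it suffices to show $\mathcal N_z=0$. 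In $\mathcal E_{X,D,z}$ the operator $x_1\partial_1$ has invertible principal symbol $\xi_1$, hence so does every $x_1\partial_1-r$, so $b_\cM(x_1\partial_1)$ is a unit of $\mathcal E_{X,D,z}$; as $x_1\mathcal E_{X,D,z}$ is a two-sided ideal and, modulo it, $b_\cM(x_1\partial_1)\cM\subseteq x_1\cM$ says $b_\cM(x_1\partial_1)$ kills $\mathcal N_z/x_1\mathcal N_z$ while simultaneously acting invertibly on it, one gets $\mathcal N_z=x_1\mathcal N_z$; Nakayama's lemma (for coherent $\mathcal E$-modules, with $x_1$ in the maximal ideal of the order-zero ring at $z$) then gives $\mathcal N_z=0$, contradicting $z\in Z$. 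The general logarithmic case — some $\xi_i(z)\ne0$ with several $D_i$ through $z$ — is to be reduced to this one by symplectic reduction along the $x_{i'}\partial_{i'}$ with $\xi_{i'}|_Z=0$, as in the next paragraph.

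\emph{The symplectic-reduction directions.} Now suppose $\xi_i(z)=0$ for all $i\in J$, so that $Z$ lies in the stratum $\{x_i=\xi_i=0:i\in J\}$. Pick $i_0\in J$; the locus $\{x_{i_0}=\xi_{i_0}=0\}$ is the logarithmic cotangent bundle $T^*(D_{i_0},D')$ with $D'=(D-D_{i_0})|_{D_{i_0}}$, obtained from $T^*(X,D)$ by symplectic reduction along the Hamiltonian $x_{i_0}\partial_{i_0}$, and microlocally along it $\cM$ is identified with a generalized eigenspace $P_r$ (for some eigenvalue $r$ of $x_{i_0}\partial_{i_0}$) of the restriction $\cM/x_{i_0}\cM$ — the eigenspace decomposition being finite because $b_\cM({\bf x\partial})\cM\subseteq x_{i_0}\cM$, and each $P_r$ being a lattice over $(D_{i_0},D')$ of a regular holonomic $\shD$-module $N_r$ on $D_{i_0}\setminus D'$ by the theory of restrictions of lattices along a smooth divisor. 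Under this identification $Z$ becomes an irreducible component of the characteristic variety of $P_r$, with $\pi_{D_{i_0}}(Z)=\pi(Z)$. If $|J|\ge2$, then $\pi(Z)\subseteq D_l\cap D_{i_0}\subseteq D'$ for any $l\in J\setminus\{i_0\}$, contradicting the theorem for the pair $(D_{i_0},D')$, available by induction since $\dim D_{i_0}<\dim X$. If $|J|=1$, then $\pi(Z)\not\subseteq D'$, so by induction $Z$ is a component of $\SiS P_r=\overline\SiS N_r$, hence the closure of a component of $\SiS N_r$; since $N_r$ is a nearby-cycle constituent of $\cM_U$ along $D_{i_0}$, its characteristic variety closes up inside $\overline\SiS\cM_U$, so $Z\subseteq\overline\SiS\cM_U$ — whence $Z$ equals a component $\overline\Lambda_v$ of $\overline\SiS\cM_U$ and $\pi(Z)=\overline{\pi(\Lambda_v)}\not\subseteq D$, the final contradiction.

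\emph{The main obstacle.} Everything in the first paragraph and the one-divisor case of the second is routine; the work is entirely in the microlocal geometry of the third. The two facts I expect to be hard to set up and prove are: (a) that microlocalizing and symplectically reducing commute for logarithmic $\shD$-modules, so that the characteristic variety of $\cM$ inside a stratum $\{x_i=\xi_i=0:i\in J\}$ is computed by the restriction $\cM/(x_i:i\in J)\cM$ — this is what lets the induction on $\dim X$ run and also what reduces the general logarithmic case to the one-divisor case; and (b) that the characteristic varieties of the nearby-cycle constituents of $\cM_U$ along a boundary divisor are contained in the closure of $\SiS\cM_U$ inside $T^*(X,D)$. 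Both are precisely what the microlocalization of the sheaf of logarithmic differential operators is designed to deliver, and constitute the heart of Ginsburg's argument in \cite{Gil1}; the role of Theorem~\ref{thm:bflattice} here is to dispose cleanly of the genuinely logarithmic directions once one is reduced to a single smooth boundary divisor.
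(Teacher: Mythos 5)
The paper does not prove this theorem itself: it cites Ginsburg \cite[Appendix A]{Gil1} and only records that the argument ``uses the microlocalization of the sheaf of logarithmic differential operators.'' So there is no in-paper argument to compare against line by line. With that caveat, the broad shape of your proposal --- reduce to showing that no component of $\SiS\cM$ projects into $D$; dispose of the directions where some log symbol $\xi_i$ is invertible by a microlocal $b$-function/Nakayama argument built on Theorem~\ref{thm:bflattice}; and handle the remaining directions by symplectic reduction to $T^*(D_{i_0},D')$ and induction on $\dim X$ --- is consistent with the microlocal strategy the paper attributes to Ginsburg, and you correctly identify that the genuinely logarithmic directions are exactly where the $b$-function earns its keep.

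That said, the sketch is not a proof, and you are right to flag your points (a) and (b) as the crux: both the construction and properties of $\mathcal{E}_{X,D}$ (coherent stalks, local-ring structure at points of $\dot T^*(X,D)$ lying over $D$, faithfulness for the microsupport) and the compatibility of microlocalization with the generalized-eigenspace decomposition of $\cM/x_{i_0}\cM$ are substantial inputs that are only asserted, not supplied; this is precisely the content of Ginsburg's appendix. One further technical point worth making explicit in the single-divisor case: the map $b_\cM(x_1\partial_1)\colon\cM\to\cM$ is not $\shD_{X,D}$-linear, so deducing $b_\cM\,\mathcal N_z\subseteq x_1\mathcal N_z$ in $\mathcal N=\mathcal{E}_{X,D}\otimes_{\shD_{X,D}}\cM$ from $b_\cM(x_1\partial_1)\cM\subseteq x_1\cM$ requires observing that $x_1\partial_1$ is central in $\shD_{X,D}/x_1\shD_{X,D}$ (equivalently $[x_1\partial_1,P]\in x_1\shD_{X,D}$ for all $P\in\shD_{X,D}$, which one checks on generators), and likewise in the microlocal quotient, before Nakayama can be invoked. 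As written, your argument presupposes this without stating it. So: a reasonable reconstruction of the contour of Ginsburg's proof with the central technical machinery acknowledged but not built --- which is, after all, consistent with the paper's own treatment of the result as a black-box citation.
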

Suppose that $f\colon (X,D)\to (Y,E)$ is a morphism of smooth log pairs with $\dim X=n$ and $\dim Y=m$. We consider again the diagram of the log Lagrangian correspondence of $f$
\[T^*(X,D)\stackrel{p_1}{\longleftarrow}X\times_YT^*(Y,E)\stackrel{p_2}{\longrightarrow} T^*(Y,E).\]
Analogous to the functor $f_\sharp$, we define a morphism between Chow rings 
\[f_\sharp\colon A^\bullet_{T^*(X,D)}\rarr A^\bullet_{T^*(Y,E)}\]
by 
\[f_\sharp(\bullet)=p_{2*}(p_1^*(\bullet)),\]
where $A^\bullet_{T^*(X,D)}$ and $A^\bullet_{T^*(Y,E)}$ are Chow rings of $T^*(X,D)$ and $T^*(Y,E)$ respectively. 

We first recall some preliminaries of the intersection theory. The Riemann-Roch morphism $\tau$ is 
\[\tau\colon K_\coh(\sO_-)\rarr A^\bullet_-\]
given by $\tau_-(\bullet)=\ch(\bullet).\td(\shTA_-)$, where $-$ represents a complex algebraic variety and $\ch$ denotes the Chern character and $\td(\shTA_-)$ the Todd class of the tangent sheaf.  Let us list some Riemann-Roch formulas that are needed (see \cite[Chapter 15.]{Ful}):
\begin{enumerate}[label=\textup{(\roman*)}]
    \item $g_*\circ\tau\simeq \tau\circ Rg_*$, for every proper morphism $g$;\label{item:grr1}
    \item $\tau\circ Lh^*\simeq \td([T_h])\cdot(h^*\circ\tau)$ for every local complete intersection morphism $h$, where $[T_h]$ is the virtual tangent bundle of $h$; \label{item:grr2}
    \item $\tau(\beta \otimes \alpha)\simeq \ch(\beta)\cdot\tau(\alpha)$ for $\alpha\in K_\coh(\sO_-)$ and $\beta$ is a class of a vector bundle.\label{item:grr3}
\end{enumerate}
\begin{lemma}\label{lem:rrm}
For every lattice $\cM$ of a regular holonomic $\shD_U$-module,
we have 
$$\tau([\widetilde\gr^F_\bullet\cM]\otimes [\omega_f])=[\SiS\cM].$$
\end{lemma}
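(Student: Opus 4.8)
The plan is to reduce the statement to two facts: that the Chern character of a coherent sheaf computes its associated cycle in the lowest codimension in which the sheaf can sit, and that the logarithmic cotangent bundle carries no Chow classes in dimension below $n$, so that no lower-dimensional correction terms can occur at all. Reading $[\omega_f]$ as the class of the line bundle $\pi^*\omega_f$ on $T^*(X,D)$ and using property (iii) of $\tau$ together with its definition $\tau_-(\,\cdot\,)=\ch(\,\cdot\,)\cdot\td(\shTA_-)$, one has
\[
\tau\big([\widetilde{\gr}^F_\bullet\cM]\otimes[\omega_f]\big)=\ch(\pi^*\omega_f)\cdot\td\big(\shTA_{T^*(X,D)}\big)\cdot\ch\big(\widetilde{\gr}^F_\bullet\cM\big),
\]
where both $\ch(\pi^*\omega_f)$ and $\td(\shTA_{T^*(X,D)})$ are of the form $1+(\text{positive codimension})$. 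So it suffices to show $\ch(\widetilde{\gr}^F_\bullet\cM)=[\SiS\cM]$ and that multiplying by these two factors changes nothing.

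First I would invoke Ginsburg's Theorem \ref{thm:Gil}: since $\cM_U$ is regular holonomic, $\widetilde{\gr}^F_\bullet\cM$ is supported on $\SiS\cM=\overline{\SiS\cM_U}$, the closure of the characteristic variety of $\cM_U$, which has dimension $n=\dim X$. Next, $T^*(X,D)$ is the total space of the rank-$n$ bundle $\Omega^1_X(\log D)$ over the $n$-dimensional $X$, so flat pullback $\pi^*\colon A^\bullet_X\to A^\bullet_{T^*(X,D)}$ is an isomorphism and $A^i_{T^*(X,D)}=0$ for $i>n$. A coherent sheaf supported in codimension $\ge n$ has Chern character in $\bigoplus_{i\ge n}A^i_{T^*(X,D)}=A^n_{T^*(X,D)}$, and its component there is the associated cycle; by the very definition of the logarithmic characteristic cycle this component is $[\SiS\cM]$ (lower-dimensional components of the support, if present, contribute only in codimension $>n$ and therefore vanish). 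Hence $\ch(\widetilde{\gr}^F_\bullet\cM)=[\SiS\cM]\in A^n_{T^*(X,D)}$, and multiplying by $\ch(\pi^*\omega_f)\cdot\td(\shTA_{T^*(X,D)})=1+(\text{codim}\ge 1)$ keeps the class in $A^n_{T^*(X,D)}$ (anything of codimension $>n$ dies) and leaves it unchanged, which is the lemma.

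The only step demanding care is the identification of the codimension-$n$ part of $\ch$ with the associated cycle; this is the standard compatibility of $\ch$, equivalently of $\tau$, with the proper closed immersion of the support into $T^*(X,D)$ — Baum--Fulton--MacPherson Riemann--Roch, i.e. property (i) above — and I do not anticipate a genuine obstacle there. Note in particular that the twist by $[\omega_f]$ is irrelevant for this lemma; it is carried along only so that the statement can be fed directly into the direct-image formula of Corollary \ref{cor:imageK}, where the relative logarithmic canonical sheaf enters through the transfer module $\shD_f$.
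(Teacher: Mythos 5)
Your argument is correct and follows the paper's proof in all essentials: both use the codimension filtration on the Grothendieck and Chow groups, Ginsburg's Theorem \ref{thm:Gil} to know $\SiS\cM$ is pure of codimension $n$, the vanishing of $A^{>n}_{T^*(X,D)}$ because $T^*(X,D)\to X$ is an affine bundle of fiber dimension $n$, and the observation that twisting by the line bundle $\omega_f$ contributes only a factor $1+(\text{positive codimension})$. The only cosmetic difference is that you expand $\tau=\ch\cdot\td$ and work with $\ch$ directly, whereas the paper keeps $\tau$ and invokes its compatibility with the codimension filtration; these are the same argument.
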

\begin{proof} 
We know that $K_\coh(\sO_{-})$ has a decreasing filtration $F^\bullet$ by the codimension of supports. In the case of the claim, by the construction of multiplicity, we know  
\[[\widetilde\gr^F_\bullet\cM]\equiv \sum_pm_p\sO_{\bar p} \mod F^{n+1}\]
where $p$ goes over the generic points of the support of $\widetilde\gr^F_\bullet\cM$ and $m_p$ the multiplicity (by Theorem \ref{thm:Gil}, $\SiS\cM$ is of pure codimension $n$). Also, we  have 
\[\tau(\sO_{\bar p})\equiv \bar p \mod A^{> n}_{T^*{X,E}}=\bigoplus_{m>n}A^{m}_{T^*{(X,E)}}. \]
Since $\tau$ is compatible with the filtration $F^\bullet$, by the Riemann-Roch formula \ref{item:grr3} we conclude that 
\[\tau([\widetilde\gr^F_\bullet\cM]\otimes [\omega_f])=[\SiS\cM] \mod A^{> n}_{T^*(X,E)}.\]
Since $T^*(X,E)$ is an affine bundle with fiber dimension $n$, $A^{> n}_{T^*{(X,E)}}$ is trivial and the proof is finished. 
\end{proof}

\begin{theorem}\label{thm:GIT}
Let $(X,D)$ be a projective log smooth pair with $U=X\setminus D$. Assume that $\cM_U$ is a regular holonomic $\shD_{U}$-module and $\cM$ a lattice. Then we have 
\[\chi(X,\DR_D(\cM))=[\SiS\cM]\cdot[T^*_X(X,D)].\]
\end{theorem}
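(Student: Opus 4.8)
The plan is to deduce this from the Laumon-type formula for log $\shD$-modules (Corollary \ref{cor:imageK}), Grothendieck--Riemann--Roch, and Ginsburg's characteristic cycle theorem (Theorem \ref{thm:Gil}), exactly mimicking Laumon's algebraic proof of the Dubson--Kashiwara index equality. Take $f\colon (X,D)\to (pt)$ to be the structure map; since $X$ is projective, $f$ is proper, so everything below is legitimate. First I would pick a coherent (good) filtration $F_\bullet$ on $\cM$ — which exists since $\cM$ is coherent over $\shD_{X,D}$ — and equip the log de Rham complex $\DR_D(\cM)$ with the induced filtration as in \eqref{eq:logdrd} and the discussion following it, so that $\grf\DR_D(\cM)$ is a complex of $\sAG_{X,D}$-modules. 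By Lemma \ref{lem:resomega}, $\DR_D(\shD_{X,D})$ is a locally free resolution of $\omega_X(D)$ as a right $\shD_{X,D}$-module, so $\DR_D(\cM)\simeq \omega_X(D)\stackrel{\bf L}{\otimes}_{\shD_{X,D}}\cM$ computes $f_+\cM$ up to the relative canonical twist; more precisely, for $f$ to a point, $\shD_f=\omega_f=\omega_X(D)$ and $f_+\cM = R\Gamma(X,\DR_D(\cM))$.

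The key computation is to chase $[\widetilde{\gr}^F_\bullet\cM]\otimes[\omega_f]\in K_\coh(\sO_{T^*(X,D)})$ through $f_\sharp$ and through the Riemann--Roch morphism $\tau$. On the one hand, Lemma \ref{lem:rrm} gives $\tau\bigl([\widetilde{\gr}^F_\bullet\cM]\otimes[\omega_f]\bigr)=[\SiS\cM]$ in $A^\bullet_{T^*(X,D)}$, and then $f_\sharp$ applied to this Chow class computes the intersection number $[\SiS\cM]\cdot[T^*_X(X,D)]$: indeed $p_1^*[\SiS\cM]$ restricted via the zero-section correspondence and pushed forward by $p_2$ to the point is exactly the degree of the zero-cycle $[\SiS\cM]\cap[T^*_X(X,D)]$, because $X\times_{pt}T^*(pt)=X=T^*_X(X,D)$ sits inside $T^*(X,D)$ as the zero section and $p_1$ is that inclusion. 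On the other hand, by Corollary \ref{cor:imageK} applied to $(\DR_D(\cM),F_\bullet)$ — noting $f_+\DR_D(\cM)=f_+\cM$ up to the twist already accounted for — we get
\[
f_\sharp\bigl[\widetilde{\gr}^F_\bullet\DR_D(\cM)\bigr]=\sum_i(-1)^i\bigl[\widetilde{\gr}^F_\bullet\sH^if_+\DR_D(\cM)\bigr]
\]
in $K_\coh(\sO_{pt})=K_\coh(\bbC)\cong\Z$, and the right-hand side is precisely $\sum_i(-1)^i\dim_\bbC \bH^i(X,\DR_D(\cM))=\chi(X,\DR_D(\cM))$ since over a point the Rees/graded module of a finite-dimensional vector space has total dimension equal to that of the space. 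Finally I would identify $[\widetilde{\gr}^F_\bullet\DR_D(\cM)]$ with $[\widetilde{\gr}^F_\bullet\cM]\otimes[\omega_f]$ in $K_\coh(\sO_{T^*(X,D)})$: this is the statement that the graded log de Rham complex, built from the Koszul-type resolution of $\omega_X(D)$ in Lemma \ref{lem:resomega}, has class equal to the product of the class of $\widetilde{\gr}^F_\bullet\cM$ with the Koszul class, and the latter Koszul class on the log cotangent bundle cancels in $K$-theory against the normal-bundle contribution exactly as in the non-logarithmic case, leaving the class $[\widetilde{\gr}^F_\bullet\cM]\otimes[\omega_f]$. Comparing the two evaluations of $f_\sharp$ applied to this class, and using $\deg\circ\,\tau=\chi$ on a point (Riemann--Roch formula \ref{item:grr1} for the proper map $f$), yields $\chi(X,\DR_D(\cM))=[\SiS\cM]\cdot[T^*_X(X,D)]$.

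I expect the main obstacle to be the bookkeeping that makes the two appearances of $f_\sharp$ — the sheaf-theoretic functor $f_\sharp\colon D^b_\coh(\sO_{T^*(X,D)})\to D^b_\coh(\sO_{pt})$ of Corollary \ref{cor:imageK}, and the Chow-theoretic $f_\sharp\colon A^\bullet_{T^*(X,D)}\to A^\bullet_{pt}$ — genuinely agree after applying $\tau$; this requires the compatibility $\tau\circ f_\sharp=f_\sharp\circ\tau$ on the log Lagrangian correspondence, which one proves by combining the Riemann--Roch formulas \ref{item:grr1}, \ref{item:grr2}, \ref{item:grr3} for the proper map $p_2$, the (log) regular embedding $p_1$ and the canonical-bundle twist $\omega_f$, exactly as Laumon does for ordinary $\shD$-modules in \cite[\S6]{Laumon}. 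Care is needed because $T^*(X,D)$ is only a log symplectic manifold and $p_1$ is the zero-section inclusion rather than a general lci morphism; but since the zero section is a regular embedding with normal bundle $\shTA_X(-\log D)$, formula \ref{item:grr2} applies with the virtual tangent bundle $[T_{p_1}]=-[\shTA_X(-\log D)]$, and the Todd class correction is absorbed precisely by the Todd class already present in $\tau$ via the defining formula $\tau_-(\bullet)=\ch(\bullet).\td(\shTA_-)$ on the total space $T^*(X,D)$ versus on $X$. Once this compatibility is in place, the theorem follows by evaluating at the single class $[\widetilde{\gr}^F_\bullet\cM]\otimes[\omega_f]$ and reading off degrees.
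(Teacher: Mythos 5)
Your overall strategy coincides with the paper's: take $f\colon(X,D)\to\Spec\bbC$, apply the Laumon-type formula (Corollary~\ref{cor:imageK}) on the $K$-theory side, apply Grothendieck--Riemann--Roch and Lemma~\ref{lem:rrm} on the Chow side, and close the loop with Lemma~\ref{lem:resomega}. The first and last paragraphs of your proposal, including the careful discussion of the Todd-class bookkeeping for the zero-section inclusion $p_1$, are in line with what the paper does.

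There is, however, a genuine error in the middle step. You propose to apply Corollary~\ref{cor:imageK} to the filtered complex $(\DR_D(\cM),F_\bullet)$, but $\DR_D(\cM)$ is \emph{not} an object of $\DF^b_\coh(\shD_{X,D})$: its terms $\Omega^p_X(\log D)\otimes\cM$ do not carry compatible $\shD_{X,D}$-module structures and its differentials are only $\bbC$-linear, so $f_+\DR_D(\cM)$ has no meaning as a direct image of log $\shD$-modules. The corollary must instead be applied to $\cM$ itself (placed in degree $0$ in $\DF^b_\coh(\shD_{X,D})$), which gives $f_\sharp[\widetilde{\gr}^F_\bullet\cM]=\sum_i(-1)^i\dim\sH^i f_+\cM$ directly in $K_\coh(\sO_{\mathrm{pt}})\cong\Z$; the link to $\chi(X,\DR_D(\cM))$ then comes from Lemma~\ref{lem:resomega}, which gives $f_+\cM\simeq Rf_*\DR_D(\cM)=R\Gamma(X,\DR_D(\cM))$ because $\DR_D(\shD_{X,D})$ resolves $\omega_X(D)=\shD_f$. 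Relatedly, your claimed identification $[\widetilde{\gr}^F_\bullet\DR_D(\cM)]=[\widetilde{\gr}^F_\bullet\cM]\otimes[\omega_f]$ in $K_\coh(\sO_{T^*(X,D)})$ is false: the alternating sum of the graded de Rham terms is $[\widetilde{\gr}^F_\bullet\cM]$ multiplied by the full Koszul class $\sum_p(-1)^{p-n}[\pi^*\Omega^p_X(\log D)]$, which is not the class of $\omega_f$ alone, and its scheme-theoretic support sits inside $\SiS\cM\cap T^*_X(X,D)$ rather than on $\SiS\cM$. The $\omega_f$ twist that you are trying to account for is already built into the definition of $f_\sharp$, so there is nothing to cancel. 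Once you replace the misapplied step with Corollary~\ref{cor:imageK} on $\cM$ plus Lemma~\ref{lem:resomega}, the argument matches the paper's.
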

\begin{proof}
We consider the log Lagrangian correspondence for the constant map $f\colon (X,D)\to \Spec\textup{ } \bbC$:
\[T^*(X,D)\stackrel{p_1}{\longleftarrow}X\stackrel{p_2}{\longrightarrow}\Spec{\textup{ }}\bbC.\]
In this case, the Riemann-Roch morphism over $\Spec\textup{ }\bbC$ is just taking rank of vector spaces. By Corollary \ref{cor:imageK} we hence have 
\begin{equation}\label{eq:rr1}
    \tau(p_{2*}p_1^*([\widetilde\gr \cM]\otimes[\omega_f]))=f_\sharp[\widetilde\gr \cM]=\sum_i (-1)^i h^i(f_+\cM).
\end{equation}

Since $p_1$ is identified with the closed embedding $T^*_X(X,D)\to T^*(X,D)$, by Lemma \ref{lem:rrm} we have 
\[p_1^*\tau([\widetilde\gr \cM]\otimes[\omega_f]))=p_1^*[\SiS \cM]=[\SiS\cM\cdot T^*_X(X,D)]\]
in $A^n_X$. Since $[\SiS\cM\cdot T^*_X(X,D)]$ is a zero-cycle on $X$, by the Riemann-Roch formulas \ref{item:grr1} and \ref{item:grr2}, we have
\begin{equation}\label{eq:rr2}
p_{2*}p_1^*[\SiS \cM]=[\SiS\cM]\cdot[T^*_X(X,D)],
\end{equation}
the degree of the zero cycle $[\SiS\cM\cdot T^*_X(X,D)]$ in $X\simeq T^*_X(X,D)$.

Finally, by Lemma \ref{lem:resomega}, we have a resolution
$$\DR_D(\shD_{X,D})\to \omega_X(D)=\omega_f,$$ 
and hence we get
\begin{equation}\label{eq:rr3}
f_+\cM\simeq Rf_{*}\DR_D(\cM).
\end{equation}
The proof is done by combining \eqref{eq:rr1}, \eqref{eq:rr2} and \eqref{eq:rr3}. 
\end{proof}


\section{Logarithmic deformations of characteristic cycles and a non-compact Riemann-Roch theorem}\label{sec:noncrr}

\subsection{Characteristic cycle of $\shD_{X,D}[\bs](\bff^\bs\cdot\cM)$ for rational functions}\label{sect:crf}

Suppose that $(X,D)$ is a smooth log pair and $\bff=(f_1,\dots,f_k)$ is a $k$-tuple of rational functions on $X$ satisfying that the divisor of $f_l$ is supported on $D$ for each $l=1,\dots,k$.
Let $\cM_{U}$ be a regular holonomic $\shD_{U}$-module and $\cM$ be a $\shD_{X,D}$-lattice. 

We introduce independent variables $\bs=(s_1,\dots,s_k)$. Since the divisor of $f_l$ is supported on $D$, we consider the $\shD_{X,D}[\bs]$-module
\[\shD_{X,D}[\bs]( \bff^\bs\cdot\cM)\subseteq \bff^\bs\cdot j_*\cM_U[\bs].\]

By assigning the extra variables $s_l$ of order 1, the order filtration $F_\bullet$ on $\shD_{X,D}$ induces an order filtration $F_\bullet$ on $\shD_{X,D}[\bs]$ so that 
\[\grf\shD_{X,D}[\bs]=(\grf\shD_{X,D})[\bs].\]
We then can further identify $\grf\shD_{X,D}[\bs]$ with rings of functions on $T^*(X,D)\times\bbC^k$:
\[\grf\shD_{X,D}[\bs]=\pi_*\sO_{T^*(X,D)}[\bs].\]
We then can define the characteristic cycle of $\shD_{X,D}[\bs](\bff^\bs\cdot\cM)$ similar to that of coherent $\shD_{X,D}$-modules, denoted by 
\[\SiS(\shD_{X,D}[\bs](\bff^\bs\cdot\cM)).\]
It is a conic cycle in $T^*(X,D)\times\bbC^k$. 

Following Kashiwara \cite{KasBf} and Ginsburg \cite[\S2.2]{Gil}, we define the log analogue of $\Lambda^\sharp$. For a conic cycle $\Lambda$ of $T^*(X,D)$ with $\dim \Lambda=n$, since $d\log f_l(x)$ is a section of $T^*(X,D)$ for each $l$, we define the $n+k$-dimensional cycle $\Lambda^\sharp_\bff\subset T^*(X,D)\times \bbC^k$:
\[\Lambda^\sharp_\bff=\{(\xi+\sum_{l=1}^k s_l\cdot d\log f_l(x),\bs)|\xi\in \Lambda, \pi(\xi)=x, \bs=(s_1,\dots,s_k)\in \bbC^k\}.\]

\begin{theorem}\label{thm:logcycle}
With notations as above, for every lattice $\cM$, we have 
\[\SiS(\shD_{X,D}[\bs](\bff^{\bs}\cdot\cM))=\SiS^\sharp_{\bff}\cM\]
in $T^*(X,D)\times \bbC^k$.
\end{theorem}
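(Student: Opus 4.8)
The argument is the logarithmic counterpart of the computation of Kashiwara and Ginsburg for $\shD_X[\bs](\bff^\bs\cdot\cM)$ in the non-logarithmic setting; the only genuinely new input is that the hypothesis that the divisor of each $f_l$ is supported on $D$ is exactly what makes conjugation by $\bff^\bs$ respect the logarithmic structure. Since characteristic cycles are determined by local data on $X$, I would first reduce to an affine open over which $\cM$ carries a coherent filtration and each $f_l$ is a unit times a monomial in the local equations of the components of $D$.

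\emph{Step 1: a logarithmic twisting automorphism.} For a logarithmic vector field $\theta\in\shTA_X(-\log D)$ one has $\theta(\log f_l)=\theta(f_l)/f_l=\langle d\log f_l,\theta\rangle$, and because $\theta$ preserves the ideal of each component of $D$ while the divisor of $f_l$ is supported on $D$, this quotient is a regular function on $X$. Hence the rule $\theta\mapsto\theta+\sum_l s_l\,\theta(\log f_l)$ on $\shTA_X(-\log D)$, together with the identity on $\sO_X[\bs]$, extends uniquely to a filtered ring automorphism $c_\bff\colon\shD_{X,D}[\bs]\xrightarrow{\ \sim\ }\shD_{X,D}[\bs]$, $c_\bff(P)=\bff^{-\bs}P\bff^{\bs}$, with inverse $c_{\bff^{-1}}$, where $s_l$ is given order $1$. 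Passing to associated graded algebras, $\grf\shD_{X,D}[\bs]=\sAG_{X,D}[\bs]=\pi_*\sO_{T^*(X,D)\times\bbC^k}$, the automorphism $\grf c_\bff$ sends $\sigma(\theta)$ to $\sigma(\theta)+\sum_l s_l\langle d\log f_l,\theta\rangle$, so it is the pullback $\phi_\bff^{\,*}$ along the automorphism
\[\phi_\bff\colon T^*(X,D)\times\bbC^k\xrightarrow{\ \sim\ }T^*(X,D)\times\bbC^k,\qquad(\xi,\bs)\mapsto\Big(\xi+\sum_l s_l\,d\log f_l(\pi\xi),\ \bs\Big),\]
which is precisely the map appearing in the definition of $\Lambda^\sharp_\bff$.

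\emph{Step 2: the module as a twist, and a coherent filtration.} Since $\langle d\log f_l,\theta\rangle\in\sO_X$, the $\bbC[\bs]$-submodule $\bff^{\bs}\cdot\cM[\bs]\subseteq\bff^{\bs}\cdot j_*\cM_U[\bs]$ is stable under $\shD_{X,D}[\bs]$, with $P\cdot(\bff^{\bs}m)=\bff^{\bs}\cdot(c_\bff(P)m)$; applying $c_{\bff^{-1}}$ to a finite set of $\shD_{X,D}$-generators of $\cM$ shows that $\bff^{\bs}\cdot\cM$ generates all of $\bff^{\bs}\cdot\cM[\bs]$ over $\shD_{X,D}[\bs]$. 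Thus $\shD_{X,D}[\bs](\bff^{\bs}\cdot\cM)=\bff^{\bs}\cdot\cM[\bs]\cong\cM[\bs]^{c_\bff}$, the coherent $\shD_{X,D}[\bs]$-module $\cM[\bs]$ with the $c_\bff$-twisted action. Given a coherent filtration $F_\bullet\cM$ over $\shD_{X,D}$, set $F_p\cM[\bs]=\sum_{i+j\le p}(F_i\cM)\cdot\bbC[\bs]_{\le j}$, with $\bbC[\bs]_{\le j}$ the polynomials of degree $\le j$. This is a coherent filtration of $\cM[\bs]$ over $\shD_{X,D}[\bs]$ with $\grf\cM[\bs]=(\grf\cM)[\bs]$; since $c_\bff$ preserves the order filtration, the same $F_\bullet$ is a coherent filtration for $\cM[\bs]^{c_\bff}$, whose associated graded sheaf on $T^*(X,D)\times\bbC^k$ is the $\phi_\bff^{\,*}$-twist of the one attached to $(\grf\cM)[\bs]$.

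\emph{Step 3: reading off the cycle.} The coherent sheaf on $T^*(X,D)\times\bbC^k$ attached to $(\grf\cM)[\bs]$ has support $|\SiS\cM|\times\bbC^k$ and characteristic cycle $\SiS\cM\times\bbC^k$; twisting by $\phi_\bff^{\,*}$ replaces this sheaf by its image under the isomorphism $\phi_\bff$, which preserves all multiplicities. Hence
\[\SiS\big(\shD_{X,D}[\bs](\bff^{\bs}\cdot\cM)\big)=\phi_{\bff*}\big(\SiS\cM\times\bbC^k\big)=\SiS^\sharp_\bff\cM,\]
the last equality being the very definition of $\Lambda^\sharp_\bff$ (and $\SiS\cM$ is pure of dimension $n$ by Theorem \ref{thm:Gil}, so $\SiS^\sharp_\bff\cM$ is defined). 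Gluing the local computations gives the stated identity.

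I expect the main obstacle to be making Step 1 airtight rather than any single calculation: one must verify that conjugation by $\bff^{\bs}$ sends $\shD_{X,D}[\bs]$ into itself — not merely into $\shD_X[\bs]$ — i.e.\ that no poles along $D$ are created, which is exactly where the hypothesis that each $\operatorname{div}(f_l)$ is supported on $D$ enters. Once that is in hand, the rest is formal: the filtration bookkeeping, the fact that $\grf c_\bff$ realizes the geometric shift $\phi_\bff$, and the comparison with the definition of $\Lambda^\sharp_\bff$. A secondary point deserving care is the equality $\shD_{X,D}[\bs](\bff^{\bs}\cdot\cM)=\bff^{\bs}\cdot\cM[\bs]$ of submodules of $\bff^{\bs}\cdot j_*\cM_U[\bs]$, which again relies on $c_{\bff^{-1}}$ preserving $\shD_{X,D}[\bs]$.
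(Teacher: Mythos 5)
Your argument is correct and takes essentially the same route as the paper's: both proofs introduce the conjugation automorphism $\bff^{-\bs}\cdot P\cdot\bff^{\bs}$ of $\shD_{X,D}[\bs]$ (the paper's $\eta_\bff$, your $c_\bff$), observe that it preserves the order filtration and identifies $\shD_{X,D}[\bs](\bff^\bs\cdot\cM)$ with $\cM[\bs]$, transport a coherent filtration across this isomorphism, and read off the characteristic cycle via the induced automorphism $\phi_\bff$ of $T^*(X,D)\times\bbC^k$. The one point you spell out that the paper leaves implicit — that $\theta(\log f_l)=\theta(f_l)/f_l\in\sO_X$ for $\theta\in\shTA_X(-\log D)$, which is exactly where the hypothesis $\operatorname{div}(f_l)\subseteq D$ enters — is a useful clarification but not a different argument.
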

Then we obtain an algebraic family of log-Lagrangian subvarieties $$p_2\colon\SiS^\sharp_{\bff}\cM\to \bbC^k,$$ with the central fiber at $0$ a $n$-dimensional conic cycle. Hence, $p_2$ gives a deformation of $\overline\SiS\cM_U$ in the logarithmic cotangent bundle $T^*(X,D)$ (by Theorem \ref{thm:Gil}).

\begin{proof}[Proof of Theorem \ref{thm:logcycle}]
Since the divisor of $f_l$ is supported on $D$, we know that for $P(s)\in \shD_{X,D}[\bs]$
\[P(\bs)\mapsto\bff^{-\bs}\cdot P(\bs)\cdot \bff^\bs\]
defines an automorphism $\eta_\bff$ of $\shD_{X,D}[s]$ and it induces an isomorphism 
\[\eta_\cM\colon\shD_{X,D}[\bs](\bff^{\bs}\cdot\cM)\simeq \cM[\bs]\]
by 
\[\eta_\cM(\bff^\bs \cdot u)=u\]
compatible with $\eta_\bff$.
The isomorphism $\eta_\bff$ induces an isomorphism 
$$\eta_\bff\colon T^*(X,D)\times \bbC^k\to T^*(X,D)\times \bbC^k$$
by 
\[(x,\xi,\bs)\mapsto (x,  \xi+\sum_{l=1}^ks_ld\log f_l,\bs).\]
We then have
\begin{equation}\label{eq:eta111}
    \eta_\bff(\SiS\cM)=\SiS^\sharp_\bff\cM.
\end{equation}

Now we fix a coherent filtration $(\cM,F_\bullet)$ and define a filtration for $\shD_{X,D}[\bs](\bff^{\bs}\cdot\cM)$ by 
\[F_p(\shD_{X,D}[\bs](\bff^{\bs}\cdot\cM))=\sum_{|{\bf i}|+j=p}\bs^{\bf i}\cdot \bff^\bs\cdot F_j\cM,\]
where $\bs^{\bf i}=s_1^{i_1}\cdot s_2^{i_2}\cdots s_k^{i_k}$. We also define a filtration on $\cM[\bs]$ by
\[F_p\cM[\bs]=\sum_{|{\bf i}|+j=p}\bs^{\bf i}\cdot F_j\cM.\]
Since $\eta_\bff$ preserves the order filtration of $\shD_{X,D}[s]$, $\eta_\cM$ is also a filtration-preserving isomorphism. 
Hence, we obtain that 
\[\SiS(\shD_{X,D}[\bs](\bff^{\bs}\cdot\cM))=\supp(\gr^F_\bullet(\shD_{X,D}[\bs](\bff^{\bs}\cdot\cM))=\eta_\bff(\SiS(\cM)\times\bbC^r).\]
The proof is now done by \eqref{eq:eta111}.
\end{proof}

\begin{proof}[Proof of Theorem \ref{thm:openRR}]
 We let $\cM_U$ be the regular holonomic $\shD_U$-module of $\sF^\bullet$ under Riemann-Hilbert correspondence and let $\cM$ be a $\shD_{X,D}$-lattice. For simplicity, we write $\Lambda=\overline\SiS(\sF^\bullet)$, which is also the characteristic cycle of $\cM$. Then the family $p_2\colon\Lambda_f^\sharp\to \bbC$ gives a deformation of the central fiber $\Lambda$ inside $T^*(X,D)$. 

By Theorem \ref{thm:logind}, we know that 
\[\chi(U, \cF^\bullet)=\chi(X,\DR_D(\cM))=[\Lambda]\cdot[T^*_X(X,D)]. \]
Since $p_2$ is a deformation of $\Lambda$, we know for every $0\not= s_0\in \bbC$
\[[\Lambda]\cdot[T^*_X(X,D)]=[(\Lambda_f^\sharp)_{s_0}]\cdot[T^*_X(X,D)].\]
But since $\Lambda$ is conic, we know 
\[[(\Lambda_f^\sharp)_{s_0}]\cdot[T^*_X(X,D)]=\sum_{v}n_v\gdeg_f^{\log}(\Lambda_v).\]
\end{proof}

\section{An alternative proof of Theorem \ref{thm:logind}}\label{sec:rrtop}

In this section, we will work with the real cotangent bundles $T^*U$ and $T^*(X, D)$ that underly their complex counterparts.

First, we recall a result of Kashiwara for the Euler characteristic on a non-compact set \cite{Kash-index}. 

\begin{theorem}[Kashiwara]\label{thm:Kasind}
Let $U$ be a complex $n$-dimensional quasi-projective variety. Let $\sF$ be a $\R$-constructible sheaf on it. 
Let $\varphi: U \to \R$ be a $C^2$-function. Set $\Gamma_{d \varphi} = \{ d\varphi(x): x \in U\} \In T^*U$. We assume that ${\rm supp} \, F \cap  \{\varphi(x) \leq t \}$ is compact for any $t$ and $\SiS(\sF) \cap \Gamma_{d\varphi}$ is compact. Then we have 
\[ \dim H^j(U; \sF) < \infty \]
for any $j$ and
\[ \chi(U, \sF) = (-1)^{n} \SiS(\sF) \cdot \Gamma_{d \varphi}. \]
\end{theorem}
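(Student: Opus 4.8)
The plan is to evaluate $\chi(U,\sF)$ by microlocal Morse theory: shrink $U$ to a large sublevel set of $\varphi$, decompose the Euler characteristic into local contributions attached to the (after a small perturbation, finitely many) points of $\Gamma_{d\varphi}\cap\SiS(\sF)$, and then identify each local contribution with the corresponding local intersection multiplicity, the latter identification being the local form of the Dubson--Kashiwara formula \eqref{eq:kdind}. Concretely, I would first fix a Whitney stratification $U=\bigsqcup_\alpha S_\alpha$ adapted to $\sF$, so that $\SiS(\sF)\subseteq\bigcup_\alpha\overline{T^*_{S_\alpha}U}$, and replace $\varphi$ by an arbitrarily $C^2$-close function $\psi$ which is a stratified Morse function and for which $\Gamma_{d\psi}$ meets $\SiS(\sF)$ transversally in $T^*U$. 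Since $\Gamma_{d\varphi}\cap\SiS(\sF)$ is compact and the perturbation is small, the two properness hypotheses persist for $\psi$, the set $\Gamma_{d\psi}\cap\SiS(\sF)$ stays inside a fixed compact subset of $T^*U$, and the localized intersection number is unchanged, so $\SiS(\sF)\cdot\Gamma_{d\psi}=\SiS(\sF)\cdot\Gamma_{d\varphi}$. After this replacement $\Gamma_{d\psi}\cap\SiS(\sF)$ is a finite set $\{p_1,\dots,p_N\}$ over points $x_i\in\supp(\sF)$, with pairwise distinct critical values $c_1<\dots<c_N$.

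Next I would run the Morse argument on the sublevel sets $U_t=\{\psi<t\}$. Because $\{\psi\le t\}\cap\supp(\sF)$ is compact for every $t$, the finite-intersection property produces a value $c_0<c_1$ with $U_{c_0}\cap\supp(\sF)=\emptyset$, whence $R\Gamma(U_{c_0};\sF)=0$. For $c>c_N$ the increasing family $\{U_t\}_{t\ge c}$ has union $U$ and satisfies Kashiwara--Schapira's non-characteristic deformation lemma (no boundary conormal of $U_t$ meets $\SiS(\sF)$ for $t\ge c$, and $\overline{U_s\setminus U_t}\cap\supp(\sF)$ is compact for $c\le t<s$), so $R\Gamma(U;\sF)\simeq R\Gamma(U_c;\sF)$; in particular all $H^j(U;\sF)$ are finite-dimensional, which gives the first assertion. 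Crossing a single critical level $c_i$, the microlocal Morse lemma identifies a cone of $R\Gamma(U_{c_i+\eta};\sF)\to R\Gamma(U_{c_i-\eta};\sF)$ with a bounded complex of finite-dimensional vector spaces $M_i$, the local Morse datum of $\sF$ at $x_i$ relative to $\psi$, as soon as $(c_i-\eta,c_i+\eta)$ contains no other $c_j$. Telescoping from $c_0$ through $c_1<\dots<c_N$ up to $c$ then yields $\chi(U,\sF)=\sum_{i=1}^N\chi(M_i)$.

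Finally I would prove the local identity $\chi(M_i)=(-1)^n m_i$, where $m_i$ is the local intersection number of $\Gamma_{d\psi}$ with $\SiS(\sF)$ at $p_i$; summing over $i$ then gives $\chi(U,\sF)=(-1)^n\sum_i m_i=(-1)^n\,\SiS(\sF)\cdot\Gamma_{d\psi}=(-1)^n\,\SiS(\sF)\cdot\Gamma_{d\varphi}$. This local identity is the local counterpart of \eqref{eq:kdind}: unwinding the construction of $\SiS(\sF)$ as a Lagrangian cycle, its multiplicity along $T^*_{S_\alpha}U$ is assembled from exactly the vanishing-cycle Euler characteristics of $\sF$ that also compute $\chi(M_i)$, and the transversality of $\Gamma_{d\psi}$ with $T^*_{S_\alpha}U$ converts that multiplicity into $m_i$; the sign $(-1)^n$ is the one already built into the compact normalization. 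I expect the real obstacle to be the perturbation step together with the claim that $\SiS(\sF)\cdot\Gamma_{d\varphi}$ is well defined and stable under small deformations --- i.e.\ that during the deformation the localized intersection never escapes to infinity --- whereas the local comparison in the last step is standard stratified/microlocal Morse theory.
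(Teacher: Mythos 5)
The paper does not prove this theorem: it is quoted as a known input from Kashiwara's paper \cite{Kash-index}, with only the sign remark following the statement, so there is no in-paper argument to compare your proposal against. Your sketch does reproduce the standard stratified/microlocal Morse-theoretic proof found in Kashiwara's original paper and in Kashiwara--Schapira's treatment (\emph{Sheaves on Manifolds}, \S 9.5): perturb $\varphi$ to a stratified Morse function $\psi$ whose graph meets $\SiS(\sF)$ transversally in a fixed compact region, kill $R\Gamma(U_{c_0};\sF)$ below the critical range using the first compactness hypothesis, stabilize $R\Gamma(U_t;\sF)$ above the critical range via the non-characteristic deformation lemma, telescope the local Morse data across critical levels, and invoke the local index identity to convert each Euler characteristic $\chi(M_i)$ into a local intersection multiplicity. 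That skeleton is correct.

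Two points are genuinely load-bearing and should be argued rather than gestured at. First, the perturbation step: since $\SiS(\sF)$ is non-compact, you must show that for a $C^1$-small perturbation the intersection $\SiS(\sF)\cap\Gamma_{d\psi}$ remains in a fixed compact subset of $T^*U$ and both properness hypotheses persist --- otherwise the intersection number, which is defined only because of the compactness hypothesis, is not homotopy invariant; you flag this, but it is exactly where the hypothesis $\SiS(\sF)\cap\Gamma_{d\varphi}$ compact is consumed. Second, be careful not to present the local identity $\chi(M_i)=(-1)^n m_i$ as a consequence of \eqref{eq:kdind}; the logical order runs the other way, and \eqref{eq:kdind} is itself derived from the local identity. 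The correct citation is to the local formula directly (Kashiwara--Schapira Proposition 9.5.1 together with their Lagrangian-cycle formalism in \S 9.3--9.4, where the multiplicity of $\SiS(\sF)$ along a stratum's conormal is \emph{defined} by the Euler characteristic of the local Morse datum of a generic covector, and the $(-1)^n$ emerges from orientation conventions in the complex case). Your parenthetical explanation of why the identity holds --- ``unwinding the construction of $\SiS(\sF)$ as a Lagrangian cycle'' --- is in fact this argument, so the content is right; just phrase it as the definition of the multiplicities rather than as a consequence of the compact index theorem.
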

Note the extra $(-1)^n$ is due to the degree shift between constructible sheaf and perverse sheaf.




In this paper, we work not in $T^*U$ but in its log compactification $T^*(X, D)$. We shall choose a smooth real perturbation of zero section $\Gamma_{d\varphi} \In T^*U$ such that it extends to a smooth section $\wb{\Gamma}_{d\varphi}$ in $T^*(X, D)$. This $ \wb{\Gamma}_{d\varphi}$  serves as a perturbation of zero-section $T^*_X(X, D)$. 

\subsection{Real log cotangent bundle and perturbation of zero section}
First, we recall that every complex rank $n$ vector bundle $E \to X$ has an underlying real rank $2n$ vector bundle, denoted as $E_\R$. And since intersection of cycles are topological notion, we may perform smooth real perturbation to the zero section, instead of algebraic or holomorphic ones. 

Pick a point $z$ on the snc divisor $D$. Assume there are $r$ irreducible components intersects at $z$. We choose local complex coordinates centered at $z$, denoted as $x_1, \cdots, x_r, y_1, \cdots, y_k$, such that such that $D = \{x_1\cdots x_r=0\}$. The complex vector bundle $T^*(X, D)$ then has a local frame 
\[d \log x_1, \cdots, d \log x_r, d y_1, \cdots, dy_k. \]

For the underlying real vector bundle $T^*(X, D)_\R$, we have obtain the local frame by taking the real and imaginary parts of the above holomorphic local frame
\begin{align} & 
d \log \rho_1, d \theta_1 , \cdots, d \log \rho_r, d \theta_r, \quad x_i = \rho_i e^{i \theta_i} \notag \\
& d y_{1,R}, dy_{1,I}, \cdots, dy_{k,R}, d y_{k, I}, \quad y_i = y_{i,R} + \sqrt{-1} y_{i, I}. \label{coord}
\end{align}

Let $H \In X$ be a smooth complex hypersurface. A function $\rho_H: X \to \R$ is a {\em real defining function for $H$}, if for any local holomorphic defining function $t_\alpha: U_\alpha \to \C$ of $H \cap U_\alpha$, we have $\rho_H|_{U_\alpha} = f_\alpha |t_\alpha|$. By a partition of unity argument, we see the real defining function for $H$ exists, and is unique up to multiplication by smooth positive functions. 

Similarly, if $D=\cup_i D_i$ is a simple normal crossing divisor, we say $\rho_D: X \to \R$ is a real defining function for $D$,  if locally $\rho_D$ equals the modulus of a complex defining function of $D$ up to multiplication by smooth positive function. Clearly, we may take $\rho_D = \prod_i \rho_{D_i}$, hence $\rho_D$ exists as well. 



Let $\rho$ be a real defining function for $D$. Then the log closure
\[ \Gamma_{d \log \rho} := \wb{\Gamma}_{d \log \rho|_U} \In T^*(X, D) \]
is a smooth section in $T^*(X, D)$. 

\begin{prop} \label{dlogrho}
Let $z \in D$, and let \eqref{coord} be local coordinates around $z$. Then 
\[ {d \log \rho}|_z = d\log \rho_1 + \cdots + d\log \rho_1 + \sum_{j=1}^k (a_j d y_{j,R} + b_j d y_{j, I}) , \quad a_j, b_j \in \R.\]
\end{prop}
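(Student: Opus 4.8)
The plan is to reduce to the local holomorphic normal form and then to identify the ``smooth part'' of $d\log\rho$ in the frame \eqref{coord}. Work on the chart around $z$ of \eqref{coord}, where $D=\{x_1\cdots x_r=0\}$, $\rho_i=|x_i|$, and $z\in D_i=\{x_i=0\}$ for $i=1,\dots,r$. Since $x_1\cdots x_r$ is a holomorphic defining function of $D$ on this chart, the definition of a real defining function provides a smooth positive function $f$ on the chart with $\rho=f\cdot\rho_1\cdots\rho_r$. (This costs nothing: two real defining functions of $D$ differ by a smooth positive factor, which changes $d\log\rho$ only by a $1$-form of the same type as the $d\log f$ treated below.) On $U$ we then have the identity of $1$-forms $d\log\rho=d\log f+\sum_{i=1}^r d\log\rho_i$, where each $d\log\rho_i=\textup{Re}(d\log x_i)$ is, by construction, the frame element $d\log\rho_i$ of \eqref{coord} and extends smoothly over $z$. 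Hence, near $z$, $\overline\Gamma_{d\log\rho|_U}$ equals the smooth section $d\log f+\sum_{i=1}^r d\log\rho_i$ of $T^*(X,D)_\R$, and it remains to evaluate $d\log f$ at $z$.

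Viewing $d\log f$ as a smooth section of the real cotangent bundle $T^*X_\R$ and passing to $T^*(X,D)_\R$ via the map $T^*X_\R\to T^*(X,D)_\R$ underlying $T^*X\to T^*(X,D)$, I would compute the images of the real coordinate frame $dx_{i,R},dx_{i,I}$. From $dx_i=x_i\cdot d\log x_i$, $x_i=\rho_i e^{\sqrt{-1}\theta_i}$, and $d\log x_i=d\log\rho_i+\sqrt{-1}\,d\theta_i$ one gets
\[ dx_{i,R}=\rho_i\bigl(\cos\theta_i\,d\log\rho_i-\sin\theta_i\,d\theta_i\bigr),\qquad dx_{i,I}=\rho_i\bigl(\sin\theta_i\,d\log\rho_i+\cos\theta_i\,d\theta_i\bigr), \]
so each is $\rho_i$ times a smooth section of $T^*(X,D)_\R$ and hence vanishes at $z$, while $dy_{j,R},dy_{j,I}$ map to themselves. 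Writing $d\log f=\sum_i(A_i\,dx_{i,R}+B_i\,dx_{i,I})+\sum_j(a_j\,dy_{j,R}+b_j\,dy_{j,I})$ with smooth real coefficients, we conclude $d\log f|_z=\sum_{j=1}^k\bigl(a_j(z)\,dy_{j,R}+b_j(z)\,dy_{j,I}\bigr)$ in $T^*_z(X,D)_\R$. Combining with the first paragraph yields $d\log\rho|_z=d\log\rho_1+\cdots+d\log\rho_r+\sum_{j=1}^k(a_j\,dy_{j,R}+b_j\,dy_{j,I})$ with $a_j,b_j\in\R$, as claimed.

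The step I expect to be the real obstacle --- and the one deserving a careful write-up --- is exactly this last passage from $T^*X_\R$ to $T^*(X,D)_\R$ over the point $z$: although $dx_{i,R}$ and $dx_{i,I}$ are perfectly good smooth $1$-forms on $X$, inside the logarithmic cotangent bundle they become degenerate along $D$ because $dx_i=x_i\,d\log x_i$ vanishes on $D_i$. It is precisely this degeneration that annihilates all the ``radial'' ($d\log\rho_i$) and ``angular'' ($d\theta_i$) contributions of the smooth factor $f$ at $z$, leaving only the $dy_j$-directions and producing the asserted normal form.
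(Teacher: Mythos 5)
Your argument is correct and matches the paper's proof: both split $\rho = f\cdot\rho_1\cdots\rho_r$ locally, take $d\log$, and then observe that $d\log f|_z$ has no component in the $d\log\rho_i$ or $d\theta_i$ directions. The paper states this last step without justification ("We can further verify..."), whereas you supply the verification by computing $dx_{i,R} = \rho_i(\cos\theta_i\, d\log\rho_i - \sin\theta_i\, d\theta_i)$ and $dx_{i,I} = \rho_i(\sin\theta_i\, d\log\rho_i + \cos\theta_i\, d\theta_i)$ in the log frame, so that the image of $T^*_zX_\R \to T^*_z(X,D)_\R$ is exactly the span of the $dy_{j,R}, dy_{j,I}$; this is a clean and useful expansion of the omitted step.
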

\begin{proof}
Locally $\rho = f |x_1| \cdots |x_r|$, for a positive smooth function $f$. Hence 
\[ {d \log \rho}|_z = d\log \rho_1 + \cdots + d\log \rho_1 +  {d \log f}|_z.  \]
We can further verify that $d \log f|_z$ has no component in $d \log \rho_i$ and $d\theta_i$, for $i=1, \cdots, r$. 

\end{proof}

\subsection{Log closure of conic Lagrangians}
Now we can give the alternative proof of Theorem \ref{thm:logind}. We claim that
\[[\overline{\SiS}(F)] \cdot [T^*_X(X, D)] =\Gamma_{-d \log \rho} \cdot \overline {\SiS}(F) = \Gamma_{-d \log \rho} \cdot SS(F)
= \chi(U, F) \]
where the first equality follows from perturbation of the zero-section, and the last equality follows from Theorem \ref{thm:Kasind}. The middle equality is proven in the following lemma. 

\begin{lemma}
Let $\Lambda \In T^*U$ be a real analytic conic Lagrangian, and $\overline \Lambda \In T^*(X, D)$ its closure. Then 
\[ \Gamma_{-d \log \rho} \cap \overline {\Lambda} = \Gamma_{-d \log \rho} \cap \  {\Lambda}. \]
\end{lemma}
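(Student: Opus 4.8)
The plan is to reduce the set equality to a local statement over $D$ and then exploit the sign information in Proposition \ref{dlogrho}. The inclusion $\Gamma_{-d\log\rho}\cap\Lambda\subseteq\Gamma_{-d\log\rho}\cap\overline\Lambda$ is trivial. For the reverse inclusion, since $\Lambda$ is closed in $T^*U$ (which holds for characteristic varieties, and which we may in any case assume), we have $\overline\Lambda\cap T^*U=\Lambda$, hence $\overline\Lambda\setminus\Lambda\subseteq\pi^{-1}(D)$. So it suffices to show that $\Gamma_{-d\log\rho}$ meets $\overline\Lambda$ only over $U$, i.e. that for every $z\in D$ the covector $-d\log\rho|_z$ does not lie in the fibre $\overline\Lambda_z:=\overline\Lambda\cap\pi^{-1}(z)$. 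This is a question local near $z$.

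First I would fix $z\in D$, let $D_1,\dots,D_r$ be the components through $z$, and choose coordinates as in \eqref{coord}. By Proposition \ref{dlogrho}, in the real logarithmic coframe the $d\log\rho_i$-component of $-d\log\rho|_z$ equals $-1$ for every $i=1,\dots,r$; in particular it is strictly negative. Writing $\tilde\xi_i$ for the complex coefficient of $d\log x_i$ on $T^*(X,D)$, this says $\mathrm{Re}\,\tilde\xi_i(-d\log\rho|_z)=-1<0$ for $i=1,\dots,r$, and this is the only property of $-d\log\rho|_z$ that I would use.

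Next I would analyse $\overline\Lambda_z$ by writing the conic Lagrangian $\Lambda$ as a locally finite union of conormal bundles $T^*_{Z_\alpha}U$ of locally closed real analytic submanifolds $Z_\alpha\subseteq U$ with real analytic closures $\overline{Z_\alpha}\subseteq X$, so that $\overline\Lambda=\bigcup_\alpha\overline{T^*_{Z_\alpha}U}$; it is then enough to treat one $\alpha$ with $z\in\overline{Z_\alpha}$. Rewriting $\sum\xi_i\,dx_i=\sum(x_i\xi_i)\,d\log x_i$ shows that the $d\log x_i$-coordinate of a covector in $T^*_{Z_\alpha}U$ carries a factor $x_i$. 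Concretely, along any real analytic curve $\gamma(t)\to z$ in $\overline{Z_\alpha}$ one has $x_i(\gamma(t))=c_it^{a_i}+\cdots$ with $a_i\ge1$ (the curve not being contained in $D_i$), and the conormality condition $\langle\xi,\dot\gamma(t)\rangle=0$, rewritten in the $\tilde\xi$-coordinates and passed to the limit $t\to0$, becomes a linear relation $\sum_{i=1}^r a_i\,\tilde\xi_i+(\text{terms in the other coordinates})=0$ with all $a_i\ge1$. No such relation — with non-negative coefficients not all zero — admits a solution with $\mathrm{Re}\,\tilde\xi_i=-1$ for every $i$, since taking real parts forces $\sum a_i(-1)=-\sum a_i<0$. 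Running over all branches through $z$ and all $\alpha$ gives $-d\log\rho|_z\notin\overline\Lambda_z$, which is the lemma.

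The hard part will be the structural claim of the previous paragraph: that over the stratum where $D_1,\dots,D_r$ meet, the $d\log x_i$-coordinates of $\overline\Lambda$ satisfy balancing relations with non-negative coefficients and therefore avoid the ``outward'' direction $(\mathrm{Re}\,\tilde\xi_1,\dots,\mathrm{Re}\,\tilde\xi_r)=(-1,\dots,-1)$. For $Z_\alpha$ smooth with $\overline{Z_\alpha}$ in normal crossing position with $D$ this is a one-line computation; the work lies in reducing the general singular, tangential, multi-branch situation to this by an embedded resolution of the pair $(\overline{Z_\alpha},D)$, and in checking that every vanishing order that appears there is $\ge1$ so that positivity really does rule out the all-negative covector (and in handling branches of $\overline{Z_\alpha}$ that lie inside $D$, and the mixing of the $\tilde\xi_i$-relations with the $dy_j$-directions). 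An alternative, more intrinsic route would be to use that $\overline\Lambda$ is conic and isotropic for the logarithmic symplectic form on $T^*(X,D)$, so that the logarithmic Liouville form vanishes on it, and to extract the same sign constraint on $\overline\Lambda\cap\pi^{-1}(D_1\cap\cdots\cap D_r)$ from this together with conicity.
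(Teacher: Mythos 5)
Your overall strategy — reduce to showing $-d\log\rho|_z\notin\overline\Lambda$ over each $z\in D$, then exploit that the real parts of the $d\log x_i$-coordinates of $-d\log\rho|_z$ are all $-1$ and that any limiting covector in $\overline\Lambda$ must satisfy a nonnegative-coefficient balancing relation forced by conormality — captures the same sign-analysis that drives the paper's proof. But you leave the crux as a ``hard part'' requiring embedded resolution of $(\overline{Z_\alpha},D)$, and this is where the proposal falls short of a proof: you do not actually carry out the reduction, and the route you gesture at is substantially harder than what is needed.

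The paper's argument is cleaner and does not require decomposing $\Lambda$ into conormal bundles or resolving any singularities. It applies the curve selection lemma directly in $\overline\Lambda\subset T^*(X,D)$ (not in the bases $\overline{Z_\alpha}$), producing a real analytic curve $\gamma:[0,1)\to T^*(X,D)$ with $\gamma(0)=q\in\Gamma_{-d\log\rho}\cap(\overline\Lambda\setminus\Lambda)$ and $\gamma((0,1))\subset\Lambda$. Because $\Lambda$ is a conic Lagrangian in $T^*U$, the Liouville form $\lambda$ of $T^*(X,D)$ (which restricts over $U$ to that of $T^*U$) vanishes on $T\Lambda$, hence $\int_{\gamma(0,\epsilon)}\lambda=0$. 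Expanding $\lambda$ in the log frame and using Proposition \ref{dlogrho} to pin down the limits $\eta_{\rho_i}\to-1$, $\xi_{\theta_i}\to0$, the $\sum\eta_{\rho_i}\,d\rho_i/\rho_i$ part of the integral diverges to $-\infty$ while the remaining terms stay bounded — a contradiction. This is precisely the ``intrinsic route'' you mention at the end, except the paper does \emph{not} claim that $\overline\Lambda$ is isotropic for the log symplectic form (which is not immediate and is unnecessary); it only uses vanishing of $\lambda$ on $T\Lambda$ over $U$, plus divergence of the residual integral.

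Two further remarks. First, your instinct that the balancing relation is the key is correct, but the resolution detour is not needed: once you apply curve selection to $\overline\Lambda$ itself and project, $x_i(\pi\gamma(t))=c_it^{a_i}(1+O(t))$ with $a_i\ge1$ comes for free (the base point converges to $z\in D_i$), so multiplying the conormality identity $\langle\xi(t),\tfrac{d}{dt}\pi\gamma(t)\rangle=0$ by $t$ and passing to the limit already gives $\sum a_i\,\tilde\xi_i(0)=0$ with $a_i\ge1$, and the contradiction with $\mathrm{Re}\,\tilde\xi_i(0)=-1$ follows. Second, if you do insist on stratifying, the curve selection must still be performed in $\overline\Lambda$ (not in $\overline{Z_\alpha}$), since you need the covector component of the approach to $q$ to converge to $-d\log\rho|_z$; a curve chosen only in the base does not specify the covectors.
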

\begin{proof}
We prove this by contradiction. Suppose the intersection $\Gamma_{-d \log \rho} \cap ( \overline {\Lambda} \backslash \Lambda)$ is non-empty, and contains a point $q$ that lies over $z \in D$. Choose a coordinate patch around $z$ as in \eqref{coord}.
 
The log cotangent bundle $T^*(X, D)$ has fiber coordinates $\eta_{\rho_i}, \xi_{\theta_i}, \xi_{y,j,R}, \xi_{y,j,I}$, in which the  Liouville 1-form  can be written as 
\[ \lambda = \sum_{i=1}^r  \left( \eta_{\rho_i} d \log \rho_i + \xi_{\theta_i} d \theta_i \right) + \sum_{j=1}^k \left(\xi_{y,j,R} d y_{j,R} + \xi_{y,j,I} d y_{j,I}\right) \]
Note that since $\La$ is a conic Lagrangian,  $\lambda$ vanishes on $T\La$.  

Using the curve selection lemma, here exists a real analytic curve $\gamma: [0,1) \to T^*(X, D)$, such that $\gamma(0)=q$ and $\gamma(0,1) \In \Lambda$.   We may write $\gamma(t)$ in coordinates of $T^*(X, D)$ as $\rho_i(t),\theta_i(t), \cdots$ and $\eta_{\rho_i}(t), \xi_{\theta_i}(t), \cdots $. 

We consider the line integral  
\[ \int_{\gamma(0,\epsilon)} \lambda, \quad \epsilon \ll 1. \]
On  one hand, the integral is zero, since $\lambda$ vanishes on $d{\gamma(t)}/dt \in T\Lambda$. On the other hand, if we write in $\lambda$ in component, we have 
\begin{align*}
      \int_{\gamma(0,\epsilon)} \lambda 
     &= \int_{0}^\epsilon \sum_{i=1}^r \left( \eta_{\rho_i}(t) \frac{d \rho_i(t)}{\rho_i(t)} + \xi_{\theta_i}(t) d \theta_i(t) \right) \\
     & + \sum_{j=1}^k \left(\xi_{y,j,R}(t) d y_{j,R}(t) + \xi_{y,j,I}(t) d y_{j,I}(t)\right) \\
\end{align*}
As $t \to 0$, by Proposition \ref{dlogrho},  we have 
\[ \lim \rho_i(t)=0,  \quad \lim \eta_{\rho_i}(t) = -1, \quad \lim \xi_{\theta_i}(t) = 0,  \]
and all other limits for $\theta_i,  y_{j,I},y_{j,R}, \xi_{y,j,R}, \xi_{y,j,I} $ exist. The integral for the $d \rho / \rho$ part gives 
\[ \int_{0}^\epsilon \sum_{i=1}^r  \eta_{\rho_i}(t) \frac{d \rho_i(t)}{\rho_i(t)} \sim - \int_{0}^\epsilon \sum_{i=1}^r   \frac{d \rho_i(t)}{\rho_i(t)} \sim -\infty,\]
 whereas the other terms in the integral are bounded. Hence $\int_{\gamma(0,\epsilon)} \lambda = -\infty$ and we have a contradiction. This proves the lemma. 
\end{proof}

\bibliographystyle{amsalpha}
\bibliography{mybib}
\end{document}